\pgfplotsset{compat=newest}
\renewcommand{\em}{\it}
\newtheorem{thm}{Theorem}[section]
\newtheorem{prop}[thm]{Proposition}
\newtheorem{lemma}[thm]{Lemma}
\theoremstyle{definition}
\newtheorem{example}[thm]{Example}
\newtheorem{remark}[thm]{Remark}
\newcommand{\cD}{\mathcal{D}}
\renewcommand{\S}{\mathbb{S}}
\newcommand{\R}{\mathbb{R}}
\newcommand{\N}{\mathbb{N}}
\renewcommand{\phi}{\varphi}
\renewcommand{\epsilon}{\varepsilon}
\newcommand{\tX}{\widetilde{X}}
\newcommand{\tv}{\tilde{v}}
\newcommand{\tg}{\tilde{g}}
\newcommand{\tp}{\tilde{p}}
\newcommand{\tbeta}{\tilde{\beta}}
\newcommand{\tgamma}{\tilde{\gamma}}
\newcommand{\tlambda}{\tilde{\lambda}}
\newcommand{\teta}{\tilde{\eta}}
\newcommand{\tB}{\widetilde{B}}
\newcommand{\tP}{\widetilde{P}}
\newcommand{\tU}{\widetilde{U}}
\newcommand{\tC}{\widetilde{C}}
\newcommand{\tG}{\widetilde{G}}
\newcommand{\tA}{\widetilde{A}}
\renewcommand{\tt}{\tilde{t}}
\newcommand{\hlambda}{\hat{\lambda}}
\newcommand{\hw}{\widehat{w}}
\newcommand{\hM}{\widehat{M}}
\newcommand{\hU}{\widehat{U}}
\newcommand{\vecbf}[1]{\mbox{\boldmath $#1$}}
\newcommand{\vo}{\vecbf{0}}
\newcommand{\rank}{\mathop\mathrm{rank}}
\newcommand{\tr}{\mathop\mathrm{tr}}
\title[Analytic Formulas for Alternating Projection Sequences]{Analytic Formulas for Alternating Projection Sequences for the Positive Semidefinite Cone and an Application to Convergence Analysis}
\author[H. Ochiai]{Hiroyuki Ochiai}
\address[H. Ochiai, H. Waki]{Institute of Mathematics for Industry, Kyushu University, 744 Motooka, Nishi-ku, Fukuoka 819-0395, Japan}
\email{ochiai@imi.kyushu-u.ac.jp, waki@imi.kyushu-u.ac.jp}
\author[Y. Sekiguchi]{Yoshiyuki Sekiguchi}
\address[Y. Sekiguchi]{Graduate School of Marine Science and Technology, Etchujima 2-1-8, Koto-ku, Tokyo 135-8533, Japan}
\email[Corresponding author]{yoshi-s@kaiyodai.ac.jp}
\author[H. Waki]{Hayato Waki}
\subjclass[2010]{Primary 90C25, 41A25; Secondary 65K10}
\keywords{Alternating projection method, Positive semidefinite cone, Nontransversal intersection, Determinantal variety, Grassmannian, Exact convergence rate}
\begin{document}

\begin{abstract}
We derive analytic formulas for the alternating projection method applied to the cone $\S^n_+$ of positive semidefinite matrices and an affine subspace.
More precisely, we find recursive relations on parameters representing a sequence constructed by the alternating projection method.
By applying these formulas, we analyze the alternating projection method in detail
and show that the upper bound given by the singularity degree is actually tight
when the alternating projection method is applied to $\S^3_+$ and a $3$-plane 
whose intersection is a singleton with singularity degree $2$.
\end{abstract}

\maketitle


\section{Introduction}

\subsection{The alternating projection method}

Let $\S^n, \S^n_+$ be the sets of $n\times n$ symmetric matrices and positive semidefinite matrices, respectively.
For an affine subspace $E$ of $\S^n$, $E\cap \S^n_+$ represents the feasible region of a semidefinite programming problem. 
Thus, it is important to find a point in $E\cap \S^n_+$ 
in numerous applications across a wide range of areas \cite{C1996, D, MTW}. 
The alternating projection method for $E$ and $\S^n_+$ constructs a sequence $\{U_k\}$ with $U_0\in E$ by
\[
 U_{k+1} = P_E\circ P_{\S^n_+}(U_k),
\]
where $P_E$ and $P_{\S^n_+}$ are the projections onto $E$ and $\S^n_+$, respectively.
We call $\{U_k\}$ an \textit{AP sequence} for short.
It is known that $\{U_k\}$ converges to a point in $E\cap \S^n_+$ if $E\cap \S^n_+$ is nonempty, or to a point with displacement if $E\cap \S^n_+$ is empty; see, e.g., \cite{BB1996}, \cite{DLW2017} and the references therein. 

The behavior of an AP sequence has been analyzed in most  studies using inequalities related to the projections, and only upper bounds for the convergence rate are given. 
In particular, \cite{DLW2017} showed that an upper bound is given by
the singularity degree of $E\cap \S^n_+$.
However, as discussed in an open question proposed in \cite{BLY2014}, 
known upper bounds might not be tight and thus it would be interesting to find a tight upper bound.
In fact, we construct an affine subspace $E$ in Example $\ref{ex:half}$, where the singularity degree of $E\cap \S^3_+$ is $2$ and the tight upper bound for the convergence rate of the AP sequence is $O(k^{-1/2})$, 
although the upper bound given by the singularity degree is $O(k^{-1/6})$.
In examples in Section $\ref{sec:eigen}$ and \cite[Example 5.2, 5.4, 5.6]{BLY2014}, the gaps between known upper bounds and the actual convergence rates are found by directly analyzing defining equations for AP sequences, instead of the inequalities related to the projections.

The purpose of this paper is to shed new light on the recursive relation defining AP sequences for $\S^n_+$ and an affine subspace with the aim of convergence analysis.
It is observed that AP sequence for $E$ and $\S^n_+$ is defined with simple projections
and it can be parameterized with respect to a basis for $E-U_*$ for $U_*\in E\cap \S^n_+$.
Thus we obtain a parametric representation for the projections that is suitable for direct calculation in the convergence analysis.


\subsection{Contributions}
It is usually a hard problem to obtain an exact convergence rate for a sequence generated by an iterative method for an optimization problem. 
We need to find a recursive relation that is explicit and appropriate for detailed computations.
For this purpose, we mainly consider the case where $E\cap \S^n_+$ is a singleton as assumed in examples in \cite{BLY2014}, to obtain exact convergence rates. 
For a non-singleton case, we present only Example $\ref{ex:posdim}$ as an application of the first formula to a local analysis around a point in the intersection. 
It is future work to investigate the case where $E\cap \S^n_+$ is not a singleton in detail.
Under this assumption we obtain the following formulas.
After discussing the first formula, we concentrate on the case of $\S^3_+$. 
In low dimensions, we can analyze the alternating projection method in significantly more detail than in higher dimensions, thereby deepening our understanding of the method.
We hope that these results also grasp 
 the essential nature of a higher dimensional problem that is generic.

\subsubsection*{Eigenvalue formula}
For a general affine subspace $E$ and $\S^n_+$, we obtain the first analytic formula for the parameters for an AP sequence by using eigenvalues (Proposition $\ref{prop:formula_eigen}$).
In general, the eigenvalues of a parametric matrix is not readily available, and hence
this formula is not easily applied to convergence analysis. However, in some special cases, the formula has useful applications, such as constructing interesting examples (Example $\ref{ex:half}, $\ref{ex:posdim}$, \ref{ex:linear}$), and estimating convergence rates when $E$ is a line \cite{OSW2024-2}.

\subsubsection*{Analytic formula when $P_{\S^3_+}(U_k)$ is rank $1$}
We consider $\S^3_+$ and a $3$-plane $E$ for 
 simplicity. 
By numerical experiments, we see that $P_{\S^3_+}(U_k)$ is often rank $1$, and this case appears to be crucial for the convergence analysis in Section $\ref{sec:rate}$.
Thus, we additionally assume that $P_{\S^3_+}(U_k)$ is rank $1$.
Then we obtain the second analytic formula (Theorem $\ref{thm:analytic_det}$).
This formula allows us to construct a curve such that 
an AP sequence converges most slowly if the initial point is taken from the curve. 
Finding such an initial point is crucial for showing the tightness of an upper bound.

\subsubsection*{Rational formula}
We find a parameterization of the family of $3$-planes $E$ such that $E\cap \S^3_+$ is a singleton (Proposition $\ref{prop:3-planes})$. Then the set of such planes with singularity degree $2$ is fully characterized. With this characterization, we find a rational formula (the matrix $(\ref{eq:slowest})$) for the curve  giving the slowest convergence rate.
By using this formula, 
we obtain explicit expressions of $P_{\S^3_+}(U_k)$ (Theorem $\ref{thm:ratPS}$) and $P_E\circ P_{\S^3_+}(U_k)$ (Theorem $\ref{thm:ratPEPS}$), and then 
we show that the upper bound given by 
 the singularity degree is actually tight (Theorem $\ref{thm:rate})$.

\subsection{Organization}
Section $\ref{sec:prelim}$ provides the basic notation. 
The first analytic formula is obtained in Section $\ref{sec:eigen}$. 
Section $\ref{sec:analytic_det}$ contains the second analytic formula. 
In Section $\ref{sec:3-planes}$, we parameterize the set of $3$-planes whose intersections with $\S^3_+$ are a singleton. 
The rational formula for the curve giving the slowest convergence rate is given in Section $\ref{sec:rat}$. 
Section $\ref{sec:rate}$ deals with the case where the upper bound given by the singularity degree is tight.

\section{Preliminaries}
\label{sec:prelim}

Let $[n] = \{1,\ldots,n\}$, $\langle A,B\rangle = \tr A^TB = \sum_{i,j=1}^n A_{ij}B_{ij}$,
$\|A\|_F = \sqrt{\langle A,A\rangle}$ and $\|A\|_2$ be the spectral norm of $A$.
If there is no confusion, we simply use $\|A\|$ to mean $\|A\|_F$.
The distance $d(A,E)$ from a matrix $A\in \S^n$ to a set $E\subset \S^n$ is 
defined by $d(A,E) = \inf_{X\in E}\|A - X\|_F$.
If $E$ is a closed convex subset of $\S^n$, then 
there exists a unique optimal solution to $\min_{X\in E}\|A - X\|_F$, and 
the optimal solution is called the projection of $A$ onto $E$ and denoted by $P_E(A)$.

For $f, g:\R \to \R$, we write $f(x) = O(g(x))$ as $x\to \infty$
if there exist $C, M>0$ such that
$|f(x)| \leq C g(x)$ for all $x$ with $|x| > M$.
We also write $f(x) = \Theta(g(x))$ as $x\to \infty$
if there exist $C_1, C_2 > 0$ such that
$C_1 g(x) \leq f(x) \leq C_2 g(x)$ for all $x$ with $|x| > M$.
The meaning of the statement $f(x) = O(g(x))$ as $x\to 0$ is defined similarly.
If there is no ambiguity, we simply write $f(x) = O(g(x))$, or $f(x) = \Theta(g(x))$.
For $F:\R \to \R^n$, we also write $F(x) = O(g(x))$ if $F_i(x) = O(g(x))$ for $i\in [n]$. Similarly, $F(x) = \Theta(g(x))$ is defined. 


\section{Analytic formula for a general case}
\label{sec:eigen}
\subsection{Eigenvalue formula}
Suppose that $E$ is an affine subspace of $\S^n$ 
and $U_*\in E\cap \S^n_+$.
Let $B_1,\ldots, B_m$ be an orthogonal basis for $E-U_*:=\{U-U_*\in \S^n: U\in E\}$, and define
\[
\phi_0(p) = \sum_{i =1}^m p_i B_i,\  \phi(p) = U_* + \phi_0(p).
\]  
The following proposition gives the first analytic formula for the alternating projections $P_E\circ P_{\S^n_+}$.
Note that $\phi:\R^m\to E$ is bijective and thus $\phi$ has the inverse map $\phi^{-1}$.
\begin{prop}
\label{prop:formula_eigen}
Let $\tp = \phi^{-1}\circ P_E\circ P_{\S^n_+} \circ \phi(p)$ and $\lambda_1(p), \ldots, \lambda_n(p)$ be eigenvalues of $\phi(p)$. Then we have
 \[
 \tp_i = p_i - \frac{1}{\|B_i\|^2}\frac{\partial}{\partial p_i}
 \sum_{\ell \in n(p)}\frac{1}{2}\lambda_\ell^2(p),\quad i\in [m],
 \]
where $n(p) = \{\ell \in [n]:\lambda_\ell(p) < 0\}$.
\end{prop}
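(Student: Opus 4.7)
The plan is to split $P_E \circ P_{\S^n_+}$ into two manageable pieces. Writing $E = U_* + V$ with $V = \Span(B_1,\ldots,B_m)$, the fact that $\phi(p) \in E$ gives
\[
P_E \circ P_{\S^n_+}(\phi(p)) = \phi(p) + P_V\bigl(P_{\S^n_+}(\phi(p)) - \phi(p)\bigr),
\]
since $P_E(Y) = U_* + P_V(Y - U_*)$ and $\phi(p) - U_* = \phi_0(p) \in V$. Because $\{B_i\}$ is an orthogonal basis for $V$, taking the $B_i$-coordinate of both sides immediately produces
\[
\tp_i - p_i = \frac{1}{\|B_i\|^2}\bigl\langle B_i,\, P_{\S^n_+}(\phi(p)) - \phi(p)\bigr\rangle.
\]

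The second step is to identify this inner product with the partial derivative on the right hand side of the claimed formula. The spectral description of the PSD projection gives $\phi(p) - P_{\S^n_+}(\phi(p)) = \sum_{\ell \in n(p)} \lambda_\ell(p) v_\ell v_\ell^T$, whose squared Frobenius norm is exactly $\sum_{\ell \in n(p)} \lambda_\ell^2(p)$, so
\[
\sum_{\ell \in n(p)} \tfrac{1}{2}\lambda_\ell^2(p) = \tfrac{1}{2}\dist(\phi(p), \S^n_+)^2.
\]
I then invoke the standard Moreau envelope fact that $X \mapsto \tfrac{1}{2}\dist(X, \S^n_+)^2$ is of class $C^1$ on $\S^n$ with gradient $X - P_{\S^n_+}(X)$. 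Composing with the affine map $\phi$ and applying the chain rule yields
\[
\frac{\partial}{\partial p_i}\sum_{\ell \in n(p)} \tfrac{1}{2}\lambda_\ell^2(p) = \bigl\langle B_i,\, \phi(p) - P_{\S^n_+}(\phi(p))\bigr\rangle.
\]
Substituting this into the coordinate expression for $\tp_i - p_i$ gives the formula.

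The main obstacle I anticipate is that the individual eigenvalue functions $\lambda_\ell(p)$ need not be globally smooth in $p$ (eigenvalues may cross, and the index set $n(p)$ is not locally constant), which forbids differentiating the sum term by term. The route above sidesteps this difficulty by routing the differentiation through $\tfrac{1}{2}\dist(\cdot, \S^n_+)^2$, which is globally $C^1$ because $\S^n_+$ is a closed convex set. Since the multiset of eigenvalues of $\phi(p)$ is intrinsically defined, $\sum_{\ell \in n(p)} \lambda_\ell^2(p)$ is an unambiguous function of $p$, and by orthogonality of the spectral projectors it equals the squared distance to $\S^n_+$; this is the only place where some care is required.
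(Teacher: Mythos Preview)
Your argument is correct and follows essentially the same route as the paper: both proofs use that $X\mapsto \tfrac{1}{2}\dist(X,\S^n_+)^2$ is $C^1$ with gradient $X-P_{\S^n_+}(X)$, identify this squared distance with $\sum_{\ell\in n(p)}\lambda_\ell^2(p)$ via the spectral decomposition, and then read off the $B_i$-coordinates of $P_E\circ P_{\S^n_+}(\phi(p))$ using the orthogonal basis together with the chain rule. Your added remark about avoiding termwise differentiation of the eigenvalues is a nice clarification, but the underlying proof is the same as the paper's.
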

\begin{proof}
If $\phi(p)\in \S^n_+$, then $n(p)=\emptyset$ and hence the formula obviously holds.
Thus we assume $\phi(p)\notin \S^n_+$.
 Let $U = \phi(p)$, $V = P_{\S^n_+}(U)$ and $\tU = P_E(V)$.
By \cite[Thm 4.8]{F}, we have that $d^2(U, \S^n_+)$ is continuously differentiable and 
\[
 \nabla \frac{1}{2}d^2(U, \S_+^n) = U - P_{\S_+^n}(U),
\]
where $\nabla$ corresponds to differentiation with respect to each component of $U$.
 Since $P_E$ is the orthogonal projection onto $E$, we can easily show that
\[
  P_E(V) = U_* + \sum_i\frac{\langle B_i,V - U_*\rangle}{\|B_i\|^2}B_i
  \\
 = \sum_i\frac{\langle B_i,V \rangle}{\|B_i\|^2}B_i - \sum_i\frac{\langle B_i,U_* \rangle}{\|B_i\|^2}B_i + U_*.
\]
Thus we obtain
\begin{align*}
 \tU & = P_E(V) = P_E\left(U - \nabla \frac{1}{2}d^2(U, \S_+^n)\right)\\
&  = \sum_i\frac{\langle B_i,U - \nabla \frac{1}{2}d^2(U, \S_+^n) \rangle}{\|B_i\|^2}B_i
 - \sum_i\frac{\langle B_i,U_* \rangle}{\|B_i\|^2}B_i + U_*\\
& = P_E(U) - \sum_i\frac{\langle B_i,\nabla \frac{1}{2}d^2(U, \S_+^n)\rangle}{\|B_i\|^2}B_i
 = U - \sum_i\frac{1}{2\|B_i\|^2}\frac{\partial}{\partial p_i}d^2(\phi(p), \S_+^n)B_i.
\end{align*}
Note that the last equality follows the chain rule
\[
\frac{\partial}{\partial p_i}d^2(\phi(p), \S_+^n)
=\frac{\partial}{\partial p_i}d^2(U_* + \textstyle{\sum_{j}p_kB_k}, \S_+^n)
=\langle \nabla d^2(U, \S_+^n), B_i\rangle.
\]
Here, we see that
\[
 d^2(\phi(p), \S^n_+) = 
\|\phi(p) - P_{\S^n_+}(\phi(p))\|^2
= \sum_{\ell \in n(p)}\lambda_\ell^2(p),
\]
and thus we have
$
\tU = U - \sum_i \frac{1}{2\|B_i\|^2}\frac{\partial}{\partial p_i}\sum_{\ell \in n(p)}\lambda_\ell^2(p)B_i.
$
Since $\tU = \phi(\tp) = U_* + \sum_i \tp_iB_i$
and $U = \phi(p) = U_* + \sum_i p_iB_i$, 
we obtain the desired equality by comparing the coefficients
of $B_i$ on the both side.
\end{proof}

\subsection{Applications of the eigenvalue formula}
Since the computation of the eigenvalues of a parameterized matrix is usually difficult,
this formula is not so useful to analyze a general AP sequence.
However, this formula can be used to investigate a simpler case \cite{OSW2024-2}, or
to construct examples with interesting properties as below.

\begin{example}[Known upper bounds and actual convergence rates]
\label{ex:half}
It is well known that an upper bound of the convergence rate of alternating projections for
an affine subspace and $\S^n_+$
 is given using the singularity degree; \cite{DLW2017}. The \textit{singularity degree} is a nonnegative integer determined by the iterative process called \textit{facial reduction}. For the detail; see, e.g. \cite{BW, DW, P}. 
 We note that the singularity degree of the intersection of an affine subspace 
  and $\S^n_+$ is less than or equal to $n-1$.
  
Consider
 \begin{align*}
  E = \Big\{U \in \S^3:
 \langle 
 \left(
 \begin{smallmatrix}
 1 & 0 & 0 \\
 0  & 0 & 0 \\
 0  & 0 & 0
 \end{smallmatrix}
 \right), U
 \rangle = 1,\ 
 & \langle
 \left(
 \begin{smallmatrix}
 0  & 0 & 1 \\
 0  & 1 & 0 \\
 1  & 0 & 0
 \end{smallmatrix}
 \right), U
 \rangle = 0,\ 
 \langle
 \left(
 \begin{smallmatrix}
 0  & 1 & 0 \\
 1  & 0 & 0 \\
 0  & 0 & 0
 \end{smallmatrix}
 \right), U
 \rangle = 0,\\
 &  
 \langle
 \left(
 \begin{smallmatrix}
 0  &0  &0  \\
 0  &0  & 1 \\
 0  & 1 & 0
 \end{smallmatrix}
 \right), U
 \rangle = 0,\ 
 \langle
 \left(
 \begin{smallmatrix}
 0  & 0 & 0 \\
 0  & 0 & 0 \\
 0  & 0 & 1
 \end{smallmatrix}
 \right), U
 \rangle = 0
 \Big\}.
 \end{align*}
 Then a matrix in $E$ can be written as 
 \[
 U(t) := 
 \left(
 \begin{smallmatrix}
 1 & 0 & 0 \\
 0 & 0 & 0 \\
 0 & 0 & 0 
 \end{smallmatrix}
 \right)
 + t
 \left(
 \begin{smallmatrix}
 0 & 0 & -1\\
 0 & 2 & 0 \\
 -1 & 0 & 0 
 \end{smallmatrix}
 \right)
 \]
 and hence $E\cap \S^3_+=\{U(0)\}$.
Using the definition of singularity degree in \cite{DW},
the sigularity degree of $E\cap \S^3_+$ is $2$.
By the bound based on the singularity degree given in \cite[Theorem 2.4]{DLW2017}, an upper bound for the convergence rate of an AP sequence for $E$ and $\S^3_+$ is $O(k^{-\frac{1}{6}})$.  
However, the formula in Proposition \ref{prop:formula_eigen} ensures that the tight upper bound for the convergence rate is $O(k^{-1/2})$ as follows.

The eigenvalues of $U(t)$ is 
$\lambda_1(t) = 2t,\ 
 \lambda_2(t) = 1+t^2 - t^4 + O(t^6),\ 
 \lambda_3(t) = -t^2 + t^4 + O(t^6)$.
 Let $U(t_{k+1}) = P_E\circ P_{\S^3_+}(U(t_k))$.  If $t_0>0$ sufficiently small,
 the formula in Proposition $\ref{prop:formula_eigen}$ gives that
 \[
 t_{k+1} = t_k - \frac{1}{6}\frac{d}{dt}\frac{1}{2}\lambda_3^2(t_k)
 = t_k - \frac{1}{3}t_k^3 + O(t_k^4).
 \]
 Then \cite[Lemma 5.2]{OSW2024-2} implies that $t_k\to 0$ with $t_k>0$ and $t_k = \Theta(k^{-1/2})$.
 More precisely, $t_k \approx (3/2)^{1/2}k^{-1/2}$; see also \cite{OSW}.
 If $t_0<0$ sufficiently close to $0$, then 
 \[
 t_{k+1} = t_k - \frac{1}{6}\frac{d}{dt}\frac{1}{2}(\lambda_1^2(t_k) + \lambda_3^2(t_k))
 = t_k - \frac{2}{3}t_k - \frac{1}{3}t^3_k + O(t_k^4)
 = \frac{1}{3}t_k + O(t_k^3).
 \]
Thus $\frac{2}{3}t_k < t_{k+1} < 0$.
Hence $t_k\to 0$ with $t_k<0$ and  $t_k$ converges linearly. 
Combining the two cases, 
we see that $\|U(t_k) - U_*\| = \sqrt{6}|t_k| \leq O(k^{-1/2})$ for an arbitrary initial point.
Figure $\ref{fig:half}$ illustrates these 
 rates of convergence.
In the case that $t_0>0$, we observe from Figure $\ref{fig:half2}$ that the plot of 
$1/\|U_k - U_*\|^2$ approximately coincides with the line $33.51+0.111k$.
Hence $\|U_k - U_*\|\approx (33.51+0.111k)^{-1/2}\approx 3k^{-1/2}$. This is consistent with our estimate $\|U(t_k) - U_*\| = \sqrt{6}t_k \approx 3k^{-1/2}$.
General cases are investigated in \cite[Section 5]{OSW2024-2}.

\begin{figure}[ht]
\begin{tabular}{cc}
        \begin{minipage}{.5\textwidth}
            \centering
\begin{tikzpicture}[scale=0.73]

\begin{axis}[grid=major, xlabel={$k$}, legend entries={$\sqrt{k}\|U_k-U_*\|$, $\sqrt[6]{k}\|U_k-U_*\|$}, legend style={at={(axis cs:50000,2)}}]
\addplot[red] table [x=k, y=1norm2,] {data32_p.dat};
\addplot[blue] table [x=k, y=1norm6,] {data32_p.dat};
\end{axis}

\end{tikzpicture}
        \end{minipage}
        \begin{minipage}{.5\textwidth}
\centering
\begin{tikzpicture}[scale=0.73]
\begin{axis}[grid=major, xlabel={$k$},ymode=log, legend entries={$\|U_{k}-U_*\|$, $0.070\times (0.3333)^k$}, legend style={at={(axis cs:19.5,10e-13)}}]
\addplot[red] table [x=k, y=norm] {data32_m.dat};
\addplot[blue, very thick, dotted] table [x=k, y=Ack] {data32_m.dat};
\end{axis}
\end{tikzpicture}
        \end{minipage}                
    \end{tabular}      
\caption{The left figure displays a plot of $\|U_k - U_*\|$ with $t_0>0$ and the right figure displays a plot with $t_0<0$ in Example $\ref{ex:half}$, and the line fitting for the plot in the right figure.} 
\label{fig:half}  
\end{figure}

\begin{figure}[ht]
            \centering
\begin{tikzpicture}[scale=0.73]
\begin{axis}[grid=major, xlabel={$k$}, legend entries={$1/\|U_{k}-U_*\|^2$, $33.51 +0.1111k$}, legend style={at={(axis cs:7000,200)}}, 
]
\addplot[red] table [x=k, y=1norm2,] {data32_p2.dat};
\addplot[blue, very thick, dotted] table [x=k, y=e+c*k,] {data32_p2.dat};
\end{axis}
\end{tikzpicture}
\caption{Plot of $1/\|U_k-U_*\|^2$ with $t_0 > 0$ in Example 3.2 and the line fitting.}
\label{fig:half2}  
\end{figure}

\end{example}

\begin{example}[Positive dimensional intersection]
\label{ex:posdim}
Proposition $\ref{prop:formula_eigen}$ can be used in the case that an intersection has a positive dimension.
Consider
 \begin{align*}
  E = \Big\{U \in \S^3:
 \langle 
 \left(
 \begin{smallmatrix}
 1 & 0 & 0 \\
 0  & 1 & 0 \\
 0  & 0 & 0
 \end{smallmatrix}
 \right), U
 \rangle = 1,\ 
 & \langle
 \left(
 \begin{smallmatrix}
 0  & 0 & 0 \\
 0  & 1 & 0 \\
 0  & 0 & -1
 \end{smallmatrix}
 \right), U
 \rangle = 0,\ 
 \langle
 \left(
 \begin{smallmatrix}
 0  & 0 & 0 \\
 0  & 1 & -1 \\
 0  & -1 & 1
 \end{smallmatrix}
 \right), U
 \rangle = 0,\\
 &  
 \langle
 \left(
 \begin{smallmatrix}
 0  &1  &0  \\
 1  &0  & 0 \\
 0  & 0 & 0
 \end{smallmatrix}
 \right), U
 \rangle = 0,\ 
 \langle
 \left(
 \begin{smallmatrix}
 0  & 0 & 1 \\
 0  & 0 & 0 \\
 1  & 0 & 0
 \end{smallmatrix}
 \right), U
 \rangle = 0
 \Big\}.
 \end{align*}
Then a matrix in $E$ can be written as
\[
U(t)=
\left(
\begin{smallmatrix}
1 & 0 & 0 \\
0 & 0 & 0 \\
0 & 0 & 0
\end{smallmatrix}
\right)
+ t
\left(
\begin{smallmatrix}
-1 & 0 & 0 \\
0 & 1 & 1 \\
0 & 1 & 1
\end{smallmatrix}
\right),
\]
and hence $E\cap \S^3_+ = \{U(t): 0 \leq t \leq 1\}$.
The singularity degree of $E\cap \S^3_+$ is $2$.
Now the eigenvalues of $U(t)$ are $\lambda_1(t) = 1 - t,\ \lambda_2(t) = 2t,\ \lambda_3(t) = 0$.
Consider an AP sequence $U(t_{k+1}) = P_E\circ P_{\S^3_+}(U(t_k))$.

If we take the initial point as $U(t_0)$ with $t_0<0$, then the AP sequence is expected to converge to $U(0)$.
By using the formula in Proposition $\ref{prop:formula_eigen}$ with $U_* = U(0)$, we have
\[
t_{k+1} = t_k - \frac{1}{5}\frac{d}{dt}\frac{1}{2}\lambda_2^2(t_k) = t_k - \frac{4}{5}t_k = \frac{1}{5}t_k.
\]
Thus $t_k\to 0$ with $t_k<0$, and in fact $U(t_k)\to U(0)$ linearly.

If  we take the initial point as $U(t_0)$ with $t_0>1$, then the AP sequence is expected to converge to $U(1)$.
To use the formula in Proposition $\ref{prop:formula_eigen}$ with $U_* = U(1)$,
we define $\hU(s) = U(s+1)$ and $s_k = t_k+1$. Then the eigenvalues of $\hU(s)$ are $\hlambda_1(s) = -s,\ \hlambda_2(s) = 2(s+1),\ \hlambda_3(s) = 0$
By Proposition $\ref{prop:formula_eigen}$, we have
\[
s_{k+1} = s_k - \frac{1}{5}\frac{d}{ds}\frac{1}{2}\hlambda_1^2(s_k) = s_k - \frac{1}{5}s_k = \frac{4}{5}s_k.
\]
Thus $s_k\to 0$ with $s_k>0$ and hence $U(s_{k+1}+1)\to U(1)$ linearly. Therefore, the actual convergence rate of an AP sequence is linear.
Figure $\ref{fig:posdim}$ is consistent with our estimates that the convergence rates are $O((4/5)^k)$ and $O((1/5)^k)$, respectively.
\begin{figure}[ht]
\begin{tabular}{cc}
        \begin{minipage}{.45\textwidth}
            \centering
\begin{tikzpicture}[scale=0.7]
\begin{axis}[grid=major, xlabel={$k$},ymode=log, legend entries={$\|U_{k}-U_*\|$, $1.981\times(0.8000)^k$}, legend style={at={(axis cs:65,10e-8)}}]
\addplot[red] table [x=k, y=norm] {data33_p.dat};
\addplot[blue, very thick, dotted] table [x=k, y=Ack] {data33_p.dat};
\end{axis}
\end{tikzpicture}
        \end{minipage}
        \begin{minipage}{.45\textwidth}
\centering
\begin{tikzpicture}[scale=0.7]
\begin{axis}[grid=major, xlabel={$k$},ymode=log, legend entries={$\|U_{k}-U_*\|$, $0.174\times (0.2000)^k$}, legend style={at={(axis cs:19,10e-17)}}]
\addplot[red] table [x=k, y=norm] {data33_m.dat};
\addplot[blue, very thick, dotted] table [x=k, y=Ack] {data33_m.dat};
\end{axis}
\end{tikzpicture}
        \end{minipage}
    \end{tabular}
    \caption{The left figure displays a log-plot of $\|U_k-U_*\|$ with $t_0 > 1$ and the right figure shows a log-plot with $t_0 < 0$  in Example $\ref{ex:posdim}$, and their line fittings.}      
\label{fig:posdim}
\end{figure}

\end{example}

\begin{example}[Intersection with $2$-plane]\label{ex:linear}  
Consider the parametrized matrix
{\small \[
U(p) = \begin{pmatrix}
 1 & 0 & p_1 & p_2 \\
 0 & 0 & p_1 & p_2 \\
 p_1 & p_1 & 0 & 0 \\
 p_2 & p_2 & 0 & 0
\end{pmatrix}.
 \]}%
Then $U(p)$ represents $2$-plane $E$ in $\S^4$.
Now $E\cap \S^4_+ = \{U_*\}$ with $U_* = U(0,0)$, and
the singularity degree of $E\cap \S^n_+$ is $1$.
By the bounds based on the singularity degree given in \cite[Theorem 2.4]{DLW2017}, an upper bound for the convergence rate of AP sequence $\{U(p^{(k)})\}$ for $E$ and $\S^4_+$ is $O(k^{-\frac{1}{2}})$.
However, the formula in Proposition \ref{prop:formula_eigen} ensures that the actual convergence rate of the AP sequence is linear as follows.

For $r = \sqrt{p_1^2 + p_2^2}$, the characteristic equation of $U(p)$ is 
written as 
\[
 \lambda\left(\lambda^3 - \lambda^2 - 2r^2 \lambda + r^2\right) = 0.
\]
We consider the parametric equation $\lambda^3 - \lambda^2 - 2r^2 \lambda + r^2 = 0$ with the parameter $r$.
When $r = 0$, the polynomial $\lambda^3 - \lambda^2$
has $1$ as a simple zero and $0$ as a double zero.
Since the constant term is positive for $r>0$,
by considering the graph of $\lambda^3 - \lambda^2$, we see that the solutions to the equation are $1 + O(r)$, a positive and a negative solution
for sufficiently small $r>0$.
To apply Proposition $\ref{prop:formula_eigen}$, we will find the negative solution.
By putting $\lambda = -ru$, we obtain
$-r^3 u^3 - r^2 u^2 + 2r^3 u + r^2 = 0$
and thus $\displaystyle r = \frac{(u + 1)(u - 1)}{u(2 - u^2)}$.
Additionally, if we put $u = 1 + v$, then
 $\displaystyle r = \frac{v(v + 2)}{(v + 1)(1 - 2v - v^2)}$.
This means that $r$ is a rational function in $v$.
By computation, we have $dr/dv|_{v=0} = 2 \neq 0$, and
then the inverse function $v=v(r)$ near $r=0$ is also analytic.
Since $r = 2v + 3v^2 + O(v^3)$, we have $v = r/2 - 3r^2/8 + O(r^3)$. Thus the negative eigenvalue can be written as $\lambda = -r + h(r) = -\sqrt{p_1^2 + p_2^2} + h(\sqrt{p_1^2 + p_2^2})$, where $h(r) = O(r^2)$ is a convergent power series around $r = 0$. Then
\[
 \frac{\partial}{\partial p_1}\lambda^2
= 2p_1 - \frac{2p_1}{r}h(r) - 2p_1 h'(r) 
+ \frac{2p_1}{r}h(r)h'(r)
= 2p_1\left(1 + O(r)\right),
\]
and hence 
\[
 \tp_1 = p_1 - \frac{1}{8}\frac{\partial}{\partial p_1}
\lambda^2 = \frac{3}{4}p_1 + p_1 O(r).
\]
Similarly, $\tp_2 = \frac{3}{4}p_2 + p_2 O(r)$. Thus we have
\[
\tilde{r} := \sqrt{\tp_1^2 + \tp_2^2} = \sqrt{\frac{9}{16}r^2 + O(r^3)}
= \frac{3}{4}r + O(r^2).
\]
Since 
$\|U(p) - U_*\| = 2r$, we see that for small $\delta>0$,
\[
\|U(\tp) - U_*\|=2\tilde{r} = 2\cdot\frac{3}{4}r(1 + O(r)) < 2\cdot\left(\frac{3}{4}+\delta\right) r = \left(\frac{3}{4}+\delta\right)\|U(p) - U_*\|
\]
for $p$ sufficiently close to $(0,0)$.
Therefore, the actual convergence rate of an AP sequence is linear. 
Figure $\ref{fig:linear}$ is consistent with our estimate that the convergence rate of $\|U_k - U_*\|$
 is approximately $O((3/4)^k)$.
\begin{figure}[htbp]
\centering
\begin{tikzpicture}[scale=0.7]
\begin{axis}[grid=major, xlabel={$k$}, ymode=log, legend entries={$\|U_{k}-U_*\|$, $0.146\times(0.749348)^k$}, legend style={at={(axis cs:31,0.03)},anchor=south east}]
\addplot[red, ] table [x=k, y=norm,] {data34.dat};
\addplot[blue, very thick, dotted] table [x=k, y=A*c^k] {data34.dat};
\end{axis}
\end{tikzpicture}
\caption{Log plot of $\|U_k-U_*\|$ in Example 3.4}
\label{fig:linear}
\end{figure}

\end{example}

\section{Analytic formula when $P_{\S^3_+}(U)$ is rank $1$}
\label{sec:analytic_det}
In the rest of the paper, we consider $\S_+^3$ 
and an affine subspace $E$ whose intersection with $\S^3_+$ is 
$U_* = \left(
\begin{smallmatrix}
1 & 0 & 0 \\
0 & 0 & 0 \\
0 & 0 & 0 
\end{smallmatrix}
\right)
$.
Let $D_k = \{V \in \S^3: \rank V \leq k\}$; i.e., the determinantal variety of rank at most $k$.
Then $D_2$ contains the boundary of $\S^3_+$ and its singular locus is $D_1$. Since $\S^3_+$ is convex, $D_1\cap \S^3_+$ is geometrically interpreted as the ridge of the boundary of $\S^3_+$. 
Hence, $P_{\S^3_+}(U)$ is expected to be frequently included in $D_1$ for $U\in E$.
In fact, this often happens in numerical experiments.
Thus we consider the case that $P_{\S^3_+}(U)\in D_1$ for $U\in E$ sufficiently close to $U_*$. In Section $\ref{sec:rate}$, this case will appear to be crucial for the convergence analysis.
 We note that $\dim D_1 = 3$ and $\dim \S^3 = 6$ and then the complementary dimension of $D_1$
is $3$. 
Since $E\cap \S^3_+$ is a singleton and contained in $D_1$, we assume that the dimension of $E$ is $3$ so that the intersection of $E$ and $D_1$ is zero-dimensional in general.
%
%
\subsection{Analytic formula with a distance function}
Let $B_1,B_2,B_3$ be an orthogonal basis for $E-U_*$ and
\[
  \phi(p) = U_* + p_1 B_1 + p_2 B_2 + p_3 B_3,\quad
 \psi(x) =  \frac{1}{x_1}
\begin{pmatrix}
 x_1\\
x_2\\
x_3
\end{pmatrix}
\begin{pmatrix}
 x_1 & x_2 & x_3
\end{pmatrix}
\]
and $x_*= (1,0,0)$. 
Then $\phi:\R^3\to E$ and $\psi:\{x\in \R^3:x_1\neq 0\}\to D_1$.
It is easily verified that the image of $\psi$ contains an open neighborhood of $U_*$ in $D_1$.

For $f:\R^3\to \R$, define $\partial_k f(x) = \frac{\partial}{\partial x_k}f(x)$ for $k=1,2,3$.
\begin{thm}
\label{thm:analytic_det}
Suppose that $E$ is a 3-plane in $\S^3$, 
$E\cap \S^3_+ = \{U_*\}$ and 
$P_{\S^3_+}\circ\phi(p)$ has rank $1$. 
Let $\tp,x\in \R^3$ satisfy the relation
$\displaystyle \phi(p) \overset{P_{\S^3_+}}{\longmapsto} \psi(x) 
 \overset{P_E}{\longmapsto} \phi(\tp)$.
Then we have
\[
 M(x)(\tp - p) + \nabla_x \frac{1}{2}d^2(\psi(x), E) = 0,
\]
where 
$  M(x) = \Big(\left\langle \partial_k\psi(x), B_i\right\rangle\Big)_{k,i}\in \R^{3\times 3}$.
Moreover, if $\det M(x_*)\neq 0$ and $p$ is sufficiently close to $(0,0,0)$, then
\[
  \tp = p -  M(x)^{-1}\nabla_x \frac{1}{2}d^2(\psi(x), E).
\]
\end{thm}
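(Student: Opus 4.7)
The plan is to adapt the strategy of Proposition~\ref{prop:formula_eigen} with the roles of $E$ and $\S^3_+$ interchanged: instead of differentiating the squared distance to $\S^3_+$ in the coordinates $p$ of $E$, I will differentiate the squared distance to $E$ in the rank-one parameters $x$. Setting $f(x) := \frac{1}{2} d^2(\psi(x), E)$, the identity $\nabla_V \frac{1}{2} d^2(V, E) = V - P_E(V)$ for the closed convex affine set $E$, together with the chain rule, yields
\[
\partial_{x_k} f(x) \;=\; \bigl\langle \psi(x) - \phi(\tp),\, \partial_k \psi(x)\bigr\rangle, \qquad k = 1, 2, 3.
\]

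Next I would exploit the assumption that $\psi(x) = P_{\S^3_+}(\phi(p))$ has rank one. Near $x_* = (1,0,0)$ the map $\psi$ is a smooth local parametrization of an open subset of the rank-one symmetric matrices, all of which are positive semidefinite (since $x_1 > 0$ there). Because $\psi(x)$ minimizes $\|\phi(p) - V\|^2$ over all $V \in \S^3_+$, it in particular locally minimizes $y \mapsto \|\phi(p) - \psi(y)\|^2$ on the smooth chart, and the first-order conditions give
\[
\bigl\langle \phi(p) - \psi(x),\, \partial_k \psi(x)\bigr\rangle \;=\; 0, \qquad k = 1, 2, 3.
\]
Geometrically this says that $\phi(p) - \psi(x)$ lies in the normal space to the rank-one variety at $\psi(x)$. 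Verifying this step carefully is the main obstacle: one has to ensure $\psi(x)$ stays in the smooth stratum of the determinantal variety (equivalently, $x_1 \neq 0$) and that an $\S^3_+$-neighborhood of $\psi(x)$ contains all $\psi(y)$ for $y$ near $x$, so that the unconstrained first-order conditions on the chart $\psi$ actually follow from the convex projection.

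Substituting $\phi(p) - U_* = \sum_i p_i B_i$ and $\phi(\tp) - U_* = \sum_i \tp_i B_i$ into the chain-rule expression, and then using the orthogonality to rewrite
\[
\bigl\langle \psi(x) - U_*, \partial_k \psi(x)\bigr\rangle \;=\; \bigl\langle \phi(p) - U_*, \partial_k \psi(x)\bigr\rangle \;=\; \sum_i p_i \bigl\langle B_i, \partial_k \psi(x)\bigr\rangle,
\]
gives
\[
\partial_{x_k} f(x) \;=\; \sum_i (p_i - \tp_i)\,\bigl\langle B_i, \partial_k \psi(x)\bigr\rangle,
\]
which is exactly the matrix identity $M(x)(\tp - p) + \nabla_x \frac{1}{2} d^2(\psi(x), E) = 0$. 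For the second assertion, $p \to 0$ forces $\phi(p) \to U_*$, hence $\psi(x) = P_{\S^3_+}(\phi(p)) \to U_* = \psi(x_*)$; since $\psi$ is a local diffeomorphism at $x_*$, it follows that $x \to x_*$. Continuity of $\det M$ then propagates the hypothesis $\det M(x_*) \neq 0$ to a neighborhood of $x_*$, and inverting yields $\tp = p - M(x)^{-1} \nabla_x \frac{1}{2} d^2(\psi(x), E)$.
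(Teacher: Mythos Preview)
Your argument is correct and follows essentially the same strategy as the paper: both proofs rest on the first-order condition $\langle \partial_k\psi(x),\,\psi(x)-\phi(p)\rangle = 0$ for the projection onto $\S^3_+$ at a rank-one point, and both identify the remaining piece with $\partial_k\,\tfrac12 d^2(\psi(x),E)$. The only difference is in how that identification is carried out: the paper extends $B_1,B_2,B_3$ to a full orthogonal basis $B_1,\dots,B_6$ of $\S^3$ and expands $d^2(\psi(x),E)=\sum_{j=4}^6 \langle B_j,\psi(x)-U_*\rangle^2/\|B_j\|^2$ explicitly, whereas you bypass that step by applying the chain rule to the known gradient $\nabla_V\,\tfrac12 d^2(V,E)=V-P_E(V)$---a slightly more streamlined route to the same identity.
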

\begin{proof}
Since $P_{S^3_+}(\phi(p))\in D_1\subset \S^3_+$, we see that
$P_{S^3_+}(\phi(p)) = \mathop\mathrm{argmin}_{x'\in \R^3, x'_1\neq 0}\|\psi(x') - \phi(p)\|_F^2$.
In addition, $\phi(\tp) = P_E(\psi(x))$ is equivalent to $(\psi(x) - \phi(\tp))\perp E$.
Thus, we have
\[
\psi(x) = P_{\S^3_+}(\phi(p))\quad 
\Longleftrightarrow
 (*)
\begin{cases}
& \langle \partial_k \psi(x), \psi(x) - \phi(p) \rangle = 0,\ k = 1, 2, 3\\
& \tp = 
\left(
\frac{\left\langle B_1, \psi(x) - U_*\right\rangle}{\|B_1\|^2},
\frac{\left\langle B_2, \psi(x) - U_*\right\rangle}{\|B_2\|^2},
\frac{\left\langle B_3, \psi(x) - U_*\right\rangle}{\|B_3\|^2}
\right)^T.
\end{cases}
\] 
By extending the basis for $E-U_*$, we obtain
an orthogonal basis $B_1, \ldots, B_6$ for $\S^3$.
Then the distance function can be written as
\begin{align*}
 d^2(\psi(x), E) & = \|\psi(x) - P_E(\psi(x))\|^2
\\
& = \left\|
U_* + 
\sum_{j=1}^6
\frac{\left\langle B_j, \psi(x) - U_*\right\rangle}{\|B_j\|^2}B_j
- \left(U_* + 
\sum_{i=1}^3 
\frac{\left\langle B_i, \psi(x)  - U_*\right\rangle}{\|B_i\|^2}B_i
\right)
\right\|^2\\
& =\left\|
\sum_{j=4}^6
\frac{\left\langle B_j, \psi(x) - U_*\right\rangle}{\|B_j\|^2}B_j
\right\|^2 
= 
\sum_{j=4}^6
\frac{\left\langle B_j, \psi(x) - U_*\right\rangle^2}{\|B_j\|^2}.
\end{align*}
By rewriting the condition $(*)$ using the basis, we have, for $k=1, 2, 3$, 
\begin{align*}
 0 & = \langle \partial_k \psi(x),\ \psi(x) - \phi(p) \rangle \\
 & = \left\langle 
\sum_{j=1}^6\partial_k \psi_i(x),\
U_* + \sum_{j=1}^6\psi_j(x) - 
\left(U_* + \sum_{i=1}^3 \phi_i(p) \right)\right\rangle\\
& = \left\langle
\sum_{j=1}^6 \frac{\left\langle B_j, \partial_k\psi(x) \right\rangle}{\|B_j\|^2}B_j,\ 
\sum_{j=1}^6\frac{\left\langle B_j, \psi(x) - U_*\right\rangle}{\|B_j\|^2}B_j  
- \sum_{i=1}^3 p_i B_i\right\rangle \\
& = \left\langle 
\sum_{j=1}^6 \frac{\left\langle B_j, \partial_k\psi(x) \right\rangle}{\|B_j\|^2}B_j,\ 
\sum_{i=1}^3\left(\frac{\left\langle B_i, \psi(x) - U_*\right\rangle}{\|B_i\|^2} - p_i\right)B_i 
+ \sum_{j=4}^6\frac{\left\langle B_j, \psi(x) - U_*\right\rangle}{\|B_j\|^2}B_j  \right\rangle \\
& =
\sum_{i=1}^3 \left\langle B_i, \partial_k\psi(x) \right\rangle
\left(\frac{\left\langle B_i,\psi(x) - U_*\right\rangle}{\|B_i\|^2} - p_i\right)
+ \sum_{j=4}^6\left\langle B_j, \partial_k\psi(x) \right\rangle\frac{\left\langle B_j, \psi(x) - U_*\right\rangle}{\|B_j\|^2}\\
& = 
\sum_{i=1}^3 \left\langle B_i, \partial_k\psi(x) \right\rangle
(\tp_i - p_i)
+ \partial_k\cdot \frac{1}{2}\sum_{j=4}^6
\left(\frac{\left\langle B_j, \psi(x) - U_*\right\rangle}{\|B_j\|}\right)^2\\
& = 
\sum_{i=1}^3 \left\langle B_i, \partial_k\psi(x) \right\rangle
(\tp_i - p_i)
+ \partial_k\cdot \frac{1}{2}d^2(\psi(x), E).
\end{align*}

\end{proof}

\begin{remark}
Since both of $D_1$ and $E$ have $3$ parameters, the matrix $M(x)$ defined in Theorem $\ref{thm:analytic_det}$ is a square matrix.
If we consider general $D_k\subset\S^n_+$ and $E$, then $M(x)$ may become a rectangle matrix.
\end{remark}

\begin{remark}
We can connect the formula in Proposition $\ref{prop:formula_eigen}$
with that in Theorem $\ref{thm:analytic_det}$ in the following way.
 Let $\tp,x\in \R^3$ be such that
 $\displaystyle \phi(p) \overset{P_{\S^3_+}}{\longmapsto} \psi(x) 
 \overset{P_E}{\longmapsto} \phi(\tp)$.
Since $\tp_i - p_i = -\frac{\partial}{\partial p_i} \frac{1}{2\|B_i\|^2}d^2(\phi(p), \S^3_+)$,
we have
 \[
 \nabla_x \frac{1}{2}d^2(\psi(x), E) = \hM(x)\nabla_p \frac{1}{2}d^2(\phi(p), \S^3_+), \text{ where }
 \hM(x) = \left(\frac{\left\langle \partial_k\psi(x), B_i \right\rangle}{\|B_i\|^2}\right)_{k,i}.
 \]
 Let $U = \phi(p)$. Then we can also write
 \[
 \nabla_x \frac{1}{2}d^2(P_{\S^3_+}(U), E) =  \hM(x)\nabla_p \frac{1}{2}d^2(U, \S^3_+).
 \]
\end{remark}


\subsection{Equations for the slowest curve}
\label{sec:slowest}
The known results give upper bounds for the convergence rate of an AP sequence.
However, it is hard to show that a given upper bound is actually tight.
A key to show the tightness is to obtain a candidate for the initial point with which the AP sequence converges most slowly.

If $\det M(x_*)\neq 0$, then we have from Theorem  $\ref{thm:analytic_det}$ that
\[
  \tp = p - M(x)^{-1}\nabla_x \frac{1}{2}d^2(\psi(x), E).
\] 
Thus if we find a point that is a minimizer of $\displaystyle \min_{\|x\|=\delta}\|M(x)^{-1}\nabla \frac{1}{2}d^2(\psi(x), E)\|$ for $\delta\neq 0$, then the point gives the shortest step size with respect to the parameters of the alternating projection method.


\begin{example}
\label{ex:slowest_curve}
Let $E = \{U \in \S^3: \langle A_1, U\rangle = 1, \langle A_2, U\rangle = 0, \langle A_3,U\rangle = 0\}$, where
{\small \[
 A_1  = 
 \begin{pmatrix}
 1 & 0 & 0 \\
 0 & 0 & 1 \\
 0 & 1 & 0 
 \end{pmatrix},\
 A_2  = 
 \begin{pmatrix}
 0 & 0 & 1 \\
 0 & 1 & 0 \\
 1 & 0 & 0 
 \end{pmatrix},\ 
  A_3  = 
 \begin{pmatrix}
 0 & 0 & 0 \\
 0 & 0 & 0 \\
 0 & 0 & 1
 \end{pmatrix}.
\]}
Then $E$ is parameterized by
{\small \[
 \begin{pmatrix}
  1 & 0 & 0 \\
  0 & 0 & 0 \\
 0  & 0 & 0
 \end{pmatrix}
+ p_1
\begin{pmatrix}
 0 & 1 & 0\\
 1 & 0 & 0\\
 0 & 0 & 0
\end{pmatrix}
+ p_2
\begin{pmatrix}
 0 & 0 & -1\\
 0 & 2 & 0 \\
 -1 & 0 & 0
\end{pmatrix}
+ p_3
\begin{pmatrix}
 -2 & 0 & 0 \\
 0 & 0 & 1 \\
 0 & 1 & 0
\end{pmatrix}.
\]}
Now we have for $x_* = (1,0,0)$,
%
%
{\small
\begin{align*}
& M(x_*)^{-1}\nabla_x \frac{1}{2}d^2(\psi(x), E)= \\
& \frac{1}{6x_{1}^3}\begin{pmatrix} 2\,x_{1}\,\left(2\,x_{2}\,x_{3}^2+2\,x_{1}\,x_{2}\,x_{3}+x_{1}^2\,x_{3}-x_{1}\,x_{3}+x_{2}^3\right) \\
 -2\,x_{1}\,\left(3\,x_{3}^3+2\,x_{2}^2\,x_{3}+2\,x_{1}^2\,x_{3}+x_{1}\,x_{2}^2+x_{1}^2\,x_{2}-x_{1}\,x_{2}\right) \\
 3\,x_{3}^4+4\,x_{2}^2\,x_{3}^2+2\,x_{1}\,x_{2}^2\,x_{3}-2\,x_{1}\,x_{2}\,x_{3}+x_{2}^4-x_{1}^4+x_{1}^3 
\end{pmatrix}.
\end{align*}
}%
%
%
We consider the system $M(x_*)^{-1}\nabla \frac{1}{2}d^2(\psi(x), E)=\vo$
around $x_* = (1,0,0)$.
By the rational transformation $(x_{1}, x_{2}, x_{3}) = (1/(1+z), u/(1+z), v/(1+z))$, 
we obtain
%
%
\[
\begin{cases}
 f_1 := 2\,u\,v-v\,z+u^{3}+2\,u\,v^{2}  = 0,\\
 f_2 := -2\,v-u^{2}+u\,z-2\,u^{2}v-3\,v^{3} = 0,\\
 f_3 := z-2\,u\,v+2\,u^{2}v-2\,u\,v\,z+u^{4}+4\,u^{2}v^{2}+3\,v^{4} = 0
\end{cases}
\]
Since $x_*$ corresponds to $(u,v,z) = (0,0,0)$ and the Jacobian matrix
of $(f_1,f_2,f_3)^T$ at $(0,0,0)$ is 
$\left(
\begin{smallmatrix}
 0 & 0 & 0 \\
 0 & -2 & 0 \\
 0 & 0 & 1
\end{smallmatrix}\right)$, the equations $f_2 = f_3 = 0$ are solved
with respect to $v,z$ by convergent power series $v(u),z(u)$, respectively. 
%
By applying the inverse of the rational transformation $(x_{1}, x_{2}, x_{3}) = (1/(1+z), u/(1+z), v/(1+z))$ to $v(u), z(u)$ and using $x_2=t$ as the new parameter, we obtain
\[
 x_1(t) = 1 + t^3 - 2t^6 - \frac{3t^7}{8} + O(t^8),\ 
x_2(t) = t,\ 
x_3(t) = -\frac{t^2}{2} + \frac{t^5}{2} + \frac{3t^6}{16} +  O(t^8).
\]
Then the curve $x(t) = (x_1(t),x_2(t),x_3(t))$ satisfies
\begin{equation}
 M(x(t))^{-1}\nabla_x \frac{1}{2}d^2(\psi(x(t)), E) = 
\begin{pmatrix}
 \frac{t^7}{8}\\
 0 \\
0
\end{pmatrix} + O(t^8).\label{eq:ex_slowest}
\end{equation}
Thus $x(t)$ is the minimizer of $\displaystyle \min_{\|x\|=\delta}\|M(x)^{-1}\nabla \frac{1}{2}d^2(\psi(x), E)\|$ for some $\delta>0$.
We can also determine the degree of the leading term of the first component of $(\ref{eq:ex_slowest})$ without actually calculating $x(t)$.
We use \texttt{Singular} and calculate a weak normal form with the Mora's division algorithm and get
\[
 q f_1 = a_2 f_2 + a_3 f_3 - \frac{9}{512}u^7 + \text{ higher order terms},
\]
where $q, a_2, a_3\in \R[u,v,z]$ and $q(0,0,0) \neq 0$.
Here we use the negative degree reverse lexicographical ordering for $\R[z,v,u]$.
This means that we can find convergent power series in $u$ that solve $(f_1,f_2,f_3) = (\Theta(u^7), 0, 0)$, and
the degree $7$ is the highest such degree.

With this $x(t)$, we define
\begin{align*}
& p(t) := \\
& \left(\frac{\left\langle B_i, \psi(x(t)) - U_*\right\rangle}{\|B_i\|^2}\right)_{i=1}^3
+ M(x(t))^{-1}\nabla_x \frac{1}{2}d^2(\psi(x(t)), E)
= \begin{pmatrix}
   t + \frac{t^7}{8}\\[1ex]
   \frac{t^2}{2} - \frac{t^5}{2} - \frac{t^6}{16}\\[1ex]
  -\frac{t^3}{2} + t^6 + \frac{3t^7}{16}
  \end{pmatrix} + O(t^8)
\end{align*}
Then we have
\[
 \phi(p(t)) = 
\begin{pmatrix}
 1  + t^3 - 2t^6 - \frac{3t^7}{8} & t + \frac{t^7}{8} 
& -\frac{t^2}{2} + \frac{t^5}{2} + \frac{t^6}{16} \\
\vphantom{\frac{\displaystyle A^2}{\displaystyle B}} t + \frac{t^7}{8} & t^2 - t^5 - \frac{t^6}{8} 
& -\frac{t^3}{2} + t^6 + \frac{3t^7}{16} \\
\vphantom{\frac{\displaystyle A^2}{\displaystyle B}}
-\frac{t^2}{2} + \frac{t^5}{2} + \frac{t^6}{16} 
& -\frac{t^3}{2} + t^6 + \frac{3t^7}{16} & 0
\end{pmatrix} + O(t^8).
\]
For sufficiently small $t>0$, since $\det\phi(p(t)) = \frac{t^{10}}{32} + O(t^{11}) > 0$ and the first eigenvalue of $\phi(p(t))$ is close to $1$,
we see that $P_{\S^3_+}\circ\phi(p(t))$ has rank $1$.
By reversing the modifications of the equations in the proof of Theorem $\ref{thm:analytic_det}$, we obtain the relation
$\displaystyle \phi(p(t)) \overset{P_{\S^3_+}}{\longmapsto} \psi(x(t))$. Then we have
\begin{align*}
\tp & := \phi^{-1}\circ P_E \circ P_{\S^n_+} \circ \phi(p(t)) \\
& = p(t) - M(x(t))^{-1}\nabla_x \frac{1}{2}d^2(\psi(x(t)), E)
 = p(t) + 
\begin{pmatrix}
 \frac{t^7}{8}\\
 0 \\
0
\end{pmatrix} + O(t^8).
\end{align*}
Moreover, since the leading terms of the second and the third coordinates of $p(t)$ have degree
$2$ and $3$ respectively, 
we have
\[
 \tp = p\left(t - \frac{t^7}{24}\right) + O(t^8),
\]
and thus
\[
  P_E\circ P_{\S^3_+}(\phi(p(t))) = \phi\left(p\left(t - \frac{t^7}{24}\right)\right) + O(t^8).
\]
Therefore, if we choose a matrix on the curve $\phi(p(t))$ that is sufficiently close to $U_*$, 
then the matrix mapped by $P_E\circ P_{\S^3_+}$ can be written as $\phi(p(t - t^7/24)) + O(t^8)$. This means that the one-step alternating projection moves the matrix in the slowest way in a sense.
In Section $\ref{sec:rate}$, we can actually prove that an AP sequence gives the slowest convergence rate if the initial point is taken from a neighborhood of the curve $\phi(p(t))$.
\end{example}

\section{Family of $3$-planes intersecting with $\S^3_+$ at a single point}
\label{sec:3-planes}
\subsection{Parametrization}
In Example $\ref{ex:slowest_curve}$, we constructed a candidate for a curve that gives the slowest convergence rate for given numeric matrices $A_1,A_2,A_3$.
To construct such a curve for a general case, we will obtain a parameterization of the family of $3$-planes that intersect with $\S^3_+$ at a single point. 

\begin{prop}
\label{prop:3-planes}
Let $U_* = 
\left(
\begin{smallmatrix}
1 & 0 & 0 \\
0 & 0 & 0 \\
0 & 0 & 0  
\end{smallmatrix}
\right)
$.
A plane $E\subset \S^3$ satisfies $\S^3_+\cap E = \{U_*\}$ 
 and $\dim E = 3$ if and only if 
there exist $c_1,\ldots,c_8\in \R$ and an orthogonal matrix $\tP\in \R^{2\times 2}$ such that 
$E$ is written by
\[
  E = \{X \in \S^3:\langle A_1,X \rangle = 1,\ \langle A_2, X \rangle = 0,\ 
\langle A_3, X \rangle = 0\},
\]
where
$A_i$ are given as follows:
 \begin{align*}
 \intertext{Type $1$:}
& A_1 = 
P\begin{pmatrix}
 1 & c_1 & c_2 \\
 c_1 & c_3 & c_4 \\
 c_2 & c_4 & 0
 \end{pmatrix}P^T,\
 A_2 = 
P\begin{pmatrix}
 0 & c_5 & c_6 \\
 c_5 & c_7 & c_8 \\
 c_6 & c_8 & 0 
 \end{pmatrix}P^T,\
 A_3  = 
 P\begin{pmatrix}
 0 & 0 & 0 \\
 0 & \mu & 0 \\
 0 & 0 & 1
 \end{pmatrix}P^T,\\
& \mu >0,\ A_2\neq O,
 \intertext{or, Type $2$:}
& A_1  = 
 P\begin{pmatrix}
 1 & c_1 & c_2 \\
 c_1 & 0 & c_3 \\
 c_2 & c_3 & 0 
 \end{pmatrix}P^T,\
 A_2  = 
 P\begin{pmatrix}
 0 & 0 & c_4 \\
 0 & 1 & c_5 \\
 c_4 & c_5 & 0 
 \end{pmatrix}P^T,\ 
 A_3 = 
 P\begin{pmatrix}
 0 & 0 & 0 \\
 0 & 0 & 0 \\
 0 & 0 & 1
 \end{pmatrix}P^T,
 \end{align*}
where
$P = \text{{\scriptsize 
$
\left(\begin{array}{@{}c|c@{}}
1 & 
\begin{matrix}
0 & 0
\end{matrix}
\\
\hline
\begin{matrix}
0 \\ 0
\end{matrix} & 
\raisebox{-0.3em}{\mbox{\small $\tP$}}
\end{array}\right)
$}}$ in both types.
\end{prop}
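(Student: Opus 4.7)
My plan is to prove the biconditional in two directions. Sufficiency is a direct PSD-block computation; necessity requires a case analysis on the rank of a $3\times 3$ coefficient matrix. Throughout, conjugation by $P = \mathrm{diag}(1, \tP)$ fixes $U_*$ and $\S^3_+$ simultaneously, so I can reduce to $\tP = I$ when convenient.

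For sufficiency, conjugating by $P$ reduces to $\tP = I$. Writing $V = X - U_* = \begin{pmatrix} a & b^T \\ b & D \end{pmatrix}$ with $a \in \R$, $b \in \R^2$, $D \in \S^2$, the constraint $\langle A_3, V\rangle = 0$ reads $\mu D_{11} + D_{22} = 0$ in Type $1$ and $D_{22} = 0$ in Type $2$. Since $U_* + V \succeq 0$ requires $D \succeq 0$, Type $1$ with $\mu > 0$ forces $D = 0$, while Type $2$ forces $D_{12} = 0$ and $D = \mathrm{diag}(d, 0)$ with $d \geq 0$. The Schur-complement part of $U_* + V \succeq 0$ then gives $b = 0$ in Type $1$ and $b_2 = 0$ in Type $2$; in Type $2$ the constraint $\langle A_2, V\rangle = 0$ further forces $d = 0$; and $\langle A_1, V\rangle = 0$ together with the remaining Schur inequality yields $b_1 = a = 0$. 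Hence $V = 0$ in every case.

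For necessity, let $L = E - U_*$ and let $\pi: \S^3 \to \S^2$ project onto the lower-right $2 \times 2$ block. Pick a basis $A_1, A_2, A_3$ of $L^\perp$ and decompose $A_i = \alpha_i U_* + (\text{off-diagonal block from } \beta_i \in \R^2) + C_i^{\mathrm{ext}}$ with $C_i = \pi(A_i) \in \S^2$. Two reductions: first, if $V \in L$ has $\pi(V) \succ 0$ then $U_* + tV \succ 0$ for small $t > 0$, hence $\pi(L) \cap \mathrm{int}(\S^2_+) = \emptyset$; second, if every $\alpha_i = 0$ then $\{tU_* : t \geq 0\} \subset E \cap \S^3_+$ is a ray, so some $\alpha_i \neq 0$ and I may normalize $\alpha_1 = 1$, $\alpha_2 = \alpha_3 = 0$ by row operations. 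Let $M \in \R^{3 \times 3}$ have rows $(\alpha_i, 2\beta_i)$. If $\rank M = 3$ then $\pi(L) = \S^2$, contradicting the first reduction; hence $\rank M \in \{1, 2\}$, and when $\rank M = 2$ a further row operation makes $\beta_3 = 0$.

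When $\rank M = 2$, $\pi(L) = \{D \in \S^2 : \langle C_3, D\rangle = 0\}$ is a hyperplane missing $\mathrm{int}(\S^2_+)$, which forces $C_3$ (after a sign change of $A_3$) to be a nonzero positive semidefinite matrix; an orthogonal $\tP \in O(2)$ diagonalizes it to $\mathrm{diag}(\mu, 1)$ or $\mathrm{diag}(0, 1)$, and absorbing multiples of $A_3$ into $A_1, A_2$ clears the $(2, 2)$ entry of $C_1, C_2$ in the rotated frame, giving Type $1$ when $\mu > 0$. For $\mu = 0$ I must argue further: if the first rotated coordinate of $\beta_2$ is nonzero or if $(C_2)_{11}^{\mathrm{rot}} = 0$, then taking $D = d\, \tp_1 \tp_1^T$ for small $d > 0$ (where $\tp_1$ is the first column of $\tP$) produces an explicit nonzero $V \in L$ with $U_* + V \succeq 0$, contradicting uniqueness; after these alignments a rescaling of $A_2$ delivers Type $2$. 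When $\rank M = 1$, $\pi(L)$ is a line $\R D_0$ orthogonal to $\Span(C_2, C_3) \subset \S^2$; a parallel destabilizing construction forces $D_0$ to be indefinite, and within the $2$-dimensional $D_0^\perp$ there is a positive-definite element to serve as $C_3$, producing Type $1$. The main obstacle I expect is this rank-one-$C_3$ alignment step: beyond dimension counting, I must exhibit explicit concrete elements of $L \cap (\S^3_+ - U_*)$ in each failing configuration, a task that mirrors the Schur-complement analysis driving the sufficiency direction.
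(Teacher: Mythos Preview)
Your proof is correct, and the overall architecture---establish the block-PSD constraint on $\pi(L)$, then normalize and do a case split to extract $A_3$ with $\alpha_3=\beta_3=0$ and $C_3\succeq 0$---is sound. The route, however, differs from the paper's. The paper immediately invokes the fact that $E$, meeting $\S^3_+$ only at the boundary point $U_*$, must lie in a supporting hyperplane; the normal-cone description $N_{\S^3_+}(U_*)=\{A:\tA\in\S^2_+,\ \text{first row/column}=0\}$ then hands over $A_3$ in the required form in one stroke, and the remaining work is only the two-way split on $\rank\tA_3$. Your approach replaces this single convex-analytic step by the rank analysis of $M=(\alpha_i,2\beta_i)_i$: you rule out $\rank M=3$, recover the paper's $A_3$ when $\rank M=2$ via a separating-hyperplane argument on $\pi(L)$, and must handle $\rank M=1$ separately by showing $D_0$ is indefinite and then locating a positive-definite element in $D_0^\perp$. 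The gain is that your argument is entirely elementary---no normal cones, only Schur complements and $2\times2$ linear algebra---and makes the destabilizing constructions fully explicit; the cost is the extra $\rank M=1$ branch and slightly more bookkeeping. Note that your $\rank M=1$ case lands only in Type~1, which is fine because Type~2 with $c_4=0$ is also expressible as Type~1 (the two types overlap), but it is worth flagging that your case split does not line up one-to-one with the Type~1/Type~2 dichotomy.
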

\begin{proof}
If $E$ is a Type $1$ plane, then 
we can easily show $\S^3_+\cap E = \{U_*\}$
and $\dim E = 3$.
If $E$ is a Type $2$ plane,
then $X \in E$ is written as
\[
  X = P
\begin{pmatrix}
 1 - 2c_1s + 2c_2t - 2c_3u & s & t \\
 s & -2c_4t -2c_5u & u \\
 t & u & 0
\end{pmatrix}
P^T
\]
for $s,t,u\in \R$. Thus $\S^3_+\cap E = \{U_*\}$ and $\dim E = 3$.

Next, we will show the converse.
Suppose $\S^3_+ \cap E = \{U_*\}$ and $\dim E = 3$.
Then $E$ is contained in a supporting hyperplane to $\S^3_+$ at $U_*$.
 The set of normal vectors to a supporting hyperplane 
of $\S^3_+$ at $U_*$ is given by
\[
 N_{\S^3_+}(U_*) = \{A \in \S_+^3: \langle A, U_* \rangle = 0 \}
= \left\{
\left(\begin{array}{@{}c|c@{}}
0 & 
\begin{matrix}
0 & 0
\end{matrix}
\\
\hline
\begin{matrix}
0 \\ 0
\end{matrix} & 
\raisebox{-0.3em}{\mbox{\large $\tA$}}
\end{array}\right):\tA\in \S^2_+
\right\},
\]
see, e.g. \cite[Section 4.2.4]{HM}.
Then there exists $A_3 \in N_{\S^3_+}(U_*)$ such that $E \subset E_3$ for 
\[
 E_3 := \{X \in \S^3: \langle A_3, X - U_*\rangle = 0\}
= \{X \in \S^3: \langle A_3, X\rangle = 0\}.
\]
Here, for a matrix $X$, we partition $X$ as in the expression of $N_{\S^3_+}(U_*)$ and denote the right-lower part of $X$ as $\tX$.
%
%

If $\rank \tA_3 = 2$, then there exists an orthogonal matrix $\tP\in \R^{2\times 2}$
such that 
$\Lambda_3 :=
\left(
\begin{smallmatrix}
 0 & 0 & 0\\
 0 & \lambda_1 & 0\\
 0 & 0 & \lambda_2
\end{smallmatrix}
\right) =
P^TA_3P 
$ for 
$P = \text{{\scriptsize 
$
\left(\begin{array}{@{}c|c@{}}
1 & 
\begin{matrix}
0 & 0
\end{matrix}
\\
\hline
\begin{matrix}
0 \\ 0
\end{matrix} & 
\raisebox{-0.3em}{\mbox{\small $\tP$}}
\end{array}\right)
$}}$.
We may assume $\lambda_2 = 1$.
Then we have
\begin{align*}
 X \in \S^3_{+} \cap E_3 & \Longleftrightarrow X\in \S^3_+,\ \langle \tA_3, \tX \rangle = 0
 \Longleftrightarrow X = 
\left(\begin{array}{@{}c@{}|c@{}}
x_{11} & 
\begin{matrix}
0 & 0
\end{matrix}
\\
\hline
\begin{matrix}
0 \\ 0
\end{matrix} & 
\raisebox{-0.3em}{\mbox{\large $O$}}
\end{array}\right),\ x_{11}\geq 0.
\end{align*}
Thus $\S^3_+ \cap E_3$ is a half line. 
Since $\S^3_+ \cap E = \{U_*\}$ and $\dim E = 3$, there exist hyperplanes $E_1 = \{X\in \S^3:\langle A_1, X\rangle = 1\}$ and $E_2 = \{A_2\}^\perp$ such that $E = E_1\cap E_2 \cap E_3$.
Since $P U_* P^T = U_* \in E$, we see that $A_1, A_2, A_3$ 
satisfy
 \begin{align*}
 & 
A_1  = 
P^T\begin{pmatrix}
 1 & c_1 & c_2 \\
 c_1 & c_3 & c_4 \\
 c_2 & c_4 & c_5 
\end{pmatrix}P,\
A_2  = 
P^T\begin{pmatrix}
 0 & c_6 & c_7 \\
 c_6 & c_8 & c_9 \\
 c_7 & c_9 & c_{10} 
\end{pmatrix}P,\ 
A_3  = 
P^T\begin{pmatrix}
 0 & 0 & 0 \\
 0 & \mu & 0 \\
 0 & 0 & 1
\end{pmatrix}P,
\\
& 
\mu > 0,\ 
\{A_1,A_2,A_3\}: \text{linearly independent},
\end{align*}
for some $\mu, c_i \in \R$.
By deleting redundant parameters, we obtain the matrices in Type $1$.


If $\rank \tA_3 = 1$, then
there exist
$\lambda > 0$ and an orthogonal matrix $\tP\in \R^{2\times 2}$ such that
$A_3 = 
P\left(
\begin{smallmatrix}
 0 & 0 & 0 \\
 0 & 0 & 0 \\
 0 & 0 & \lambda
\end{smallmatrix}
\right)P^T
$
for 
$P = \text{{\scriptsize 
$
\left(\begin{array}{@{}c|c@{}}
1 & 
\begin{matrix}
0 & 0
\end{matrix}
\\
\hline
\begin{matrix}
0 \\ 0
\end{matrix} & 
\raisebox{-0.3em}{\mbox{\small $\tP$}}
\end{array}\right)
$}}$.
We may assume $\lambda = 1$.
By a similar argument above, we see that $A_1, A_2, A_3$ are given by
\[
A_1 = P\begin{pmatrix}
 1 & c_1 & c_2 \\
 c_1 & d_1 & c_3 \\
 c_2 & c_3 & 0 
\end{pmatrix}P^T,\ 
A_2 = 
P\begin{pmatrix}
 0 & d_2 & c_4 \\
 d_2 & d_3 & c_5 \\
 c_4 & c_5 & 0 
\end{pmatrix}P^T,\ 
A_3 = 
P\begin{pmatrix}
 0 & 0 & 0 \\
 0 & 0 & 0 \\
 0 & 0 & 1
\end{pmatrix}P^T
\]
for some $c_i, d_j\in \R$. In addition, for
$X = 
P\left(
\begin{smallmatrix}
 x_{11} & x_{21} & x_{31} \\
 x_{21} & x_{22} & x_{23} \\
 x_{31} & x_{23} & x_{33}
\end{smallmatrix}
\right)P^T\in E,
$
we have
\[
 \begin{cases}
 x_{11} + 2c_1 x_{21} + 2c_2x_{31} + d_1 x_{22} + 2c_3x_{23} = 1,\\
 2 d_2 x_{21} + 2c_4 x_{31} + d_3 x_{22}  + 2c_5 x_{23} = 0,\quad x_{33} = 0.
\end{cases}.
\]
If $d_3 = 0$, then 
$P\left(\begin{smallmatrix}
  1 - d_1t & 0 & 0 \\
  0 & t & 0 \\
  0 & 0 & 0
 \end{smallmatrix}\right)P^T \in \S^3_+\cap E$ for $t\in \R$
and hence it contradicts to $\S^3_+\cap E = \{U_*\}$.
Thus $d_3 \neq 0$. In addition, if $d_2 \neq 0$, then 
$P\left(
\begin{smallmatrix}
  1 - d_1t + (c_1d_3/d_2) t& -(d_3/2d_2)t & 0 \\
  -(d_3/2d_2)t & t & 0 \\
  0 & 0 & 0
 \end{smallmatrix}
\right)P^T
 \in E$ for $t\in \R$.
Since this matrix is positive semidefinite for sufficiently small $t>0$,
it is a contradiction and hence $d_2 = 0$.
Therefore we can write
$A_2 = 
P\left(
\begin{smallmatrix}
  0 & 0 & c_4 \\
 0 & 1 & c_5 \\
 c_4 & c_5 & 0 
\end{smallmatrix}
\right)P^T
$ and hence 
$A_1 = 
P\left(
\begin{smallmatrix}
  1 & c_1 & c_2 \\
 c_1 & 0 & c_3 \\
 c_2 & c_3 & 0 
\end{smallmatrix}
\right)P^T
$ by deleting $(2,2)$ element of $A_1$ using $A_2$.


\end{proof}

\begin{remark}
[relations to singularity degrees]
\label{rem:sd}
 It is well known that an upper bound of the convergence rate of alternating projections of 
 \[
 E=\{X\in \S^3:\langle A_1,X \rangle = 1,\ \langle A_2, X \rangle = 0,\ 
 \langle A_3, X \rangle = 0\}
 \]
 and $\S^3_+$ is given using the singularity degree; \cite{DLW2017}. 
 As explained in Example $\ref{ex:linear}$, 
 the singularity degree of $E\cap \S^3_+$ is either $0, 1$ or $2$.
 A Type $1$ plane in Proposition $\ref{prop:3-planes}$ has the singularity degree $1$ since $A_3$ itself is positive semidefinite.
 For a Type $2$ plane, a linear combination of $A_2$ and $A_3$ can be positive semidefinite if and only if $c_4 = 0$, in case the singularity degree is $1$.
 Therefore the singularity degree is $2$ if and only if $c_4 \neq 0$.
\end{remark}

\subsection{Pl\"ucker embedding}
\label{sec:plucker}
Let $\cD$ be the family of $3$-planes that intersect with $\S^3_+$ at 
$U_* = 
\left(
\begin{smallmatrix}
1 & 0 & 0 \\
0 & 0 & 0 \\
0 & 0 & 0 
\end{smallmatrix}
\right)$. Let $B_1,B_2,B_3$ be a basis of the linear space
\[
E'=\{X\in \S^3: \langle A_1,X \rangle = 0,\ \langle A_2, X \rangle = 0,\ 
\langle A_3, X \rangle = 0\}.
\]
For the standard basis $e_1,\ldots,e_6\in \S^3$,
the Pl\"ucker coordinate $\left\{C_{i_1,i_2,i_3}\right\}$ is given by
\[
 B_1 \wedge B_2 \wedge B_3
= \sum\{C_{i_1,i_2,i_3} e_{i_1}\wedge e_{i_2}\wedge e_{i_3}:1\leq i_1<i_2<i_3\leq 6\} \in \bigwedge^3\S^3.
\]
With these coordinates, $\cD$ is considered as a subset of the Grassmannian of $3$-planes in $\S^3$. 
By counting the number of parameters and noting that $P$ is a one-parameter matrix,
we see that the dimensions of the family of planes of Type $2$ is $6$.
Consider the dimensions of the family of planes of Type $1$. Since $A_2 \neq O$, one of $c_5,c_6,c_7,c_8$ can be replaced with $1$ and then the corresponding parameter in $A_1$ is redundant. Thus we conclude that the dimension of the family of planes of Type $1$ is $8$. 

We can explicitly calculate the defining ideals of these family of planes with \texttt{Macaulay 2} as follows.
First, define the ring with \texttt{SkewCommutative} elements $e_1,\ldots,e_6$.
We write $B_1,B_2,B_3$ as linear combinations of $e_1,\ldots,e_6$, and then
the coefficients of the product of these linear combinations are the coordinates $C_{i_1,i_2,i_3}$.
Let
\begin{align*}
  I & = \left\langle t\cdot C_{i_1,i_2,i_3} - x_{i_1,i_2,i_3} : 1\leq i_1<i_2<i_3 \leq 6 \right\rangle + \langle u_1^Tu_2,\|u_1\|^2 - 1, \|u_2\|^2 - 1 \rangle\\
& \subset \R[c_1,\ldots,c_{5},t,x_{1,2,3},\ldots,x_{4,5,6}].
\end{align*}
Then the ideal
\[
 J = I\cap \R[x_{1,2,3},\ldots, x_{4,5,6}]
\]
is the defining ideal of the Pl\"ucker embedding of the $3$-planes of Type $2$.
We can calculate $J$ with the command \texttt{elimination} in \texttt{Macaulay 2}.
Similarly, we obtain the defining ideal of the $3$-planes of Type $1$.

Then we have the following relation:
\[
 \begin{array}{lllllll}
\text{Gr}(3,6)
&
\supset
&
\cD
&
=
&
  \{
\text{Type $1$}
\}
&
\cup
&
\{
\text{Type $2$}
\}\\
\dim 9
&
&
&
&
\text{SD}=1,\ \dim 8, \text{ semialg.}
& 
&
\text{SD}=2,\ \dim 6\\
& & & & & &
\text{SD}=1,\ \dim 5
 \end{array}
\]
The Grassmannian of $3$-planes in $\S^3$ has dimension $9$.
$\cD$ is a semialgebraic subset with dimension $8$,
in which a generic plane intersects $\S^3_+$ with singularity degree $1$.
The planes with singularity degree $2$ form a $6$ dimensional subvariety.
Within the subvariety, there is a $5$ dimensional subvariety
whose point corresponds to a plane with singularity degree $1$.


\section{Rational formulas for a special curve}
\label{sec:rat}

In this section, we consider a Type $2$ plane with $c_4\neq 0$. As explained in Remark $\ref{rem:sd}$, the set of such planes is exactly the set of 3-planes whose intersections with $\S^3_+$ are $\{U_*\}$ and have singularity degree $2$.

A matrix in a Type $2$ plane is written as
\[
U_* + p_1 B_1 + p_2B_2 + p_3 B_3,
\]
where $U_* = 
\left(
\begin{smallmatrix}
 1 & 0 & 0 \\
0 & 0 & 0\\
0 & 0 & 0
\end{smallmatrix}\right)$ and
\begin{equation}
 B_1 = \begin{pmatrix} -2c_{1} & 1 & 0\\ 1 & 0 & 0\\ 0 & 0 & 0 \end{pmatrix},\quad
B_2 = \begin{pmatrix} 2c_{2} & 0 & -1\\ 0 & 2c_{4} & 0\\ -1 & 0 & 0 \end{pmatrix},\quad
B_3 = \begin{pmatrix} -2c_{3} & 0 & 0\\ 0 & -2c_{5} & 1\\ 0 & 1 & 0 \end{pmatrix},\label{basis}
\end{equation}
and $c_4 \neq 0$.
Note that $B_1, B_2, B_3$ are not necessarily orthogonal to each other.

\subsection{Special curve}
First, we present the special curve $G(t)$ for a Type $2$ plane with $c_4\neq 0$, 
which is, in fact, the slowest curve for such a plane, as obtained in Example $\ref{ex:slowest_curve}$ 
and will be used to show the tightness of a known upper bound for the convergence rate of alternating projections in Section $\ref{sec:rate}$. 
Then for a matrix $G(t)$ on the curve, we give rational formulas for 
$P_{\S^3_+}(G(t))$ and 
$P_E\circ P_{\S^3_+}(G(t))$ up to degree $7$.\\

Let 
\begin{align}
g_{13} = \tg_{13} + r_{13},\quad g_{23} = -\frac{\tg_{13}}{w}t + r_{23},
\label{eq:slowest_g}
\end{align}
where
\begin{align} 
\tg_{13} & = \frac{1}{2 c_4 w + 2 c_5 t}\,t^2 - \frac{2(2c_5^2 + 1)}{(2 c_4 w + 2 c_5 t)^5}\,t^6,\notag \\
  r_{13} & =   
 \frac{c_5}{c_4}r_0 t^7 
+ \frac{\langle B_2,B_1\rangle}{16\,c_4^6\,\|B_1\|^2}t^7,\quad
 r_{23} =  
- \frac{2c_5}{(2c_4 w + 2c_5 t)^4 w}\,t^6 + r_0 t^7,
 \notag\\
 r_0 & = \left(\frac{c_4\langle B_3, B_1\rangle + c_5\langle B_2, B_1\rangle}{8c_4^5\|B_1\|^2} + \frac{1}{8c_4^3}\right), \notag\\
 w & = 1 -2\,c_{1}\,t
+\frac{c_{2}\,t^2}{c_{4}\,\left(1 -2\,c_{1}\,t + 
\frac{c_{2}\,t^2}{c_{4}\,\left(1-2\,c_{1}\,t\right)+c_{5}\,t} +\frac{c_{3}\,t^3}{c_{4}}\right)+c_{5}\,t} \label{eq:w}
\\
 & \hspace{3em}+\frac{c_{3}\,t^3}{\left(c_{4}\,\left(1 - 2\,c_1\,t + \frac{c_{2}\,t^2}{c_{4}}\right)+c_{5}\,t\right)\,
\left(
1 - 2\,c_1\,t + \frac{c_{2}\,t^2}{c_{4}}\right)},\notag 
\end{align}
%
%
%
Define $g(t) = (t, g_{13}, g_{23})$, $\displaystyle h = \frac{2t^6}{(2c_4 w + 2c_5 t)^4w}
$ and
\begin{equation}
  G(t) := \phi(g(t)) = 
\begin{pmatrix} 
1 - 2c_1 t + 2c_2 g_{13} - 2c_3 g_{23} & t & -g_{13}\\ 
t & 2c_4 g_{13}-2c_5 g_{23} & g_{23}\\ 
-g_{13} & g_{23} & 0 
\end{pmatrix}.\label{eq:slowest}
\end{equation}
\begin{example}
\label{ex:moment}
 For $c_1 = c_4 = 1,\ c_2 = c_3 = c_5 = 0$, we have $w=1 - 2t$ and
 \begin{align*}
 g_{13} = & {{t^2}\over{2\,\left(1-2\,t\right)}} + {{t^6}\over{16\,\left(1-2\,t\right)^5}},\quad
 g_{23} =  -{{t^3}\over{2\,\left(1-2\,t\right)
 ^2}} + {{3\,t^7}\over{16\,\left(1-2\,t\right)^6}},\\
 G(t) = & 
 \begin{pmatrix} 
 1 - 2 t 
 & * & *\\ 
 t & 2 g_{13} & *\\ 
 -g_{13} & g_{23} & 0 
 \end{pmatrix},
 \end{align*} 
 where $*$ in the matrix stands for the corresponding element of the transpose of the lower triangular block part.
By changing the variable as $s = t/(1 - t)$, we see that
\[
 \frac{G(t)}{1 - 2t}
=
\begin{pmatrix}
 1 & * & * \\
 s & s^2  & *\\
-\frac{1}{2}s^2 & -\frac{1}{2}s^3 & 0
\end{pmatrix} + 
\begin{pmatrix}
 0 & * & * \\
 0 & - \frac{1}{8}s^6 & *\\
 \frac{1}{16}s^6 & \frac{3}{16}s^7 & 0
\end{pmatrix} + O(s^8).
\]
This is the curve obtained by perturbing a moment curve with the higher order terms.
\end{example}

\subsection{Rational formulas}
The following two formulas give rational expressions for projections $P_{\S^3_+}$ and
$P_E\circ P_{\S^3_+}$ along the curve $G(t)$ up to degree $7$. 
The proofs are given in the next section.
\begin{thm}
\label{thm:ratPS}
For sufficiently small $t>0$, we have
\begin{align}
 &  P_{\S^3_+}(G(t)) = G(t) +
\begin{pmatrix}
 0 & * & * \\
\displaystyle -\frac{t^7}{8c_4^4} 
& \displaystyle h - \frac{\langle B_2,B_1\rangle}{8\,c_4^5\,\|B_1\|^2}t^7 
& *
\\[2ex]
\displaystyle c_4 h 
& \displaystyle 
 c_5 h - \frac{c_5\langle B_2,B_1\rangle}{8\,c_4^5\,\|B_1\|^2}t^7- \frac{\langle B_3,B_1\rangle}{8c_4^4\|B_1\|^2}t^7  
& \displaystyle  \frac{g_{13}^2}{w}
\end{pmatrix} + O(t^8).\label{eq:ratPS}
\end{align}
\end{thm}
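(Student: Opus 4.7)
The plan is to verify the claimed formula directly by (i) confirming that $P_{\S^3_+}(G(t))$ has rank $1$ for sufficiently small $t>0$, so that it can be written as $\psi(x(t))$ for some smooth curve $x(t)$, and (ii) reading off $x(t)$ from the first column of the claimed right-hand side and checking that $\psi(x(t)) = x(t)x(t)^T/x_1(t)$ reproduces the remaining entries and satisfies the projection characterization up to $O(t^8)$.

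For (i), at $t=0$ we have $G(0) = U_*$ with eigenvalues $1, 0, 0$, so by continuity one eigenvalue of $G(t)$ stays near $1$. Computing the leading coefficient of $\det G(t)$ from (\ref{eq:slowest}) and showing it is strictly positive, together with the sign of a suitable $2\times 2$ principal minor, forces the remaining two eigenvalues to be strictly negative. Then $P_{\S^3_+}(G(t))$ zeroes those eigenvalues out and has rank $1$; because the top eigenvalue is isolated, the spectral projector is smooth and one may write $P_{\S^3_+}(G(t)) = \psi(x(t))$ for a smooth $x(t) = (x_1(t), x_2(t), x_3(t))$ with $x_1(t) > 0$.

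For (ii), the first column of $\psi(x(t))$ is literally $x(t)$, so comparing with the claimed right-hand side gives the candidate
\[
x_1(t) = 1 - 2c_1 t + 2c_2 g_{13} - 2c_3 g_{23},\ x_2(t) = t - \frac{t^7}{8 c_4^4},\ x_3(t) = -g_{13} + c_4 h,
\]
each modulo $O(t^8)$. Using the explicit definitions (\ref{eq:slowest_g}) and (\ref{eq:w}), one expands $x_2^2/x_1$, $x_2 x_3/x_1$, and $x_3^2/x_1$ and checks term by term that they agree with the $(2,2), (2,3), (3,3)$ entries of the claimed matrix modulo $t^8$. To finish, I would verify the first-order optimality conditions $\langle \partial_k \psi(x(t)), \psi(x(t)) - G(t)\rangle = O(t^8)$ from the proof of Theorem \ref{thm:analytic_det}; combined with $\psi(x(t)) \succeq 0$ and the established rank-$1$ structure of the true projection, this identifies $\psi(x(t))$ with $P_{\S^3_+}(G(t))$ to the claimed order.

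The main obstacle is the algebraic bookkeeping: the nested continued fraction $w$, the denominators $(2c_4 w + 2c_5 t)^4$ and $(2c_4 w + 2c_5 t)^5$, and the order-$t^7$ corrections $r_{13}, r_{23}, r_0$ all interact, and the required cancellations appear only because of the careful construction of $w$ in Section \ref{sec:slowest} that concentrates the remainder $M(x)^{-1}\nabla \tfrac{1}{2} d^2(\psi(x), E)$ in its first coordinate at order $t^7$. A computer-algebra system, of the sort used in the Singular computation in Example \ref{ex:slowest_curve}, is essentially required to manage the power-series expansions and confirm the identities.
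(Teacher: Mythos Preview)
Your strategy differs from the paper's and, as written, leaves one step unjustified.

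The paper does not guess $x(t)$ and verify. It first establishes a structural decomposition (Lemma~\ref{lemma:rank1}):
\[
G(t)=\hat w\,v v^T + \text{(explicit correction of size $O(t^4)$)} + O(t^8),\qquad v=(1,\ t/w,\ -g_{13}/w)^T.
\]
From this it reads off the leading eigenpair to order $t^8$ (Lemma~\ref{lemma:approxeig}): $\tilde\lambda=\hat w\|v+\delta\|^2+O(t^8)$, $\tilde v=v+\delta+O(t^8)$ with an explicit $\delta=O(t^6)$. Since Lemma~\ref{lemma:det} forces rank $1$, the projection is $\hat w(v+\delta)(v+\delta)^T+O(t^8)$, and subtracting the decomposition of $G(t)$ yields the correction matrix in~(\ref{eq:ratPS}) directly. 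All the cancellations you anticipate are packaged into two identities proved once in Lemma~\ref{lemma:rank1}, namely $2c_4g_{13}-2c_5g_{23}=t^2/w-h+O(t^7)$ and $g_{23}+tg_{13}/w=-c_5h+O(t^7)$; no computer algebra is used.

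Your verification route can be made rigorous, but the clause ``this identifies $\psi(x(t))$ with $P_{\S^3_+}(G(t))$ to the claimed order'' needs an argument. Knowing that the true projection is $\psi(y(t))$ for some analytic $y(t)$, both $x(t)$ and $y(t)$ are approximate solutions of $F_k(x,t):=\langle\partial_k\psi(x),\psi(x)-G(t)\rangle=0$; you must invoke the implicit function theorem via the Jacobian $(\partial F_k/\partial x_j)|_{(x_*,0)}=(\langle\partial_k\psi(x_*),\partial_j\psi(x_*)\rangle)=\mathrm{diag}(1,2,2)$ to conclude $x(t)=y(t)+O(t^8)$. (This is \emph{not} the matrix $M(x_*)$ of Theorem~\ref{thm:analytic_det}, which is tied to a specific orthogonal basis and need not be invertible here.) With that stability step added your outline is valid, but it replaces the paper's short eigenvector computation by a heavier symbolic check that you yourself say requires a CAS, so it buys generality of method at the cost of insight into why the formula holds.
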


\begin{thm}
\label{thm:ratPEPS}
For sufficiently small $t>0$, we have
\begin{align}
    P_E\circ P_{\S^3_+}(G(t)) 
& = G\left(t - \frac{t^7}{4c_4^4\|B_1\|^2}\right) +  O(t^8).\label{eq:ratPEPS}
\end{align}

\end{thm}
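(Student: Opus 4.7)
The plan is to use Theorem~\ref{thm:ratPS} to write $P_{\S^3_+}(G(t))$ as $G(t)$ plus an explicit correction $R(t)$, then apply $P_E$ and compare the result with a Taylor expansion of $G$ about $t$.

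Since $G(t) = \phi(g(t)) \in E$, the affine projection $P_E$ fixes $G(t)$, and writing $E' = \Span\{B_1, B_2, B_3\}$ for the underlying linear subspace we have $P_E(X) = G(t) + P_{E'}(X - G(t))$ for every $X \in \S^3$. Combined with Theorem~\ref{thm:ratPS} this gives
\begin{equation*}
P_E \circ P_{\S^3_+}(G(t)) = G(t) + P_{E'}(R(t)) + O(t^8).
\end{equation*}
Writing $P_{E'}(R(t)) = \sum_i \alpha_i B_i$ (the basis $B_1, B_2, B_3$ in $(\ref{basis})$ is not orthogonal in general), the coefficients $\alpha_i$ are determined by the normal equations with Gram matrix $\bigl(\langle B_i, B_j\rangle\bigr)$ and right-hand side $\bigl(\langle R(t), B_i\rangle\bigr)_{i=1}^3$.

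The technical heart of the proof is the identity
\begin{equation*}
\langle R(t), B_i\rangle = -\frac{t^7}{4 c_4^4 \|B_1\|^2}\,\langle B_1, B_i\rangle + O(t^8),\qquad i = 1, 2, 3,
\end{equation*}
obtained by direct expansion of the entries in $(\ref{eq:ratPS})$ against those of $(\ref{basis})$. The delicate point is that $h = \Theta(t^6)$ appears in three entries of $R(t)$ with coefficients $c_4, 1, c_5$ in positions $(1,3), (2,2), (2,3)$ respectively, and these dominant contributions must cancel in $\langle R, B_2\rangle$ and $\langle R, B_3\rangle$; the surviving $O(t^7)$ corrections in $R_{22}$ and $R_{23}$ are engineered so that after the cancellation the right-hand side is proportional to the first column of the Gram matrix. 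Once this identity holds, the normal equations yield $\alpha = -\frac{t^7}{4 c_4^4 \|B_1\|^2}\,e_1 + O(t^8)$, so
\begin{equation*}
P_{E'}(R(t)) = -\frac{t^7}{4 c_4^4 \|B_1\|^2}\,B_1 + O(t^8).
\end{equation*}

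To conclude, set $\varepsilon = -t^7/(4 c_4^4 \|B_1\|^2) = O(t^7)$. From $(\ref{eq:slowest_g})$ the functions $g_{13}(t)$ and $g_{23}(t)$ vanish to orders $2$ and $3$ respectively at $t = 0$, so $g_{13}'(t) = O(t)$ and $g_{23}'(t) = O(t^2)$. Linearity of $\phi$ in $p$ and a first-order Taylor expansion give
\begin{equation*}
G(t + \varepsilon) - G(t) = \varepsilon B_1 + \bigl(g_{13}(t+\varepsilon) - g_{13}(t)\bigr) B_2 + \bigl(g_{23}(t+\varepsilon) - g_{23}(t)\bigr) B_3 = \varepsilon B_1 + O(t^8).
\end{equation*}
Comparing with the formula for $P_{E'}(R(t))$ yields the claimed equality. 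The main obstacle is establishing the three inner-product identities above: although the final form is clean, the intermediate arithmetic requires careful tracking of the $h$-cancellation and of the $t^7$ coefficients involving $\langle B_2, B_1\rangle$ and $\langle B_3, B_1\rangle$, which have to conspire to reproduce exactly the first column of the Gram matrix.
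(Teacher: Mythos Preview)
Your approach is essentially identical to the paper's: both write $P_E\circ P_{\S^3_+}(G(t)) = G(t) + (B_1,B_2,B_3)H^{-1}(\langle B_i,D(t)\rangle)_i$ with $H$ the Gram matrix, compute the three inner products $\langle B_i,D(t)\rangle$ from Theorem~\ref{thm:ratPS}, recognize the resulting vector as $-\tfrac{t^7}{4c_4^4\|B_1\|^2}$ times the first column of $H$, and conclude that only a $B_1$-shift survives. Your description of the $h$-cancellations in $\langle B_2,D\rangle$ and $\langle B_3,D\rangle$ is exactly what happens in the paper's computation, and your final Taylor step $G(t+\varepsilon)=G(t)+\varepsilon B_1+O(t^8)$ is in fact a small addition: the paper's written proof stops at $G(t)-\tfrac{t^7}{4c_4^4\|B_1\|^2}B_1+O(t^8)$ and leaves that last identification implicit.
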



\subsection{Proof of the formulas}
\label{sec:proof}
We use four lemmas to prove the formula.
Recall that the rational function $w$ in $(\ref{eq:w})$ is used to define $g_{13}$ and $g_{23}$ as in $(\ref{eq:slowest_g})$.
We start with the lemma that shows that $w$ has the following recursive property.
\begin{lemma}
Let $w$ is the rational function in $(\ref{eq:w})$. Then
\label{lemma:w}
 \[
  w = 1 - 2c_1 t + 2c_2 g_{13} - 2c_3 g_{23} + O(t^6).
 \]
\end{lemma}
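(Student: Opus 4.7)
The strategy is to view $w$ as an approximate fixed point of the rational operator
\[
W(x) = 1 - 2c_1 t + \frac{c_2 t^2}{c_4 x + c_5 t} + \frac{c_3 t^3}{(c_4 x + c_5 t)\, x},
\]
and to check that both $w$ and the right-hand side of the claim agree with $W(w)$ modulo $O(t^6)$.

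First I would substitute $g_{13} = \tilde{g}_{13} + r_{13}$ and $g_{23} = -\tilde{g}_{13}\, t/w + r_{23}$ from (\ref{eq:slowest_g}) into $1 - 2c_1 t + 2c_2 g_{13} - 2c_3 g_{23}$. Since $r_{13} = O(t^7)$, $r_{23} = O(t^6)$, and the explicit $t^6$-correction inside $\tilde{g}_{13}$ contributes only $O(t^6)$ after multiplication by $2c_2$ (respectively after dividing by $w$ and multiplying by $t$ in the definition of $g_{23}$), the expression collapses to
\[
1 - 2c_1 t + \frac{c_2 t^2}{c_4 w + c_5 t} + \frac{c_3 t^3}{(c_4 w + c_5 t)\, w} + O(t^6) = W(w) + O(t^6).
\]

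Next I would parse the closed form (\ref{eq:w}) as
\[
w = 1 - 2c_1 t + \frac{c_2 t^2}{c_4 w'' + c_5 t} + \frac{c_3 t^3}{(c_4 w' + c_5 t)\, w'},
\]
where $w' = 1 - 2c_1 t + c_2 t^2/c_4$ and $w'' = 1 - 2c_1 t + c_2 t^2/(c_4 w_0 + c_5 t) + c_3 t^3/c_4$ with $w_0 = 1 - 2c_1 t$. Subtracting term by term,
\[
W(w) - w = c_2 t^2\, \frac{c_4(w'' - w)}{(c_4 w + c_5 t)(c_4 w'' + c_5 t)} + c_3 t^3\, \frac{(w' - w)\bigl[c_4(w + w') + c_5 t\bigr]}{(c_4 w + c_5 t)(c_4 w' + c_5 t)\, w\, w'},
\]
so it is enough to show $w - w'' = O(t^4)$ and $w - w' = O(t^3)$. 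The first bound follows because $w'' - w_0 = O(t^2)$ forces the $c_2 t^2$-difference to be $O(t^4)$, while $(c_4 w' + c_5 t) w' = c_4 + O(t)$ makes $c_3 t^3/[(c_4 w' + c_5 t) w'] - c_3 t^3/c_4 = O(t^4)$. The second bound uses $1 - w'' = 2c_1 t + O(t^2)$, which implies $c_2 t^2/(c_4 w'' + c_5 t) - c_2 t^2/c_4 = O(t^3)$, so the $c_2 t^2/c_4$ piece in $w'$ and the leading piece of the $c_2 t^2$-term in $w$ cancel to that order.

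Combining the two steps gives $w = 1 - 2c_1 t + 2c_2 g_{13} - 2c_3 g_{23} + O(t^6)$. The main obstacle is the bookkeeping of orders in the nested rational expression (\ref{eq:w}): one must expand each denominator one step beyond the naive $O(1)$ estimate to detect the $t^3$- and $t^4$-cancellations. The underlying conceptual point is the contraction property $W(x) - W(y) = O(t^2)(x - y)$ evaluated at the specific truncations $w'$ and $w''$ that are built into the definition of $w$.
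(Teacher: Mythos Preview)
Your proposal is correct and follows essentially the same approach as the paper: both arguments reduce to showing that replacing $w$ by its low-order truncations inside the nested fractions of (\ref{eq:w}) only perturbs the result at order $t^6$. Your $w'$ and $w''$ are exactly the paper's $w_2$ and $w_3$, and the paper's one-line Taylor-expansion step is precisely your estimate $w-w'=O(t^3)$, $w-w''=O(t^4)$; your packaging via the operator $W$ and the contraction observation $W(x)-W(y)=O(t^2)(x-y)$ is a clean way to say the same thing.
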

\begin{proof}
 Let $w_i$ be the sum of the terms of $w$ with the degree less than or equal to $i$. Then 
\begin{align*}
  w_2 & = 1 - 2 c_1 t + \frac{c_2}{c_4}t^2,\\
 w_3 & = 1 - 2 c_1 t + \frac{c_2}{c_4(1 - 2c_1 t) + c_5 t}t^2 + \frac{c_3}{c_4}t^3.
\end{align*}
By considering the Taylor expansion, we see that
\begin{align*}
 1 - 2c_1 t + 2c_2 g_{13} - 2c_3 g_{23}
& = 1 - 2c_1 t + \frac{2 c_2}{2 c_4 w_3 + 2 c_5 t}\,t^2
  -\frac{2c_3}{(2 c_4 w_2 +  2c_5 t)w_2}\,t^3 + O(t^6)\\
& = w + O(t^6).
\end{align*}
\end{proof}

Here, we write $g_{13} = g_{13}(w)$ and $g_{23} = g_{23}(w)$ to specify $w$ in their definitions.
Let 
\[
\hw := 1 - 2c_1 t + 2c_2 g_{13}(w) - 2c_3 g_{23}(w).
\] 
By Lemma $\ref{lemma:w}$, $\hw$ equals to $w$ up to degree $5$.
By the similar argument to the proof of Lemma $\ref{lemma:w}$, we have $g_{13}(\hw) = g_{13}(w) + O(t^8)$, $g_{23}(\hw) = g_{23}(w) + O(t^9)$ and hence
\[
 \hw = 1 - 2c_1 t + 2c_2 g_{13}(\hw) - 2c_3 g_{23}(\hw) + O(t^8).
\]
Thus, if $\hw$ is used in the definition $(5)$ of $G(t)$ instead of $w$, then $G(t)$ does not change up to degree $7$.
Since the following arguments only treat equations up to degree $7$ except for Lemma $\ref{lemma:det}$, we can replace $w$ with $\hw$. To be precise, we keep using $w$ and $\hw$ separately, but the reader may consider $w$ as $\hw$.\\

\noindent\textbf{Decomposition.}
We will investigate the basic structure of the matrix $G(t)$, such as a decomposition of the low degree terms, the first eigenvalue, and the determinant.
First, we will show that $G(t)$ is obtained by perturbing a rank $1$ matrix
with higher order terms. Recall that $\displaystyle h = {{2t^6}\over{(2c_4 w + 2c_5 t)^4w}}$.
\begin{lemma}
\label{lemma:rank1}
\[
 G(t) = 
\hw\begin{pmatrix}
 1 \\
 \frac{t}{w} \\
 \frac{-g_{13}}{w}
\end{pmatrix}
\begin{pmatrix}
 1 &  \frac{t}{w} &
 \frac{-g_{13}}{w}
\end{pmatrix} + 
\begin{pmatrix}  
  0 & 0 & 0 \\
  0 & \displaystyle -h + \frac{\langle B_2,B_1\rangle}{8\,c_4^5\,\|B_1\|^2}t^7 & \displaystyle g_{23} + \frac{tg_{13}}{w} \\[2ex]
  0 & \displaystyle g_{23} + \frac{tg_{13}}{w} & \displaystyle-\frac{g_{13}^2}{w} 
 \end{pmatrix} + O(t^8).
\]
In addition, 
\[
 \begin{pmatrix}  
  0 & 0 & 0 \\
  0 & -h + \frac{\langle B_2,B_1\rangle}{8\,c_4^5\,\|B_1\|^2}t^7 &  g_{23} + \frac{tg_{13}}{w} \\
  0 & g_{23} + \frac{tg_{13}}{w} & -\frac{g_{13}^2}{w} 
 \end{pmatrix}
= \begin{pmatrix}
 0 & 0 & 0 \\
 0 & O(t^6) & O(t^6) \\
 0 & O(t^6) & O(t^4) 
\end{pmatrix}.
\]
\end{lemma}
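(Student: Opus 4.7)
The plan is to verify the decomposition entrywise, working modulo $O(t^8)$. Writing $v = (1, t/w, -g_{13}/w)^T$, the rank-1 matrix $\hw vv^T$ is rational in $t, c_i, w$ with six distinct entries. Using $\hw = w + O(t^6)$ (consequence of Lemma~\ref{lemma:w}) together with $g_{13} = O(t^2)$, I can directly check that the $(1,1), (1,2), (1,3)$ residuals of $G(t) - \hw vv^T$ vanish modulo the stated error: the $(1,1)$ entry uses $G_{11} = \hw$ (up to $O(t^8)$ by the definition of $\hw$), and for the two off-diagonal entries in the first row, the estimate $\hw/w = 1 + O(t^6)$ combined with the $O(t)$ or $O(t^2)$ size of the other factor supplies the bound. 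The $(2,3)$ and $(3,3)$ residuals reduce immediately to $g_{23} + tg_{13}/w$ and $-g_{13}^2/w$, respectively, with negligible remainders.

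The main and only substantive step is the $(2,2)$ residual, where I must establish
\[
 2c_4 g_{13} - 2c_5 g_{23} - \frac{t^2}{w} = -h + \frac{\langle B_2,B_1\rangle}{8c_4^5\,\|B_1\|^2}t^7 + O(t^8).
\]
Substituting $g_{13} = \tg_{13} + r_{13}$ and $g_{23} = -\tg_{13}\,t/w + r_{23}$ and grouping yields $\frac{2c_4 w + 2c_5 t}{w}\,\tg_{13} + 2c_4 r_{13} - 2c_5 r_{23}$. The explicit form of $\tg_{13}$ makes the first piece equal to $\frac{t^2}{w} - \frac{2(2c_5^2+1)t^6}{(2c_4 w + 2c_5 t)^4 w}$, so after subtracting $t^2/w$ the $O(t^6)$ remainder combines with the leading $t^6$-contribution $\frac{4c_5^2 t^6}{(2c_4 w + 2c_5 t)^4 w}$ of $-2c_5 r_{23}$ to collapse to exactly $-h$. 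The remaining $t^7$ pieces from $r_{13}$ and from $r_0 t^7$ in $r_{23}$ then combine, via the engineered value of $r_0$, to precisely $\frac{\langle B_2,B_1\rangle}{8c_4^5\,\|B_1\|^2}t^7$.

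For the order-bound statement (the second displayed identity), everything follows by inspection: $-h + O(t^7) = O(t^6)$ since $c_4 \neq 0$ keeps the denominator of $h$ nonvanishing at $t=0$; $g_{23} + tg_{13}/w = r_{23} + tr_{13}/w$ is $O(t^6)$ because $r_{23}$ has leading term of order $t^6$ and $tr_{13}/w = O(t^8)$; and $g_{13}^2/w = O(t^4)$ because $g_{13} = O(t^2)$. I expect the main obstacle to be the $(2,2)$ calculation: the rational correction terms $r_{13}, r_{23}$ and the constant $r_0$ are engineered precisely so that the generic $O(t^6)$ pieces collapse to the clean $-h$ term plus the stated $t^7$ remainder involving $\langle B_2, B_1\rangle$. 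The manipulation is long but purely algebraic, and the key to keeping it manageable is to cancel the $O(t^6)$ contributions first and only afterwards to match the $O(t^7)$ terms.
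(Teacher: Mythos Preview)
Your approach is essentially the paper's: the $(2,2)$ computation via the split $g_{13}=\tg_{13}+r_{13}$, $g_{23}=-\tg_{13}t/w+r_{23}$, collapsing the $t^6$ pieces to $-h$ and then matching the $t^7$ remainder to $\frac{\langle B_2,B_1\rangle}{8c_4^5\|B_1\|^2}t^7$, is exactly how the paper derives its equation~(\ref{eq:U22}), and the order bounds in the second display follow just as you say. One slip to fix: for the $(1,2)$ residual, $\hw/w=1+O(t^6)$ together with the factor $t$ yields only $O(t^7)$, not $O(t^8)$; the paper handles this entry through the convention, stated just before the lemma, that one ``may consider $w$ as $\hw$'', which makes that residual vanish identically (and similarly renders the $(1,3)$ entry exact rather than merely $O(t^8)$).
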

\begin{proof}
Recall $\displaystyle \tg_{13} =  \frac{1}{2 c_4 w + 2 c_5 t}\,t^2
- \frac{2(2c_5^2 + 1)}{(2 c_4 w + 2 c_5 t)^5}\,t^6$. Let $\tg_{23} = -\frac{t\tg_{13}}{w}$.
Then we have
\begin{align*}
  2c_4\tg_{13}-2c_5\tg_{23} = 2c_4\tg_{13}+2c_5\frac{t\tg_{13}}{w}
= \frac{2c_4 w + 2c_5 t}{w}\tg_{13}
& = \frac{t^2}{w} 
- \frac{2(2c_5^2 + 1)t^6}{(2 c_4 w + 2 c_5 t)^4w} \\
& = \frac{t^2}{w} - (2c_5^2 + 1)h.
\end{align*}
In addition, 
\[
2c_4 r_{13} - 2c_5 r_{23}
= \frac{2c_4\langle B_2,B_1\rangle}{16\,c_4^6\,\|B_1\|^2}t^7
+ \frac{4c_5^2t^6}{(2c_4 w + 2c_5 t)^4 w} 
= \frac{\langle B_2,B_1\rangle}{8\,c_4^5\,\|B_1\|^2}t^7
 + 2c_5^2h.
\]
Since $g_{23} = \tg_{23} + r_{23}$, we obtain
\begin{equation}
 2c_4g_{13}-2c_5g_{23} 
%
%
= 
\frac{t^2}{w} 
+ \frac{\langle B_2,B_1\rangle}{8\,c_4^5\,\|B_1\|^2}t^7 - h.\label{eq:U22} 
\end{equation}
Since 
$\hw \frac{t^2}{w^2} = \frac{t^2}{w} + O(t^8)$, 
we have
\[
 G(t) -
\hw\begin{pmatrix}
 1 \\
 \frac{t}{w} \\[1ex]
 \frac{-g_{13}}{w}
\end{pmatrix}
\begin{pmatrix}
 1 &  \frac{t}{w} &
 \frac{-g_{13}}{w}
\end{pmatrix}
= \begin{pmatrix}  
  0 & 0 & 0 \\
  0 
  & -h
+ \frac{\langle B_2,B_1\rangle}{8\,c_4^5\,\|B_1\|^2}t^7
  & \displaystyle g_{23} + \frac{tg_{13}}{w} \\[2ex]
  0 
  & \displaystyle g_{23} + \frac{tg_{13}}{w} & \displaystyle -\frac{g_{13}^2}{w} 
 \end{pmatrix} + O(t^8).
\] 
In addition, 
\begin{align}
 g_{23} & = -\frac{tg_{13}}{w}
+ \left(\frac{c_4\langle B_3,B_1\rangle + c_5\langle B_2,B_1\rangle}{8c_4^5\|B_1\|^2} + \frac{1}{8c_4^3}\right)t^7 
 - c_5 h \label{eq:G23}\\
& = -\frac{tg_{13}}{w}
+ \frac{c_4\langle B_3,B_1\rangle + c_5\langle B_2,B_1\rangle}{8c_4^5\|B_1\|^2}t^7 
 + (-c_5  + c_4 t)h + O(t^8). \notag
\end{align}
In the last equality, we use $\frac{1}{8c_4^3}t^7 = c_4 t h = O(t^8)$.
Thus we have 
\begin{equation}
 g_{23} + \frac{t g_{13}}{w} = 
 -c_5h + O(t^7) = O(t^6).\label{eq:G2313}
\end{equation}
\end{proof}

\noindent\textbf{The first eigenvalue and eigenvector.} Using the decomposition of the low degree term, we obtain the first eigenvalue and the associated eigenvector of $G(t)$ up to degree $7$.
\begin{lemma}
\label{lemma:approxeig}
Let 
\[
 v =  
\begin{pmatrix}
 1\\
\displaystyle \frac{t}{\hw}\\[2ex]
\displaystyle \frac{-g_{13}}{w}
\end{pmatrix},\ 
\delta
= 
\frac{1}{w\|v\|^2}
\begin{pmatrix}
 0 \\
\displaystyle -\frac{th}{w}\\[2ex]
\displaystyle 
 c_4h
\end{pmatrix}.
\]
Then the first eigenvalue $\tlambda$ and the associated eigenvector $\tv$ of $G(t)$ can be written as
\[
\tlambda = \hw\|v + \delta\|^2 + O(t^8),\ \tv = v + \delta + O(t^8).
\]
\end{lemma}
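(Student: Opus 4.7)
The plan is to combine the rank-one decomposition supplied by Lemma~\ref{lemma:rank1} with a direct verification that $(v+\delta,\,\hat{w}\|v+\delta\|^{2})$ is an approximate eigenpair of $G(t)$ with residual $O(t^{8})$, and then to upgrade ``approximate'' to ``correct up to $O(t^{8})$'' using the spectral gap of $G(t)$. Concretely, Lemma~\ref{lemma:rank1} writes $G(t) = \hat{w}\,v v^{T} + R + O(t^{8})$, where $R$ is the symmetric residual whose only nonzero entries lie in the lower-right $2\times 2$ block and are of orders $O(t^{6})$, $O(t^{6})$, $O(t^{4})$. The rank-one summand $\hat{w}\,vv^{T}$ has one nonzero eigenvalue $\hat{w}\|v\|^{2} = 1+O(t)$ (eigenvector $v$) and a two-dimensional kernel, so its largest eigenvalue is separated from the rest by a constant gap; the perturbation $R$ has norm $O(t^{4})$, hence the largest eigenpair of $G(t)$ is simple and deforms smoothly from $v$.

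The central step is the identity
\[
 Rv \;=\; \hat{w}\|v\|^{2}\,\delta \;+\; O(t^{8}).
\]
Once this is established, applying $G(t)$ to $v+\delta$ and expanding yields
\[
 G(t)(v+\delta) \;=\; \hat{w}\|v\|^{2}v + Rv + \hat{w}(v^{T}\delta)v + R\delta + O(t^{8})
 \;=\; \hat{w}\|v+\delta\|^{2}\,(v+\delta) \;+\; O(t^{8}),
\]
because $\delta = O(t^{6})$ forces $\|\delta\|^{2}=O(t^{12})$, $R\delta=O(t^{10})$, and $v^{T}\delta=O(t^{8})$, so $\hat{w}\|v+\delta\|^{2}=\hat{w}\|v\|^{2}+O(t^{8})$ and all cross terms are absorbed into $O(t^{8})$.

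To prove $Rv=\hat{w}\|v\|^{2}\delta+O(t^{8})$ I work componentwise. For the second component, $R_{22}(t/\hat{w}) + R_{23}(-g_{13}/w)$ reduces, using $(\ref{eq:G2313})$ and the order bookkeeping $g_{13}/w=O(t^{2})$, to $-th/\hat{w}+O(t^{8})$, which matches $\hat{w}\|v\|^{2}\delta_{2}$ after invoking $\hat{w}\equiv w \pmod{t^{6}}$. For the third component, $R_{32}(t/\hat{w}) + R_{33}(-g_{13}/w) = -c_{5}ht/\hat{w} + g_{13}^{3}/w^{2} + O(t^{8})$, and one needs the rational identity
\[
 \frac{g_{13}^{3}}{w^{2}} \;=\; c_{4}h \;+\; \frac{c_{5}\,t^{7}}{8\,c_{4}^{4}} \;+\; O(t^{8}),
\]
which I obtain by substituting $g_{13} = t^{2}/(2c_{4}w+2c_{5}t) + O(t^{6})$ and telescoping
$\tfrac{1}{w^{2}(2c_{4}w+2c_{5}t)^{3}} - \tfrac{2c_{4}}{(2c_{4}w+2c_{5}t)^{4}w} = \tfrac{2c_{5}t}{w^{2}(2c_{4}w+2c_{5}t)^{4}}$. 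The $t^{7}$ contribution from this identity cancels exactly against $-c_{5}ht/\hat{w} = -c_{5}t^{7}/(8c_{4}^{4}) + O(t^{8})$, leaving $(Rv)_{3} = c_{4}h + O(t^{8}) = \hat{w}\|v\|^{2}\delta_{3} + O(t^{8})$.

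Finally, because the largest eigenvalue of $G(t)$ is isolated by a gap of order $1$ (the remaining eigenvalues are $O(t^{4})$, as is visible from the structure of $R$), the standard symmetric-matrix perturbation bound (if $\|Ax-\mu x\|\le\varepsilon$ with $\mu$ separated by gap $\gamma$, then there is an eigenpair $(\lambda,y)$ of $A$ with $|\lambda-\mu|\le\varepsilon/\|x\|$ and an appropriately normalized $\|y-x\|\le\varepsilon/\gamma$) converts the $O(t^{8})$ residual into $\tlambda=\hat{w}\|v+\delta\|^{2}+O(t^{8})$ and $\tv=v+\delta+O(t^{8})$, which is the claim. I expect the main obstacle to be the rational identity for $g_{13}^{3}/w^{2}$ and the bookkeeping that ensures every error term introduced by replacing $w$ with $\hat{w}$ and by using leading-order forms of $g_{13},g_{23},h$ is genuinely $O(t^{8})$; there is no shortcut through abstract perturbation theory because the cancellation at order $t^{7}$ relies on the specific rational structure of $g_{13}$ and $h$.
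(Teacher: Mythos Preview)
Your argument is correct and follows essentially the same line as the paper's proof: decompose $G(t)$ via Lemma~\ref{lemma:rank1}, compute the action of the residual block $R$ on $v$ using the rational identity $g_{13}^{3}/w^{2}=(c_{4}w+c_{5}t)h/w$ (your telescoping identity is an equivalent reformulation), observe the $O(t^{7})$ cancellation in the third component, and conclude that $(v+\delta,\,\hat w\|v+\delta\|^{2})$ satisfies the eigenpair equation with residual $O(t^{8})$.

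The only point of departure is the final upgrade from ``approximate eigenpair'' to ``true eigenpair up to $O(t^{8})$.'' You invoke the standard residual/Davis--Kahan bound via the spectral gap, which is clean and valid (the other two eigenvalues are indeed $O(t^{4})$ by Weyl's inequality applied to $\hat w\,vv^{T}+R$). The paper instead writes $\tilde v = v+\delta+(0,r_{2},r_{3})^{T}$ and $\tilde\lambda = \hat w\|v+\delta\|^{2}+\epsilon$, substitutes into the exact eigenvalue equation, and solves the resulting $3\times 3$ linear system componentwise to extract $r_{2},r_{3},\epsilon=O(t^{8})$ directly, exploiting the explicit orders of the entries of $G(t)$. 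Your route is shorter and more conceptual; the paper's is fully self-contained and avoids any appeal to abstract perturbation lemmas (in particular it sidesteps having to verify that the eigenvector normalization $\tilde v_{1}=1$ is compatible with the generic $\sin\theta$ bound). Either way the substance is the same.
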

\begin{proof}
Since $v =(1, O(t), O(t^2))^T$, Lemma $\ref{lemma:rank1}$ implies that
\begin{align*}
 G(t)v = \hw\|v\|^2v + 
\begin{pmatrix}
 0 \\
\displaystyle -\frac{th}{w}\\[2ex]
\displaystyle 
\frac{tg_{23}}{w} + \frac{t^2g_{13}}{w^2}
+ \frac{g_{13}^3}{w^2}
\end{pmatrix} + O(t^8).
\end{align*}
Here, by
$h = {{2t^6}\over{(2c_4 w + 2c_5 t)^4w}}$, we see that
\[
\frac{g_{13}^3}{w^2}
= \frac{t^6}{(2c_4w + 2c_5 t)^3w^2}
= \frac{c_4w + c_5t}{w}h.
\]
In addition, since 
$g_{23} + \frac{tg_{13}}{w} = -c_5h + O(t^7)$ as in $(\ref{eq:G2313})$, we have
\begin{align*}
  \frac{tg_{23}}{w} + \frac{t^2g_{13}}{w^2}
+ \frac{g_{13}^3}{w^2}
 = \frac{t}{w}\left(g_{23} + \frac{tg_{13}}{w}\right) + \frac{g_{13}^3}{w^2}
& = \frac{-c_5th}{w} 
+ \frac{c_4w + c_5t}{w}h + O(t^8)\\
& = c_4 h + O(t^8).
\end{align*}
Thus we obtain
\[
G(t)v = \hw\|v\|^2v + (0, -\frac{th}{w}, c_4h)^T + O(t^8)
= \hw\|v\|^2v + \hw\|v\|^2\delta + O(t^8).
\]
Since $v = (1,O(t), O(t^2))$, $\delta = (0, O(t^7), O(t^6))^T$ and
$
G(t) = \left(
\begin{smallmatrix}
O(1) & * & * \\
O(t) & O(t^2) & * \\
O(t^2) & O(t^3) & 0
\end{smallmatrix}
\right)
$
, we have
\begin{equation}
   G(t)(v + \delta)  
   = G(t)v + O(t^8) 
= \hw\|v + \delta\|^2(v + \delta) + O(t^8)\label{eq:approxeig}.
\end{equation}
Let $\tlambda(t)$ and $\tv(t)$ be the first eigenvalue and the associated eigenvector of $G(t)$. Then $\tlambda(0) = 1$, and we may assume $\tv(0) = (1,0,0)^T$.
Let $\epsilon = \tlambda - \hw\|v + \delta\|^2$ and
$(0,r_1,r_2)^T = \tv - (v + \delta)$.
Then we have
\[
G(t)(v + \delta + (0,r_1,r_2)^T) = (\hw\|v+\delta\|^2 + \epsilon)(v + \delta + (0,r_1,r_2)^T).
\]
Thus the equation $(\ref{eq:approxeig})$
implies that
\[
\hw\|v + \delta\|^2
\begin{pmatrix}
 0 \\
 r_2 \\
 r_3 
\end{pmatrix}
+ \epsilon (v+\delta) 
+ \epsilon
\begin{pmatrix}
 0 \\
 r_2 \\
 r_3
\end{pmatrix} + O(t^8)
= G(t)
\begin{pmatrix}
 0 \\
 r_2 \\
 r_3 
\end{pmatrix} = 
\begin{pmatrix}
 t r_2 + O(t^2) r_3 \\
 O(t^2) r_2 + O(t^3) r_3 \\
 O(t^3) r_2
\end{pmatrix}.
\]
Since $v+\delta=(1,O(t),O(t^2))$, we have $\epsilon = tr_2 + O(t^2) r_3 + O(t^8)$.
By $\hw\|v + \delta\|^2= 1 + O(t)$, we obtain
\[
\begin{pmatrix}
 r_2 \\
 r_3 
\end{pmatrix}
+
 \begin{pmatrix}
 O(t^2) r_2 + O(t^3) r_3 \\
O(t^3) r_2 + O(t^4) r_3
\end{pmatrix}
+
O(t)
\begin{pmatrix}
 r_2 \\
 r_3 
\end{pmatrix}
 + O(t^8)
= 
\begin{pmatrix}
 O(t^2) r_2 + O(t^3) r_3 \\
 O(t^3) r_2
\end{pmatrix}.
\]
By the second equality, we see that $r_3 = O(t^3) r_2 + O(t^8)$.
Thus the first equality gives 
$r_2 = O(t^8)$. Then we obtain $r_3 = O(t^8)$ and $\epsilon = O(t^8)$.
Therefore, we complete the proof.
\end{proof}

\noindent\textbf{Determinant.}
When we calculate $P_{\S^3_+}(U)$ for $U$ sufficiently close to $U_*$, we have to consider two cases; $P_{\S^3_+}(U)$ is rank $2$ or rank $1$.
Suppose that $\lambda_1 \geq \lambda_2 \geq\lambda_3$ are the eigenvalues of $U$
and $v_1,v_2,v_3$ are the associated eigenvectors of $U$ respectively.
Since $U$ is sufficiently close to $U_*$, we see that $\lambda_1$ is close to $1$ and $\lambda_3<0$.
If $\lambda_1,\lambda_2>0$, then 
$P_{\S^3_+}(U) = \lambda_1 \frac{v_1v_1^T}{\|v_1\|^2} + \lambda_2 \frac{v_2v_2^T}{\|v_2\|^2}$. If $\lambda_1>0$ and $\lambda_2 \leq 0$, then
$P_{\S^3_+}(U) = \lambda_1 \frac{v_1v_1^T}{\|v_1\|^2}$.
Thus the rank of $P_{\S^3_+}(U)$ is determined by the sign of $\det U$.
We show that the determinant of $G(t)$ is positive for $t$ sufficiently close $0$
under higher order perturbations. Then we have that $P_{\S^3_+}(G(t))$ has rank $1$.

\begin{lemma}
\label{lemma:det}
\[
 \det \left(G(t) + 
\begin{pmatrix}
 O(t^7) & O(t^7) & O(t^7) \\
 O(t^7) & O(t^7) & O(t^7) \\
 O(t^7) & O(t^7) & 0
\end{pmatrix}
\right) = \frac{t^{10}}{32 c_4^6} + O(t^{11}).
\] 
In particular, $P_{\S^3_+}(G(t))$ has rank $1$ for $t$ sufficiently close to $0$.
\end{lemma}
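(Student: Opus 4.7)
The plan is to compute $\det G(t)$ directly, then bound the effect of the $O(t^7)$ perturbation by first-order Taylor expansion of the determinant, and finally deduce the rank-one conclusion from the sign.

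Expanding $\det G(t)$ along the third row (using $G_{33}(t) = 0$) gives
\[
\det G(t) = -\hw\, g_{23}^2 - 2tg_{13} g_{23} - 2c_4 g_{13}^3 + 2c_5 g_{13}^2 g_{23}.
\]
Substituting $g_{23} = -tg_{13}/w + r$ with $r := g_{23} + tg_{13}/w = -c_5 h + O(t^7)$ (equation (\ref{eq:G2313})), the cross terms in $r$ combine into $2tg_{13} r(\hw - w)/w = O(t^{15})$ by Lemma \ref{lemma:w}, and the leading part reduces to $t^2 g_{13}^2(2w - \hw)/w^2 = t^2 g_{13}^2/w + O(t^{12})$. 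Regrouping yields
\[
\det G(t) = \frac{g_{13}^2}{w}\bigl[t^2 - (2c_4 w + 2c_5 t) g_{13}\bigr] + 2c_5 g_{13}^2 r + O(t^{12}).
\]
The formula (\ref{eq:slowest_g}) for $\tg_{13}$ forces the bracket to equal $\frac{2(2c_5^2 + 1) t^6}{(2c_4 w + 2c_5 t)^4} + O(t^7)$, contributing $\frac{2(2c_5^2 + 1) t^{10}}{(2c_4 w + 2c_5 t)^6 w} + O(t^{11})$, while $2c_5 g_{13}^2 r$ contributes $-\frac{4c_5^2 t^{10}}{(2c_4 w + 2c_5 t)^6 w} + O(t^{11})$. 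The two sum to $\frac{2 t^{10}}{(2c_4 w + 2c_5 t)^6 w} + O(t^{11}) = \frac{t^{10}}{32 c_4^6} + O(t^{11})$.

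For the perturbation, the first-order Taylor expansion $\det(G(t) + P) = \det G(t) + \sum_{(i,j) \neq (3,3)} C_{ij}(G(t)) P_{ij} + O(\|P\|^2)$ reduces matters to bounding cofactors (the $(3,3)$ cofactor drops out since $P_{33} = 0$). A direct calculation shows the cofactor $C_{22}(G(t)) = -g_{13}^2$ is of order $O(t^4)$, while the other cofactors $C_{ij}$ with $(i,j) \neq (3,3)$ are of order $O(t^5)$ or smaller (the apparent lower-order leading terms in $C_{13}, C_{23}$ cancel due to the specific form of $\tg_{13}$). Thus every $|C_{ij} P_{ij}| = O(t^{11})$, and the second-order Taylor remainder is $O(\|P\|^2) = O(t^{14})$, giving the full claim. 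Taking $P = 0$ yields $\det G(t) > 0$ for small $t > 0$, so the eigenvalues of $G(t)$ are either all positive or exactly one positive; the former is excluded because $G_{33}(t) = 0$ with $G_{13}(t) = -g_{13} \neq 0$, and a PSD matrix with a zero diagonal entry must have that row and column identically zero. Hence $P_{\S^3_+}(G(t))$ has rank one.

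\textbf{Main obstacle.} The principal technical challenge is the exact cancellation $2(2c_5^2 + 1) - 4c_5^2 = 2$ producing the $c_5$-independent leading coefficient $\frac{1}{32 c_4^6}$. This is not coincidence: the $O(t^6)$ corrections in $\tg_{13}$ and $r_{23}$ from (\ref{eq:slowest_g}) are precisely engineered so that their combined contribution at order $t^{10}$ is universal. Tracking this while confirming that the other error terms ($r_{13} = O(t^7)$, $\hw - w = O(t^6)$, the cross terms in the $g_{23}$ expansion, etc.) all contribute only to the $O(t^{11})$ remainder requires careful bookkeeping of orders throughout.
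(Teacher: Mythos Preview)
Your proof is correct but takes a different route from the paper's. The paper computes the perturbed determinant in one pass: it expands $\det(G(t)+O(t^7))$ along the third column, keeps the $O(t^7)$ terms attached to each entry throughout, and then performs elementary row operations inside the $2\times 2$ minors using the identities $2c_4g_{13}-2c_5g_{23}=t^2/w-h+O(t^7)$ and $g_{23}+tg_{13}/w=O(t^6)$ to reach $hg_{13}^2+O(t^{11})$ directly. You instead decouple the two tasks: first you compute $\det G(t)$ via an explicit algebraic factorisation $\frac{g_{13}^2}{w}[t^2-(2c_4w+2c_5t)g_{13}]+2c_5g_{13}^2r+O(t^{12})$, and then you control the perturbation through the Jacobi formula, bounding each cofactor $C_{ij}(G(t))$ separately. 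Your approach makes the role of the perturbation more transparent and isolates where the ``dangerous'' cofactor $C_{23}$ needs the cancellation $\hw g_{23}+tg_{13}=O(t^6)$ (without it one would only get $O(t^{10})$); the paper's approach is more streamlined since the same cancellations are used once rather than twice. One small imprecision: the cancellation in $C_{23}$ you attribute to ``the specific form of $\tg_{13}$'' actually comes from the relation $g_{23}=-t\tg_{13}/w+r_{23}$ together with Lemma~\ref{lemma:w} ($\hw=w+O(t^6)$), not from the higher-order correction in $\tg_{13}$ itself. Your final rank-one argument via $G_{33}=0,\ G_{13}\neq 0$ is a nice self-contained alternative to the paper's reference to the eigenvalue discussion preceding the lemma.
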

\begin{proof}
Recall that $g_{13} = O(t^2),\ g_{23} = O(t^3)$, and that $2c_4g_{13}-2c_5g_{23} = \frac{t^2}{w} - h + O(t^7)$ and
$\frac{tg_{13}}{w} + g_{23} = O(t^6)$ by $(\ref{eq:U22})$ and $(\ref{eq:G2313})$. Then we have
 \begin{align*}
&  \det (G(t) + O(t^7)) \\
& = 
\begin{vmatrix}
\hw + O(t^7) & t + O(t^7)& -g_{13} + O(t^7) \\ 
t + O(t^7) & 2c_4g_{13}-2c_5g_{23} + O(t^7) & g_{23} + O(t^7)\\ 
-g_{13} + O(t^7) & g_{23} + O(t^7) & 0 
\end{vmatrix}\\
& = (-g_{13} + O(t^7))
\begin{vmatrix}
  t + O(t^7) & -g_{13} + O(t^7)\\ 
 \frac{t^2}{w} - h + O(t^7) & g_{23} + O(t^7)
\end{vmatrix}
- (g_{23} + O(t^7))
\begin{vmatrix}
 w + O(t^7) & - g_{13} + O(t^7)\\
t + O(t^7) & g_{23} + O(t^7)
\end{vmatrix}\\
& = 
\begin{vmatrix}
 -t g_{13} - w g_{23} + O(t^{8}) & - g_{13} + O(t^7)\\
-\frac{t^2 g_{13}}{w} + h g_{13} - tg_{23} + O(t^{9})  & g_{23} + O(t^7)
\end{vmatrix}\\
& = \begin{vmatrix}
 -t g_{13} + wg_{23} & - g_{13} + O(t^7)\\
-\frac{t^2 g_{13}}{w} - tg_{23} & g_{23} + O(t^7)
\end{vmatrix}
+ 
\begin{vmatrix}
 O(t^8) & - g_{13} + O(t^7)\\
 h g_{13} + O(t^9) & g_{23} + O(t^7)
\end{vmatrix} \\
& = \begin{vmatrix}
 -w\left(\frac{t g_{13}}{w} + g_{23}\right) & - g_{13} \\
-t\left(\frac{t g_{13}}{w} + g_{23}\right) & g_{23}
\end{vmatrix}
+ 
\begin{vmatrix}
 0 & - g_{13} \\
 h g_{13} & g_{23}
\end{vmatrix} + O(t^{11})\\
& = 
\left(g_{23} + \frac{t}{w}g_{13}\right)(-tg_{13} - wg_{23})
 + hg_{13}^2 + O(t^{11})\\
& = hg_{13}^2 + O(t^{11})  
= \frac{2t^{10}}{(2c_4 w + 2c_5 t)^6 w} + O(t^{11})
= \frac{t^{10}}{32 c_4^6} + O(t^{11}).
 \end{align*}
\end{proof}


Finally, we show the Theorem $\ref{thm:ratPS}$ and $\ref{thm:ratPEPS}$.
\begin{proof}
[Proof of Theorem $\ref{thm:ratPS}$]
By Lemma $\ref{lemma:det}$, we have $\det G(t)>0$. Then $P_{\S^3_+}(G(t))$ has rank $1$ and Lemma $\ref{lemma:approxeig}$ implies that
\[
 P_{\S^3_+}(G(t)) = \tlambda \frac{\tv \tv^T}{\|\tv\|^2}
= \hw\|v + \delta\|^2 \frac{(v+\delta)(v+\delta)^T}{\|v+\delta\|^2} + O(t^8)
= \hw(v + \delta)(v + \delta)^T + O(t^8).
\]
Here we have, for $\delta=(0,\delta_2,\delta_3)$,
\begin{align*}
&   (v + \delta)(v + \delta)^T \\
& = v v^T + \delta v^T + v \delta v^T + \delta \delta^T\\
 & =  v v^T + 
\begin{pmatrix}
 0 & 0 & 0 \\
 \delta_2 & 0 & 0 \\
 \delta_3 & t\delta_3 & 0 
\end{pmatrix} 
+ 
\begin{pmatrix}
 0 & \delta_2 & \delta_3 \\
 0 & 0 & t\delta_3 \\
 0 & 0 & 0 
\end{pmatrix} + O(t^8) 
  =  v v^T + 
\begin{pmatrix}
 0 & \delta_2 & \delta_3 \\
 \delta_2 & 0 & t\delta_3 \\
 \delta_3 & t\delta_3 & 0 
\end{pmatrix} + O(t^8).
\end{align*}
 Since $\|v\|^2 = 1 + O(t^2)$, we obtain
\begin{align*}
&  P_{\S^3_+}(G(t)) \\
& = \hw v v^T + w
\begin{pmatrix}
 0 & \delta_2 & \delta_3 \\
 \delta_2 & 0 & t\delta_3 \\
 \delta_3 & t\delta_3 & 0 
\end{pmatrix} + O(t^8)
 = \hw v v^T + 
\begin{pmatrix}
 0 & -\frac{th}{w} & c_4 h\\
 -\frac{th}{w} & 0 & c_4 t h\\
  c_4h & 
c_4 t h & 0 
\end{pmatrix} + O(t^8)
\end{align*}
By Lemma $\ref{lemma:rank1}$, we have
$\hw v v^T = G(t) - 
\begin{pmatrix}  
  0 & 0 & 0 \\
  0 & 
-h + \frac{\langle B_2,B_1\rangle}{8\,c_4^5\,\|B_1\|^2}t^7
& g_{23} + \frac{tg_{13}}{w} \\
  0 & g_{23} + \frac{tg_{13}}{w} & -\frac{g_{13}^2}{w} 
 \end{pmatrix}$ and hence
\[
 P_{\S^3_+}(G(t)) = G(t) +
\begin{pmatrix}
 0 &\displaystyle -\frac{th}{w} 
& c_4h\\[1ex]
\displaystyle -\frac{th}{w} 
& \displaystyle h - \frac{\langle B_2,B_1\rangle}{8\,c_4^5\,\|B_1\|^2}t^7
&\displaystyle - g_{23} - \frac{tg_{13}}{w} +c_4 th \\[2ex]
 c_4 h &\displaystyle -g_{23} - \frac{tg_{13}}{w} + c_4 th & \displaystyle
\frac{g_{13}^2}{w}
\end{pmatrix} + O(t^8) 
\]
Here the equation $(\ref{eq:G23})$ implies
\begin{align*}
 - g_{23} - \frac{tg_{13}}{w} + c_4 h
= - \frac{c_4\langle B_3,B_1\rangle + c_5\langle B_2,B_1\rangle}{8c_4^5\|B_1\|^2}t^7 + c_5 h + O(t^8).
\end{align*}
This gives the equation $(\ref{eq:ratPS})$.
\end{proof}

\begin{proof}[Proof of Theorem $\ref{thm:ratPEPS}$]
Let $P_E(X) = U_* + s_1B_1 + s_2B_2 + s_3 B_3$. 
Then 
$(s_1,s_2,s_3)$ satisfies
\[
  \begin{pmatrix}
  \|B_1\|^2 & \langle B_1,B_2\rangle & \langle B_1, B_3 \rangle \\
  \langle B_2, B_1 \rangle  & \|B_2\|^2 & \langle B_2, B_3 \rangle \\
  \langle B_3, B_1 \rangle  & \langle B_3,B_2\rangle & \|B_3\|^2
 \end{pmatrix}
\begin{pmatrix}
 s_1\\
s_2\\
s_3
\end{pmatrix}
= 
\begin{pmatrix}
 \langle B_1, X - U_* \rangle \\
 \langle B_2, X - U_* \rangle \\
 \langle B_3, X - U_* \rangle 
\end{pmatrix}.
\]
Let $H$ be the coefficient matrix of the left hand side and 
$D(t) = P_{\S^3_+}(G(t)) - G(t)$.
Then we have
\[
  P_E(X) 
 = 
\begin{pmatrix}
  B_1 & B_2 & B_3
 \end{pmatrix}
 H^{-1}
\begin{pmatrix}
 \langle B_1, X \rangle \\
 \langle B_2, X \rangle \\
 \langle B_3, X \rangle 
\end{pmatrix}
+ U_* - 
\begin{pmatrix}
  B_1 & B_2 & B_3
 \end{pmatrix}
H^{-1}
\begin{pmatrix}
 \langle B_1, U_* \rangle \\
 \langle B_2, U_* \rangle \\
 \langle B_3, U_* \rangle 
\end{pmatrix},
\]
and hence
\begin{align*}
P_E\circ P_{\S^3_+}(G(t))  = P_E((G + D)(t)) 
& = P_E(G(t)) + 
\begin{pmatrix}
  B_1 & B_2 & B_3
 \end{pmatrix} H^{-1}
\begin{pmatrix}
 \langle B_1, D(t) \rangle \\
 \langle B_2, D(t) \rangle \\
 \langle B_3, D(t) \rangle 
\end{pmatrix}\\
& = G(t) + 
\begin{pmatrix}
  B_1 & B_2 & B_3
 \end{pmatrix} H^{-1}
\begin{pmatrix}
 \langle B_1, D(t) \rangle \\
 \langle B_2, D(t) \rangle \\
 \langle B_3, D(t) \rangle 
\end{pmatrix}.
\end{align*}
By Theorem $\ref{thm:ratPS}$, we have
\begin{align*}
  \begin{pmatrix}
 \langle B_1, D(t) \rangle \\
 \langle B_2, D(t) \rangle \\
 \langle B_3, D(t) \rangle 
\end{pmatrix}
 = 
\begin{pmatrix}
\displaystyle -\frac{2th}{w}\\[2ex]
\displaystyle - 2c_4\frac{\langle B_2,B_1\rangle}{8\,c_4^5\,\|B_1\|^2}t^7\\[2ex]
\displaystyle -\frac{\langle B_3,B_1\rangle}{4c_4^4\|B_1\|^2}t^7 
\end{pmatrix}  + O(t^8) 
&  = 
\begin{pmatrix}
\displaystyle -\frac{t^7}{4c_4^4} \\[2ex]
\displaystyle-\frac{\langle B_2,B_1\rangle}{4c_4^4\|B_1\|^2}t^7 \\[2ex]
\displaystyle-\frac{\langle B_3,B_1\rangle}{4c_4^4\|B_1\|^2}t^7 
\end{pmatrix} + O(t^8)\\
= -\frac{t^7}{4c_4^4\|B_1\|^2}
\begin{pmatrix}
 \|B_1\|^2 \\
\langle B_2,B_1 \rangle\\
\langle B_3,B_1 \rangle
\end{pmatrix} + O(t^8).
\end{align*}
By the Cramel's rule, we obtain
\[
P_E\circ P_{\S^3_+}((G(t))) = G(t) -\frac{t^7}{4c_4^4\|B_1\|^2}B_1 + O(t^8)
= G(t) -\frac{t^7}{8c_4^4(2c_1^2 + 1)}B_1 + O(t^8).
\]
\end{proof}

\section{Application to convergence analysis}
\label{sec:rate}
If the singularity degree of the intersection of $\S^n_+$ and a plane is $2$,
then an upper bound for the convergence rate is given as $O(k^{-1/6})$. However, as shown in Example $\ref{ex:linear}$, an upper bound based on the singularity degree is far from tight in general. 

Throughout this section, we consider $\S^3_+$ and a Type $2$ plane $E$ with $c_4\neq 0$.
Using rational formulas, 
we have the following theorem, which 
shows that the upper bound based on the singularity degree  is actually tight for alternating projections for $E$ and $\S^3_+$. 
\begin{thm}
\label{thm:rate}
For sufficiently small $t>0$, let $U_0 = G(t)$ in $(\ref{eq:slowest})$ be the initial point 
and construct the AP sequence $U_1, U_2, \ldots$ for $E$ and $\S^3_+$.
 Then $\displaystyle\|U_k - U_*\| = \Theta\left(k^{-\frac{1}{6}}\right)$. Moreover,
\[
 \lim_{k\to \infty}\left(\frac{3}{32c_4^4(2c_1^2 + 1)^4}\right)^{\frac{1}{6}}k^{\frac{1}{6}}\|U_k - U_*\| = 1.
\]
\end{thm}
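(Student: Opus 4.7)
\textbf{Proof plan for Theorem \ref{thm:rate}.}
The plan is to use Theorem \ref{thm:ratPEPS} as the one–step template and reduce the entire analysis to a scalar recursion in the parameter $t$. Starting from $U_0=G(t_0)$, one step of $P_E\circ P_{\S^3_+}$ yields $G(\tau(t_0))+O(t_0^8)$, where
\[
\tau(t)=t-\frac{t^7}{4c_4^4\|B_1\|^2}.
\]
To iterate, I would argue that each $U_k$ can be written in the form $U_k = G(t_k)+r_k$ with parameters $t_k$ satisfying $t_{k+1}=\tau(t_k)$ and with a remainder $r_k$ that is negligible relative to $t_k$. Combining nonexpansiveness of $P_E\circ P_{\S^3_+}$ with Theorem \ref{thm:ratPEPS} gives $\|r_{k+1}\|\leq\|r_k\|+C t_k^8$; since, by Theorem \ref{thm:ratPEPS}, $t_k$ will be shown below to decay like $k^{-1/6}$, the series $\sum_k t_k^8$ converges, and a bootstrap shows $\|r_k\|=o(t_k)$ for sufficiently small $t_0$. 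A sharper version of this stability can also be obtained from the exact formula $\tp = p-M(x)^{-1}\nabla\tfrac12 d^2(\psi(x),E)$ of Theorem \ref{thm:analytic_det} together with the fact (demonstrated in Example \ref{ex:slowest_curve}) that $g(t)=(t,g_{13}(t),g_{23}(t))$ is the direction of slowest displacement; in every direction transverse to $g$, the step is of strictly lower order, so the orbit is attracted to the curve.

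Once this stability is in place, the analysis reduces to the scalar recursion $t_{k+1}=t_k - c\,t_k^7 + O(t_k^8)$ with $c=1/(4c_4^4\|B_1\|^2)$. I would substitute $a_k = t_k^{-6}$ and Taylor-expand:
\[
a_{k+1}
= t_k^{-6}\bigl(1-c t_k^6+O(t_k^7)\bigr)^{-6}
= a_k + 6c + O(t_k).
\]
Summing from $0$ to $k-1$ gives $a_k = a_0 + 6ck + \sum_{j<k}O(t_j)$. Since $t_j=O(j^{-1/6})$, the error term is $O(k^{5/6})$, which is negligible compared to $6ck$. Therefore
\[
a_k = 6ck\bigl(1+o(1)\bigr),\qquad t_k = (6ck)^{-1/6}\bigl(1+o(1)\bigr).
\]

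Finally I would convert back to the ambient norm. From the block form of $G(t)$ and $g_{13}(t)=O(t^2)$, $g_{23}(t)=O(t^3)$,
\[
\|G(t)-U_*\| = t\,\|B_1\| + O(t^2),
\]
so $\|U_k-U_*\|=\|B_1\|\,t_k\,(1+O(t_k))$. Combining with $t_k\sim(6ck)^{-1/6}$ yields $\|U_k-U_*\|=\Theta(k^{-1/6})$ and
\[
\lim_{k\to\infty}k^{1/6}\|U_k-U_*\| = \|B_1\|\,(6c)^{-1/6}.
\]
Substituting $c=1/(4c_4^4\|B_1\|^2)$ and $\|B_1\|^2=2(2c_1^2+1)$ gives
\[
\|B_1\|^6(6c)^{-1}
= \frac{2c_4^4\|B_1\|^8}{3}
= \frac{32c_4^4(2c_1^2+1)^4}{3},
\]
which is precisely the reciprocal of the constant in the stated limit, so the claim follows by taking sixth roots.

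The main obstacle is clearly the stability claim that $U_k$ stays on the curve $G$ up to a remainder of lower order than $t_k$: naive nonexpansiveness alone gives only a cumulative error of size comparable to $t_0^2$, whereas $t_k$ itself shrinks to zero, so a finer contraction argument (exploiting that $g$ is the slowest direction and that the normal directions are contracted by a factor bounded away from $1$, as is visible from $M(x)^{-1}\nabla \tfrac12 d^2(\psi(x),E)$ in Theorem \ref{thm:analytic_det}) is needed. Once the stability is secured, the remaining computation is the routine asymptotic analysis of $t_{k+1}=t_k-c t_k^7+O(t_k^8)$ sketched above.
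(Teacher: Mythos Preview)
Your overall architecture matches the paper's proof exactly: reduce to a scalar recursion $t_{k+1}=t_k-ct_k^7+O(t_k^8)$ with $c=1/(4c_4^4\|B_1\|^2)$, analyze it to get $t_k\sim(6ck)^{-1/6}$, and convert via $\|G(t)-U_*\|=\|B_1\|t+O(t^2)$. Your asymptotic analysis of the scalar recursion and the final constant computation are both correct and equivalent to the paper's Lemma~\ref{lemma:basic}.

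The gap you flag is real, and your naive attempt does fail for the reason you yourself state: from nonexpansiveness one only gets $\|r_{k+1}\|\le\|r_k\|+Ct_k^8$, hence $\|r_k\|\le C\sum_j t_j^8$, which is a \emph{positive constant} while $t_k\to 0$; no bootstrap rescues $\|r_k\|=o(t_k)$ from this. The paper's resolution is more structured than ``contraction in normal directions'' in the abstract: it writes the deviation from the curve not as an unstructured remainder $r_k$ but in the scaled form $\beta_k t_k^7 C_2+\gamma_k t_k^7 C_3$ (with $C_1,C_2,C_3$ the Gram--Schmidt orthogonalization of $B_1,B_2,B_3$), and proves that one AP step sends this to $\tilde\beta\,\tilde t^{\,7}C_2+\tilde\gamma\,\tilde t^{\,7}C_3$ with $\|\tilde\beta C_2+\tilde\gamma C_3\|\le\|R\|_2\,\|\beta C_2+\gamma C_3\|$ for an explicit $2\times 2$ symmetric matrix $R$ with $\|R\|_2<1$ (Lemma~\ref{lemma:pertR}). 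The point is that the transverse error is kept at order $t^7$ throughout, with only the \emph{coefficients} $(\beta_k,\gamma_k)$ tracked and shown to stay in a fixed ball; the tangential component of the $O(t^8)$ discrepancy is absorbed into $t_k$, which now satisfies a two-sided inequality rather than an exact recursion (Proposition~\ref{prop:curve_nbh}). Establishing this requires a first-order eigenpair perturbation (Lemma~\ref{lemma:perteig}), the resulting formulas for $P_{\S^3_+}$ and $P_E$ under the degree-$7$ perturbation (Lemmas~\ref{lemma:pertPS3+}, \ref{lemma:pertPE}), and then the eigenvalue estimate $\|R\|_2<1$. Your pointer to Theorem~\ref{thm:analytic_det} is in the right spirit, but the paper does not use that formula here; it works directly with the eigen-decomposition of $G(t)+H(t)$, and the strict inequality $\|R\|_2<1$ relies on the concrete block structure of $B_1,B_2,B_3$ (in particular $c_4\neq 0$), not on a generic transversality principle.
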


To prove this theorem, we use the following lemmas, which deal with the first eigenvalue and rational expressions for projections of a matrix obtained by perturbing $G(t)$ with terms of degree $7$.

By applying the Gram-Schmidt process to $B_1,B_2,B_3$ in this order,
we obtain an orthogonal basis of $E$ and denote it by $C_1, C_2, C_3$. 
Note that $C_1 = B_1$.
Let $\lambda$ be the first eigenvalue of $G(t)$ and $v(t)$ be the 
the associated eigenvector with $v_1(t) = 1$. By Lemma $\ref{lemma:approxeig}$, we see that
\[
 \lambda(t) = 1 + O(t),\quad 
v(t) = 
\left(1, O(t), O(t^2)\right)^T.
\]
Let $H(t) = \beta t^7 C_2 + \gamma t^7 C_3$ and 
$\text{{\small
$\begin{pmatrix}
 \eta_1(t) & \eta_2(t) & \eta_3(t)\\
\eta_2(t) & \eta_4(t) & \eta_5(t)\\
\eta_3(t) & \eta_5(t) & \eta_6(t)
\end{pmatrix}$}}
= H(t).
$
\begin{lemma}
\label{lemma:perteig}
 Let $\tlambda$ be the first eigenvalue of $(G + H)(t)$ and 
$\tv(t)=(\tv_1(t),\tv_2(t),\tv_3(t))$ be the associated eigenvector with $\tv_1(t)=1$.
Then
\begin{align*}
\tlambda(t)  & = \lambda + \eta_1(t) + O(t^8),\quad 
\tv(t)  = v(t) + 
\begin{pmatrix}
 0 \\
\eta_2(t) + O(t^8)\\
\eta_3(t) + O(t^8)
\end{pmatrix} 
\end{align*}
\end{lemma}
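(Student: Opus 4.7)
The plan is to imitate the strategy used in the proof of Lemma~\ref{lemma:approxeig}. Write $\tv = v + r$ with $r = (0, r_2, r_3)^T$ (permissible since both eigenvectors are normalized so that the first coordinate equals $1$) and $\tlambda = \lambda + \epsilon$. Substituting into $(G+H)\tv = \tlambda \tv$ and using $Gv = \lambda v$, the eigenvalue equation becomes
\[
(G - \lambda I)\, r \;=\; \epsilon v - Hv + \epsilon r - Hr.
\]
The goal is to show this system forces $\epsilon = \eta_1 + O(t^8)$ and $(r_2, r_3) = (\eta_2, \eta_3) + O(t^8)$.

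The input bounds needed are straightforward. Since $v = (1, O(t), O(t^2))^T$ and $H = O(t^7)$ entrywise, we have $Hv = (\eta_1, \eta_2, \eta_3)^T + O(t^8)$; once one knows $r = O(t^7)$, both $Hr$ and $\epsilon r$ are $O(t^{14})$. From Lemma~\ref{lemma:approxeig}, $\lambda = \hw + O(t^2)$, and the entries of $G(t)$ given in the proof of Lemma~\ref{lemma:rank1} yield
\[
G - \lambda I = \begin{pmatrix} O(t) & t & O(t^2) \\ t & -1 + O(t) & O(t^3) \\ O(t^2) & O(t^3) & -1 + O(t) \end{pmatrix}.
\]

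With these sizes in hand, I would treat the three scalar equations as an implicit-function / contraction problem in $(\epsilon, r_2, r_3)$. The second and third equations have dominant linear parts $-r_2 = -\eta_2$ and $-r_3 = -\eta_3$ (the remaining terms on the left are $O(t) r_2 + O(t^3) r_3$, and those on the right, namely $\epsilon v_j$, $(Hr)_j$, and the tail of $(Hv)_j$, are $O(t^8)$), so the $2\times 2$ block is invertible and the solution satisfies $r_j = \eta_j + O(t^8)$ for $j = 2, 3$. Substituting into the first equation, whose left-hand side is $t r_2 - g_{13} r_3 = O(t^8)$ and whose right-hand side is $\epsilon - \eta_1 + O(t^8)$, then yields $\epsilon = \eta_1 + O(t^8)$.

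The main obstacle is the careful bookkeeping of the $O(t^8)$ remainders: one must confirm that every cross contribution (from $\epsilon r$, $Hr$, the off-diagonal couplings in $(G - \lambda I) r$, and the higher components of $Hv$) enters only at order $t^8$ or smaller, and that the residual errors in $\lambda$ and $v$ themselves (each known only up to $O(t^8)$ by Lemma~\ref{lemma:approxeig}) do not propagate to spoil the conclusion. Once this bookkeeping is organized, the asserted expansions follow directly.
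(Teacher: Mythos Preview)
Your approach is correct but takes a different route from the paper. You mimic the perturbation argument of Lemma~\ref{lemma:approxeig}: write $\tv=v+r$, $\tlambda=\lambda+\epsilon$, derive the linear system $(G-\lambda I)r=\epsilon v-Hv+\epsilon r-Hr$, and extract the leading behaviour from the $2\times2$ diagonal block $-I+O(t)$. This works, but note that the step ``once one knows $r=O(t^7)$'' is not automatic: the right-hand side contains $\epsilon v_j$ and $O(t)r_j$ terms, so you need a short bootstrap (or a genuine contraction/implicit-function argument) to climb from the trivial bound $r,\epsilon=O(t)$ up to $O(t^7)$ before the final identification $r_j=\eta_j+O(t^8)$, $\epsilon=\eta_1+O(t^8)$ goes through. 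You flag this as bookkeeping, and indeed a finite iteration of the three equations does the job.

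The paper instead exploits analyticity directly. It expands $G,\lambda,v$ into homogeneous parts and observes that the degree-$n$ recursion
\[
(\lambda_n,\,v_{n,2},\,v_{n,3})^T \;=\; G_1v_{n-1}+\cdots+G_nv_0-(\lambda_1v_{n-1}+\cdots+\lambda_{n-1}v_1)
\]
determines $(\lambda_n,v_{n,2},v_{n,3})$ purely from $G_0,\dots,G_n$ and the eigenparts of degree $<n$. Since $G+H$ agrees with $G$ through degree~$6$, the expansions of $(\tlambda,\tv)$ and $(\lambda,v)$ coincide through degree~$6$; at degree~$7$ the only new contribution is $Hv_0=(\eta_1,\eta_2,\eta_3)^T$. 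This sidesteps any bootstrap and gives the $O(t^8)$ remainder for free from the Taylor structure. Your method is closer in spirit to Lemma~\ref{lemma:approxeig} and is perhaps more robust if analyticity were not available; the paper's method is shorter here because it leverages the exact recursive determinacy of the Taylor coefficients.
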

\begin{proof}
 Since $\lambda$ and $\tlambda$ are simple eigenvalues of $G(t)$ and $(G + H)(t)$ respectively, both eigenvalues and associated eigenvectors are analytic functions in $t$. Let $i$th homogeneous parts of $G, \lambda, v$ be $G_i,\lambda_i,v_i$ respectively. Then these are decomposed as
\[
  G = G_0 + G_1 + \cdots,\quad \lambda = \lambda_0 + \lambda_1 +
 \cdots,\quad \tv = v_0 + v_1 + \cdots.
\]
Note that $G_0 = 
\left(
\begin{smallmatrix}
 1 & 0 & 0\\
 0 & 0 & 0\\
 0 & 0 & 0
\end{smallmatrix}
\right),\ 
\lambda_0 = 1,\ v_{0,1} = 1,\ v_{i,1} = 0\ (i = 1, 2, \ldots)$.
Then we have
\[
  (G_0 + G_1 + \cdots)(v_0 + v_1 + \cdots) = (\lambda_0 + \lambda_1 + \cdots)(v_0 + v_1 + \cdots).
\]
Let $I$ be the identity matrix. By comparing the terms of degree $n$, we obtain
 \begin{align}
  G_0 v_n + G_1 v_{n-1} + \cdots + G_n v_{0}
 & = \lambda_0 v_n + \lambda_1 v_{n-1} + \cdots + \lambda_n v_0,\notag \\
 \lambda_n v_0 + (\lambda_0 I - G_0)v_n
 & = G_1 v_{n-1} + \cdots + G_n v_0 - (\lambda_1 v_{n-1} + \cdots
 \lambda_{n-1}v_1), \notag \\
  \begin{pmatrix}
    \lambda_n\\
    v_{n,2} \\
    v_{n,3}
   \end{pmatrix}
 & = G_1 v_{n-1} + \cdots + G_n v_0 - (\lambda_1 v_{n-1} + \cdots \lambda_{n-1}v_1). \label{eq:powereig}
 \end{align}
Since the right hand side of $(\ref{eq:powereig})$ has only the terms of the eigenvalues and eigenvectors 
of degree less than or equal to $n-1$, $\lambda_n$ and $v_n$ are determined iteratively by the lower degree parts and the parts of $G(t)$ of degree less than or equal to $n$.

Next, we consider the eigenpair $\tlambda$ and $\tv$ of $G+H$. 
Since $H$ consists of homogeneous polynomials of degree $7$,
we see that $G+H$ coincides with $G$ up to degree $6$. Thus $\tlambda$ and $\tv$ satisfy the equation $(\ref{eq:powereig})$ for $n \leq 6$ and 
\begin{align*}
  \begin{pmatrix}
    \tlambda_7\\
    \tv_{7,2} \\
    \tv_{7,3}
   \end{pmatrix}
& = G_1 v_{6} + \cdots + (G_7 + H) v_0 - (\lambda_1 v_{6} + \cdots \lambda_{6}v_1)\\
& = 
\begin{pmatrix}
 \lambda_7\\
v_{7,2}\\
v_{7,3}
\end{pmatrix} + H v_0
= \begin{pmatrix}
 \lambda_7 + \eta_1\\
v_{7,2} + \eta_2\\
v_{7,3} + \eta_3
\end{pmatrix}.
\end{align*}
\end{proof}
By using the expressions of the eigenpair of $G+H$, 
the following lemma shows that the projections of $G$ and $G+H$ onto $\S^3_+$ coincide up to degree $6$.
\begin{lemma}
\label{lemma:pertPS3+}
If $G(t),\ (G+H)(t)$ are mapped to rank $1$ matrices by  $P_{\S^3_+}$, then
\[
 P_{\S^3_+}((G+H)(t)) 
 =  P_{\S^3_+}(G(t)) 
+ \begin{pmatrix}
   \eta_1 & \eta_2 & \eta_3 \\
   \eta_2 & 0 & 0 \\
   \eta_3 & 0 & 0
  \end{pmatrix} + O(t^8)
\]
\end{lemma}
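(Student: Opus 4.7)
The plan is to express both projections as rank-one matrices built from their largest eigenpairs, then expand the difference in powers of $t$ using the bookkeeping provided by Lemma \ref{lemma:perteig}.

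Since both $P_{\S^3_+}(G(t))$ and $P_{\S^3_+}((G+H)(t))$ are rank one by hypothesis, and their largest eigenvalues are close to $1$ (hence strictly positive), I can write
\[
P_{\S^3_+}(G(t)) = \frac{\lambda}{\|v\|^2}vv^T, \qquad P_{\S^3_+}((G+H)(t)) = \frac{\tlambda}{\|\tv\|^2}\tv\tv^T,
\]
where Lemma \ref{lemma:perteig} supplies $\tlambda = \lambda + \eta_1 + O(t^8)$ and $\tv = v + \delta v$ with $\delta v = (0, \eta_2, \eta_3)^T + O(t^8)$. The size estimates $v = (1, O(t), O(t^2))^T$, $\lambda = 1 + O(t)$, and $\eta_i = O(t^7)$ (because $H$ is homogeneous of degree $7$) will drive every subsequent estimate.

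Next, I would estimate the norm and the relevant outer products. For the norm, $\|\tv\|^2 = \|v\|^2 + 2\langle v,\delta v\rangle + \|\delta v\|^2 = \|v\|^2 + O(t^8)$, because $\langle v,\delta v\rangle = v_2\eta_2 + v_3\eta_3 = O(t^8)$ and $\|\delta v\|^2 = O(t^{14})$. For the outer products,
\[
v\delta v^T + \delta v v^T = \begin{pmatrix} 0 & \eta_2 & \eta_3 \\ \eta_2 & 0 & 0 \\ \eta_3 & 0 & 0 \end{pmatrix} + O(t^8), \qquad \eta_1 vv^T = \begin{pmatrix} \eta_1 & 0 & 0 \\ 0 & 0 & 0 \\ 0 & 0 & 0 \end{pmatrix} + O(t^8),
\]
since every entry outside the first row/column is forced to carry a $v_2$ or $v_3$ factor and is therefore $O(t^8)$.

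Expanding $\tlambda\tv\tv^T = (\lambda + \eta_1)(vv^T + v\delta v^T + \delta v v^T + \delta v \delta v^T) + O(t^8)$ and discarding the terms of order at least $t^8$ (namely $\lambda \delta v \delta v^T$, $\eta_1 v\delta v^T$, $\eta_1 \delta v v^T$, $\eta_1 \delta v \delta v^T$) produces $\lambda vv^T + \lambda(v\delta v^T + \delta v v^T) + \eta_1 vv^T + O(t^8)$. Dividing by $\|\tv\|^2 = \|v\|^2 + O(t^8)$ and using $\lambda/\|v\|^2 = 1 + O(t)$ against the $O(t^7)$ correction blocks yields precisely $P_{\S^3_+}(G(t))$ plus the claimed matrix with entries $\eta_1, \eta_2, \eta_3$, modulo $O(t^8)$.

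The main obstacle is simply careful order-tracking: one must verify that every cross term such as $v_2\eta_2$, $\eta_1 v_3$, or $\eta_i\eta_j$ is absorbed into the $O(t^8)$ remainder. There is no subtle algebraic cancellation; the result is essentially a first-order perturbation statement for a rank-one projection, and the structure of the correction block is inherited directly from the structure of the eigenpair perturbation given in Lemma \ref{lemma:perteig}.
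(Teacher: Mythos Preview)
Your proposal is correct and follows essentially the same approach as the paper: both proofs write the two projections as $\lambda vv^T/\|v\|^2$ and $\tlambda\tv\tv^T/\|\tv\|^2$, invoke Lemma~\ref{lemma:perteig} for the eigenpair perturbation, expand the outer product $(v+\teta)(v+\teta)^T$, and use the size estimates $v=(1,O(t),O(t^2))^T$, $\lambda=1+O(t)$, $\eta_i=O(t^7)$ to push every cross term into the $O(t^8)$ remainder. Your write-up is slightly more explicit about the norm estimate $\|\tv\|^2=\|v\|^2+O(t^8)$ and the separation of the $\eta_1 vv^T$ block, but the argument is the same perturbation bookkeeping.
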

\begin{proof}
 Let $\tlambda$ and $\tv$ be the first eigenvalue and the associated eigenvector of $G+H$ with $\tv_1=1$, and $\teta = (0, \eta_2, \eta_3)^T$. 
Since $\lambda = 1 + O(t),\ v = (1,O(t),O(t^2))^T$ and $\eta_1,\eta_2,\eta_3 = O(t^7)$, Lemma $\ref{lemma:perteig}$ implies that
\begin{align*}
&  P_{\S^3_+}((G+H)(t)) 
 =  \tlambda\frac{\tv \tv^T}{\|\tv\|^2} \\
& = (\lambda + \eta_1)\frac{(v + \teta)(v + \teta)^T}{\|v + \teta\|^2 } + O(t^8)
 = (\lambda + \eta_1)\frac{v v^T + \teta v^T + v \teta^T + \teta \teta^T}{\|v + \teta\|^2 } + O(t^8)\\
& = \frac{(\lambda + \eta_1)}{\|v + \teta\|^2 }
\left(v v^T + 
\begin{pmatrix}
 0 & \eta_2 & \eta_3 \\
 \eta_2 & 0 & 0 \\
\eta_3 & 0 & 0 
\end{pmatrix}\right) + O(t^8)\\
& = \frac{\lambda}{\|v + \teta\|^2 }
v v^T +  + \eta_1 v v^T + 
\begin{pmatrix}
 0 & \eta_2 & \eta_3 \\
 \eta_2 & 0 & 0 \\
\eta_3 & 0 & 0 
\end{pmatrix} + O(t^8)\\
& = \frac{\lambda}{\|v\|^2 }
v v^T +  
\begin{pmatrix}
 \eta_1 & \eta_2 & \eta_3 \\
 \eta_2 & 0 & 0 \\
\eta_3 & 0 & 0 
\end{pmatrix} + O(t^8).
\end{align*}
\end{proof}

Let $\tC_2,\ \tC_3$ be the matrices that are equal to $C_2,\ C_3$ 
except for the first low and column being zero vectors, respectively.
Let $\tP_E(X) = P_E(X) - P_E(O)$. Then $\tP_E$ is the linear part of $P_E$ and we have
\begin{align*}
  P_E\circ P_{\S^3_+}((G+H)(t))
&  = P_E\left(P_{\S^3_+}(G(t)) + 
\begin{pmatrix}
   \eta_1 & \eta_2 & \eta_3 \\
   \eta_2 & 0 & 0 \\
   \eta_3 & 0 & 0 
\end{pmatrix}
+ O(t^8)\right)\\
& = P_E\circ P_{\S^3_+}(G(t))
+ \tP_E
\left(
\begin{pmatrix}
   \eta_1 & \eta_2 & \eta_3 \\
   \eta_2 & 0 & 0 \\
   \eta_3 & 0 & 0 
\end{pmatrix}
\right) + O(t^8).
\end{align*}
A representing matrix for $\tP_E$ is given as follows.
\begin{lemma}
\label{lemma:pertPE}
 \begin{equation}
\tP_E\left(\begin{pmatrix}
   \eta_1 & \eta_2 & \eta_3 \\
   \eta_2 & 0 & 0 \\
   \eta_3 & 0 & 0
  \end{pmatrix}\right)
= 
\begin{pmatrix}
C_2 & C_3
\end{pmatrix}
\begin{pmatrix}
\displaystyle 1 - \frac{\|\tC_2\|^2}{\|C_2\|^2} 
& \displaystyle -\frac{\langle \tC_2, \tC_3\rangle}{\|C_2\|^2} \\[2ex]
 \displaystyle -\frac{\langle \tC_2, \tC_3\rangle}{\|C_3\|^2}
& \displaystyle 1 - \frac{\|\tC_3\|^2}{\|C_3\|^2}
\end{pmatrix}
\begin{pmatrix}
\beta t^7 \\
\gamma t^7
\end{pmatrix}.\label{eq:PEH}
\end{equation}
\end{lemma}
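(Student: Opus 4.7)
The plan is to exploit the linearity of $\tP_E$ together with the orthogonal basis $C_1,C_2,C_3$ of the direction subspace of $E$. Since $\tP_E = P_E - P_E(O)$ is the orthogonal projection onto $\Span(C_1,C_2,C_3)$, one has
\[
\tP_E(M) \;=\; \sum_{i=1}^{3}\frac{\langle C_i,M\rangle}{\|C_i\|^2}\,C_i,
\]
so the task reduces to computing the three Frobenius inner products $\langle C_i,M\rangle$ and verifying that the $i=1$ term drops out.

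The key device is to decompose $\S^3$ into two coordinate subspaces that are orthogonal under the Frobenius inner product: the ``first-row/column'' subspace supported on the indices $\{(1,1),(1,2),(2,1),(1,3),(3,1)\}$, and its complement. For $i=2,3$ set $F_i := C_i - \tC_i$, so $F_i$ lies in the first subspace while $\tC_i$ lies in the second; in particular $\langle F_i,\tC_j\rangle = 0$ for $i,j\in\{2,3\}$. Moreover $C_1 = B_1$, since Gram--Schmidt fixes the first vector, and the explicit form of $B_1$ in $(\ref{basis})$ shows that $C_1$ lies entirely in the first subspace; hence $\langle C_1,\tC_j\rangle = 0$ as well. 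Finally, because $H = \beta t^7 C_2 + \gamma t^7 C_3$ and $M$ is precisely the first-row/column part of $H$, one obtains the clean identity $M = \beta t^7 F_2 + \gamma t^7 F_3$.

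With these identifications the three inner products are mechanical. For $i=1$, Gram--Schmidt orthogonality gives $\langle C_1,C_j\rangle = 0$ and the support observation gives $\langle C_1,\tC_j\rangle = 0$, so $\langle C_1,F_j\rangle = 0$ and therefore $\langle C_1,M\rangle = 0$. For $i=2$, expanding $C_2 = \tC_2 + F_2$ and using Pythagoras yields $\langle C_2,F_2\rangle = \|F_2\|^2 = \|C_2\|^2 - \|\tC_2\|^2$, while $\langle C_2,F_3\rangle = \langle C_2,C_3\rangle - \langle C_2,\tC_3\rangle = -\langle \tC_2,\tC_3\rangle$, using $\langle F_2,\tC_3\rangle = 0$. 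The case $i=3$ is symmetric. Substituting into the formula for $\tP_E(M)$ and factoring the column vector $(\beta t^7,\gamma t^7)^T$ recovers exactly the right-hand side of $(\ref{eq:PEH})$.

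The argument is essentially bookkeeping of Frobenius inner products, so I do not anticipate any real obstacle. The only non-trivial inputs are the disjoint-support identities $\langle F_i,\tC_j\rangle = 0$ and $\langle C_1,\tC_j\rangle = 0$; both are immediate from the explicit form of $B_1$ in $(\ref{basis})$, the fact that Gram--Schmidt preserves the first vector, and the definition of $\tC_i$ as zeroing the first row and column of $C_i$.
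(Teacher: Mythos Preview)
Your proposal is correct and follows essentially the same approach as the paper: both write the matrix as $\beta t^7(C_2-\tC_2)+\gamma t^7(C_3-\tC_3)$, expand $\tP_E$ over the orthogonal basis $C_1,C_2,C_3$, use the explicit form of $C_1=B_1$ to kill the $C_1$ term, and then reduce $\langle C_i,\tC_j\rangle$ to $\langle \tC_i,\tC_j\rangle$ via the disjoint-support observation. Your introduction of the notation $F_i=C_i-\tC_i$ and the explicit two-subspace decomposition makes the orthogonality bookkeeping slightly more transparent, but the argument is the same.
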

\begin{proof}
 Recall that 
$
H(t) = 
 \beta t^7 C_2 + \gamma t^7 C_3 = 
\text{{\scriptsize
$\begin{pmatrix}
 \eta_1 & \eta_2 & \eta_3\\
\eta_2 & \eta_4 & \eta_5\\
\eta_3 & \eta_5 & \eta_6
\end{pmatrix}$}}
$.
Since we can write
$ \text{{\scriptsize
$\begin{pmatrix}
  \eta_1 & \eta_2 & \eta_3 \\
  \eta_2 & 0 & 0 \\
  \eta_3 & 0 & 0
 \end{pmatrix}$}}
= \beta t^7(C_2 - \tC_2) + \gamma t^7 (C_3 - \tC_3)
$,
the orthogonality of $C_1,C_2,C_3$ implies that
\begin{align*}
& \tP_E\left(\begin{pmatrix}
   \eta_1 & \eta_2 & \eta_3 \\
   \eta_2 & 0 & 0 \\
   \eta_3 & 0 & 0
  \end{pmatrix}\right)
 =  \sum_{i=1}^3 \frac{\langle C_i, \beta t^7(C_2 - \tC_2) + \gamma t^7 (C_3 - \tC_3)\rangle}{\|C_i\|^2}C_i\\
 = & \frac{ -\beta t^7 \langle C_1,\tC_2\rangle - \gamma t^7 \langle C_1,\tC_3\rangle}{\|C_1\|^2}C_1
+ \frac{\beta t^7(\|C_2\|^2 - \langle C_2,\tC_2\rangle) 
- \gamma t^7 \langle C_2,\tC_3\rangle}{\|C_2\|^2}C_2\\
& + \frac{-\beta t^7\langle C_3,\tC_2\rangle
+ \gamma t^7(\|C_3\|^2 - \langle C_3,\tC_3\rangle)}{\|C_3\|^2}C_3.
\end{align*}
Since $C_1 = B_1 = \left(
\begin{smallmatrix}
 -2c_1 & 1 & 0 \\
 1 & 0 & 0 \\
 0 & 0 & 0
\end{smallmatrix}
\right)$, we have $\langle C_1,\tC_2\rangle = \langle C_1,\tC_3\rangle = 0$ and thus
\begin{align*}
 \tP_E\left(\begin{pmatrix}
   \eta_1 & \eta_2 & \eta_3 \\
   \eta_2 & 0 & 0 \\
   \eta_3 & 0 & 0
  \end{pmatrix}\right)  & = \left(\left(1 - \frac{\langle C_2,\tC_2\rangle}{\|C_2\|^2}\right) \beta t^7 
- \frac{\langle C_2,\tC_3\rangle}{\|C_2\|^2} \gamma t^7\right) C_2 \\
& + \left(- \frac{\langle C_3,\tC_2\rangle}{\|C_3\|^2}\beta t^7
+ \left(1 - \frac{\langle C_3,\tC_3\rangle}{\|C_3\|^2}\right) \gamma t^7\right) C_3.
\end{align*}
By the definition of $\tC_2$ and $\tC_3$, we have the desired expression.
\end{proof}

Let $c = \frac{1}{4c_4^4\|C_1\|^2}$.
By Theorem $\ref{thm:ratPEPS}$, we have $P_E\circ P_{\S^3_+}(G(t)) = G(t - ct^7) + O(t^8)$ and hence
\begin{align*}
  P_E\circ P_{\S^3_+}((G+H)(t))
& = G(t - ct^7)
+ \tP_E
\left(
\text{{\scriptsize
$\begin{pmatrix}
   \eta_1 & \eta_2 & \eta_3 \\
   \eta_2 & 0 & 0 \\
   \eta_3 & 0 & 0 
\end{pmatrix}$}}
\right) + O(t^8).
\end{align*}
Thus the distance between an $AP$ sequence and the slowest curve
is $N + O(t^8)$, where
\[
N := \left\| \tP_E
\left(
\text{{\scriptsize 
$\begin{pmatrix}
   \eta_1 & \eta_2 & \eta_3 \\
   \eta_2 & 0 & 0 \\
   \eta_3 & 0 & 0 
\end{pmatrix}$}}
\right)\right\|_F.
\]
We will show that $N$ is strictly less than 
\[
  \|H(t)\|_F
= \left\|\beta t^7 C_2 + \gamma t^7 C_3\right\|_F
= \sqrt{(\|C_2\|_F \beta t^7)^2 + (\|C_3\|_F\gamma t^7)^2}.
\]

\begin{lemma}
\label{lemma:pertR}
 Let 
 \[
  R = 
\begin{pmatrix}
 \displaystyle 1 - \frac{\|\tC_2\|^2}{\|C_2\|^2} 
& \displaystyle -\frac{\langle \tC_2, \tC_3\rangle}{\|C_2\|\|C_3\|} \\[2ex]
 \displaystyle -\frac{\langle \tC_2, \tC_3\rangle}{\|C_2\|\|C_3\|}
& \displaystyle 1 - \frac{\|\tC_3\|^2}{\|C_3\|^2}
\end{pmatrix}.
 \]
Then we have $\|R\|_2 < 1$ and
\[
 N \leq \|R\|_2\cdot \|H(t)\|_F.
\]
\end{lemma}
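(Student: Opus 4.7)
The plan is to split the lemma into its two claims and handle each via a short argument.

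The inequality $N \leq \|R\|_2 \|H(t)\|_F$ arises from a rescaling that symmetrizes the $2\times 2$ matrix in (\ref{eq:PEH}). Call that matrix $M$, and write $\tP_E(\cdot) = C_2\,p + C_3\,q$ with $(p,q)^T = M(\beta t^7,\gamma t^7)^T$. By orthogonality of $C_2, C_3$, we have $N^2 = \|C_2\|^2 p^2 + \|C_3\|^2 q^2$. Setting $x := \|C_2\|\beta t^7$ and $y := \|C_3\|\gamma t^7$, so that $\|H(t)\|_F^2 = x^2 + y^2$, one checks directly that $\|C_2\|\,p = R_{11}x + R_{12}y$ and $\|C_3\|\,q = R_{21}x + R_{22}y$: the asymmetric denominators $\|C_2\|^2,\|C_3\|^2$ of $M$ become the common factor $\|C_2\|\|C_3\|$ in $R_{12}=R_{21}$. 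Hence $N = \|R(x,y)^T\|_2 \leq \|R\|_2\,\|H(t)\|_F$.

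For the spectral bound, $R$ is symmetric with eigenvalues
\[
 \lambda_\pm = \tfrac{1}{2}(R_{11}+R_{22}) \pm \sqrt{\tfrac{1}{4}(R_{11}-R_{22})^2 + R_{12}^2},
\]
so $\|R\|_2 = \max\{|\lambda_+|,|\lambda_-|\}$. A direct expansion of $(1 - \tfrac{1}{2}(R_{11}+R_{22}))^2 - \tfrac14(R_{11}-R_{22})^2 - R_{12}^2$ (and its analogue with $+1$) reduces the conditions $\lambda_+<1$ and $\lambda_->-1$ to the two inequalities $R_{12}^2 < (1-R_{11})(1-R_{22})$ and $R_{12}^2 < (1+R_{11})(1+R_{22})$. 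Since $R_{11}, R_{22} \in [0,1]$, the second is implied by the first, and the first is exactly the strict form of the Cauchy--Schwarz inequality applied to $\tC_2, \tC_3$.

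The main obstacle is establishing this strictness, which requires unpacking the basis $(\ref{basis})$. Since $B_1$ has $(2,2)=0$ while $B_2$ has $(2,2)=2c_4\neq 0$, and Gram--Schmidt subtracts a multiple of $B_1$ from $B_2$ to produce $C_2$, the matrix $\tC_2$ has its only nonzero entry at $(2,2)$, equal to $2c_4$. Similarly, since the $(2,3)$ entries of $B_1$ and $B_2$ both vanish, no Gram--Schmidt coefficient touches the $(2,3)$ entry of $C_3$, which is therefore the same $(2,3)=1$ of $B_3$, so $\tC_3$ has nonzero $(2,3)$ entry. Consequently $\tC_2, \tC_3$ are both nonzero (so $R_{11},R_{22}<1$) and not proportional (their $(2,3)$ entries being $0$ and $1$), which gives the strict Cauchy--Schwarz inequality and hence $\|R\|_2<1$.
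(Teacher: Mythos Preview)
Your proof is correct. The first part—rescaling the matrix in (\ref{eq:PEH}) by $\|C_2\|,\|C_3\|$ to obtain the symmetric matrix $R$ and then bounding $N$—is exactly what the paper does.

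For $\|R\|_2<1$ you and the paper diverge. The paper writes down the characteristic polynomial $p(\lambda)$ of $R$ and sandwiches it between two auxiliary quadratics $p_1,p_2$ (differing from $p$ by the constants $\langle\tC_2,\tC_3\rangle^2/\|C_2\|^2\|C_3\|^2$ and $\|\tC_2\|^2\|\tC_3\|^2/\|C_2\|^2\|C_3\|^2$) whose roots are explicit, obtaining that every root of $p$ lies strictly between $1-\|\tC_2\|^2/\|C_2\|^2-\|\tC_3\|^2/\|C_3\|^2$ and $1$. Your approach instead uses the closed $2\times2$ eigenvalue formula and reduces $\lambda_+<1$ directly to the strict Cauchy--Schwarz inequality $\langle\tC_2,\tC_3\rangle^2<\|\tC_2\|^2\|\tC_3\|^2$; the bound $\lambda_->-1$ then follows from $R_{11},R_{22}\in[0,1]$. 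Both routes ultimately rest on the linear independence of $\tC_2,\tC_3$, which you verify explicitly from the structure of the basis (\ref{basis}) and the Gram--Schmidt process, just as the paper invokes it (``by the locations of the nonzero elements in $B_i$''). Your argument is slightly more elementary; the paper's sandwich argument yields the extra information that the eigenvalues are in fact nonnegative, but only $\|R\|_2<1$ is needed here.
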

\begin{proof}
Let $Q = 
\left(
\begin{smallmatrix}
Q_{11} & Q_{21} \\ Q_{21} & Q_{22}
\end{smallmatrix}
\right)
$ be the matrix that appears in the RHS of $(\ref{eq:PEH})$.
 By Lemma $\ref{lemma:pertPE}$, we have
\begin{align*}
  N^2 & = 
\|
(Q_{11}\beta t^7 +
Q_{21}\gamma t^7) C_2
+ (Q_{21}\beta t^7 +
Q_{22}\gamma t^7) C_3
\|_F^2 \\
& = 
(Q_{11}\beta t^7 + Q_{21}\gamma t^7)^2 \|C_2\|_F^2 + (Q_{21}\beta t^7 + Q_{22}\gamma t^7)^2 \|C_3\|_F^2\\
%
= &
\left(\left(1 - \frac{\|\tC_2\|^2}{\|C_2\|^2}\right) \|C_2\|\beta t^7 
- \frac{\langle C_2,\tC_3\rangle}{\|C_2\|\|C_3\|}\|C_3\|\gamma t^7\right)^2\\
& \hspace{3em} + \left(- \frac{\langle \tC_2,\tC_3\rangle}{\|C_2\|\|C_3\|}\|C_2\|\beta t^7
+ \left(1 - \frac{\|\tC_3\|^2}{\|C_3\|^2}\right) \|C_3\|\gamma t^7
\right)^2\\
= & \left\|R
\begin{pmatrix}
 \|C_2\|\beta t^7\\
\|C_3\|\gamma t^7
\end{pmatrix}
\right\|_2^2.
\end{align*}
Then we obtain
\[
 N \leq \|R\|_2\cdot\left\|
\begin{pmatrix}
 \|C_2\|\beta t^7\\
\|C_3\|\gamma t^7
\end{pmatrix}
\right\|_2 = \|R\|_2\cdot\|\beta t^7 C_2 + \gamma t^7 C_3\|_F.
\]
Next we estimate $\|R\|_2$. The characteristic polynomial of $R$ is given by
\begin{align*}
  p(\lambda) = &  
\left(\lambda - \left(1 - \frac{\|\tC_2\|^2}{\|C_2\|^2}\right)\right)
\left(\lambda - \left(1 - \frac{\|\tC_3\|^2}{\|C_3\|^2}\right)\right)
- \left(\frac{\langle \tC_2, \tC_3\rangle}{\|C_2\|\|C_3\|}\right)^2\\
=& 
\lambda^2 - \left(2 - \frac{\|\tC_2\|^2}{\|C_2\|^2} - \frac{\|\tC_3\|^2}{\|C_3\|^2}\right)\lambda
+ 1 - \frac{\|\tC_2\|^2}{\|C_2\|^2} - \frac{\|\tC_3\|^2}{\|C_3\|^2}\\
& \hspace{1em}+ \frac{\|\tC_2\|^2\|\tC_3\|^2}{\|C_2\|^2\|C_3\|^2}
- \left(\frac{\langle \tC_2, \tC_3\rangle}{\|C_2\|\|C_3\|}\right)^2.
\end{align*}
Let
\[
 p_1(\lambda) = p(\lambda) + \left(\frac{\langle \tC_2, \tC_3\rangle}{\|C_2\|\|C_3\|}\right)^2,\quad
p_2(\lambda) = p_1(\lambda) - \frac{\|\tC_2\|^2\|\tC_3\|^2}{\|C_2\|^2\|C_3\|^2}.
\]
Recall that $c_4\neq 0$ and $C_1,C_2,C_3$ form an orthogonal basis obtained by applying the Gram-Schmidt process to $B_1, B_2, B_3$ in this order. By the locations of the nonzero elements in $B_i$, we can easily see that $\tC_2$ and $\tC_3$ are linearly independent and hence $|\langle \tC_2, \tC_3\rangle| < \|C_2\|\|C_3\|$. 
Thus we obtain $p_2(\lambda) < p(\lambda) < p_1(\lambda)$. Here, we have
\begin{align*}
& p_1(\lambda) = 0 \Longleftrightarrow 
\lambda = 1 - \frac{\|\tC_2\|^2}{\|C_2\|^2},\ 1 - \frac{\|\tC_3\|^2}{\|C_3\|^2}\\
& p_2(\lambda) = 0 \Longleftrightarrow 
\lambda = 1,\ 1 - \frac{\|\tC_2\|^2}{\|C_2\|^2} - \frac{\|\tC_3\|^2}{\|C_3\|^2}.
\end{align*}
Since $p(\lambda)$ is a convex quadratic function, each solution $\lambda$ to $p(\lambda)=0$ satisfies
\[
 -1 < 1 - \frac{\|\tB_2\|^2}{\|B_2\|^2} - \frac{\|\tB_3\|^2}{\|B_3\|^2} 
< \lambda < 1.
\]
Therefore $\|R\|_2 < 1$.
\end{proof}

Now we can show the following proposition, which means that
if we choose the initial point  sufficiently close to the curve $G(t)$, then the AP sequence moves in the rate of $\Theta(t^7)$ towards the intersection point 
while remaining in a neighborhood of the curve $G(t)$.

\begin{prop}
\label{prop:curve_nbh}
For each $\epsilon>0$, there exists $\delta,\ K>0$ such that if $t, \beta, \gamma$ satisfy $0 < t < \delta$, 
$\|\beta C_2 + \gamma  C_3\|_F < \epsilon$,
then
there exist $\tt, \tbeta, \tgamma$ such that
\begin{align*}
& \|\tbeta C_2 + \tgamma  C_3\|_F < \epsilon,\\ 
& 0 < t - \frac{1}{4c_4^4\|C_1\|^2}t^7 - Kt^8 \leq \tt 
\leq t - \frac{1}{4c_4^4\|C_1\|^2}t^7 + Kt^8 < \delta,\\
&  P_E\circ P_{\S^3_+}\left(G(t) + \beta t^7 C_2 + \gamma t^7 C_3\right) = 
G(\tt)
+ \tbeta \tt^{\,7} C_2 + \tgamma \tt^{\,7} C_3.
\end{align*} 
\end{prop}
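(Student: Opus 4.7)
First, I would verify that Lemma \ref{lemma:pertPS3+} applies to $(G+H)(t)$, where $H(t) = \beta t^7 C_2 + \gamma t^7 C_3$. Since $B_1, B_2, B_3$ all have zero $(3,3)$-entry, so do $C_2, C_3$, and hence $H(t)$ has zero $(3,3)$-entry with other entries of order $t^7$. Lemma \ref{lemma:det} then gives $\det((G+H)(t)) = t^{10}/(32 c_4^6) + O(t^{11}) > 0$ for $t$ small, so $P_{\S^3_+}((G+H)(t))$ has rank $1$.

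Next, combine the perturbation lemmas with the rational formula: by Lemma \ref{lemma:pertPS3+},
\[
P_{\S^3_+}((G+H)(t)) = P_{\S^3_+}(G(t)) + M + O(t^8),
\]
where $M$ carries $\eta_1$ in the $(1,1)$ position and $\eta_2, \eta_3$ on the first row and column, zeros elsewhere. Applying the affine map $P_E$ and using Theorem \ref{thm:ratPEPS},
\[
P_E\circ P_{\S^3_+}((G+H)(t)) = G(t - ct^7) + \tP_E(M) + O(t^8),\quad c = \tfrac{1}{4 c_4^4 \|C_1\|^2},
\]
and Lemma \ref{lemma:pertPE} rewrites $\tP_E(M) = \beta' t^7 C_2 + \gamma' t^7 C_3$ where $(\beta',\gamma')^T = Q(\beta,\gamma)^T$ for the matrix $Q$ of that lemma.

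Third, I would match coordinates within $E$. Both sides lie in $E$, so the $O(t^8)$ error decomposes as $a_1(t) C_1 + a_2(t) C_2 + a_3(t) C_3$ with each $a_i(t) = O(t^8)$ uniformly over $\|\beta C_2 + \gamma C_3\|_F < \epsilon$ (since every perturbation estimate depends on $\|H(t)\|_F \leq \epsilon t^7$). Writing $G(s) - U_* = f_1(s) C_1 + f_2(s) C_2 + f_3(s) C_3$ with $f_1(s) = s + O(s^2)$ (because $C_1 = B_1$) and $f_2, f_3 = O(s^2)$, I equate with $G(\tt) + \tbeta \tt^7 C_2 + \tgamma \tt^7 C_3$. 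The $C_1$-equation $f_1(\tt) = f_1(t - ct^7) + a_1(t)$ is solved via the inverse function theorem for $f_1$ near $0$, giving $\tt = t - ct^7 + O(t^8)$ with constant uniform in $(\beta,\gamma)$, which supplies the range bound for an appropriate $K$. The $C_2$- and $C_3$-equations read $\tbeta \tt^7 = \beta' t^7 + (f_2(t-ct^7) - f_2(\tt)) + a_2(t)$ and its analogue; since $f_2'(s) = O(s)$ and $\tt - (t-ct^7) = O(t^8)$ the middle term is $O(t^9)$, and $\tt^7 = t^7(1 + O(t^6))$ yields $\tbeta = \beta' + O(t)$ and $\tgamma = \gamma' + O(t)$.

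Finally, I would apply the contraction. Lemma \ref{lemma:pertR} gives $\|\beta' C_2 + \gamma' C_3\|_F \leq \|R\|_2 \|\beta C_2 + \gamma C_3\|_F < \|R\|_2 \epsilon$, hence $\|\tbeta C_2 + \tgamma C_3\|_F \leq \|R\|_2 \epsilon + O(t)$. Choosing $\delta$ small enough that $\|R\|_2 \epsilon + O(\delta) < \epsilon$ (possible because $\|R\|_2 < 1$) and that $[t - ct^7 - Kt^8,\, t - ct^7 + Kt^8] \subset (0,\delta)$ for all $t \in (0,\delta)$, all three conclusions of the proposition hold. I expect the main work to lie in tracking uniformity of the $O(t^8)$ constants across the $\epsilon$-ball; once uniformity is secured, the inverse function theorem step for $f_1$ and the contraction via $\|R\|_2 < 1$ go through in a straightforward manner.
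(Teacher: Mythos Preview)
Your proposal is correct and follows essentially the same route as the paper: verify rank~$1$ via Lemma~\ref{lemma:det}, combine Lemma~\ref{lemma:pertPS3+}, Theorem~\ref{thm:ratPEPS}, and Lemma~\ref{lemma:pertPE} to write $P_E\circ P_{\S^3_+}((G+H)(t)) = G(t-ct^7) + \beta' t^7 C_2 + \gamma' t^7 C_3 + O(t^8)$, decompose the $O(t^8)$ remainder in the $C$-basis with uniform constants, and close with the contraction $\|R\|_2<1$ from Lemma~\ref{lemma:pertR}. The only notable difference is that you define $\tt$ by inverting $f_1$ via the inverse function theorem, whereas the paper sets $\tt=\tp_1(t)$ directly and controls the remainder through an explicit Taylor expansion with the $8$th-derivative Lagrange term; your inversion step is in fact slightly cleaner, since the exact identity $P_E\circ P_{\S^3_+}((G+H)(t)) = G(\tt) + \tbeta\tt^7 C_2 + \tgamma\tt^7 C_3$ requires matching the $C_1$-coordinate $p_1(\tt)=\tp_1(t)$ rather than $\tt=\tp_1(t)$.
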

\begin{proof}
Let $c = 1/4c_4^4\|C_1\|^2$ and $R$ be the matrix in Lemma $\ref{lemma:pertR}$. 
Note that Lemma $\ref{lemma:det}$ ensures $P_{\S^3_+}(G(t))$ has rank $1$ for sufficiently small $t>0$. Thus $P_{\S^3_+}(G(t))$ is calculated with the first eigenvalue and the associated eigenvector of $G(t)$.
By Lemma $\ref{lemma:pertPS3+}$ and Lemma $\ref{lemma:pertPE}$, we have
\begin{align*}
\tG(t,\beta,\gamma) & := P_E\circ P_{\S^3_+}(G(t) + \beta t^7 C_2 + \gamma t^7 C_3)  =  P_E\circ P_{\S^3_+}((G+H)(t))\\
& = G(t - ct^7)
+ 
\begin{pmatrix}
 C_2 & C_3
\end{pmatrix}
R
\begin{pmatrix}
 \beta t^7\\
 \gamma t^7
\end{pmatrix}
 + O(t^8).
\end{align*}
Here
$\begin{pmatrix}
 C_2 & C_3
\end{pmatrix}
R
\begin{pmatrix}
 \beta \\
 \gamma 
\end{pmatrix} = \beta' C_2 + \gamma' C_3$ for some $\beta', \gamma'\in \R$.
Since Lemma $\ref{lemma:pertR}$ implies that
$\left\|
\begin{pmatrix}
 C_2 & C_3
\end{pmatrix}
R
\begin{pmatrix}
 \beta \\
 \gamma 
\end{pmatrix}
\right\|_F \leq \|R\|_2\|\beta C_2 + \gamma C_3\|_F$ and $\|R\|_2<1$,
we have
 \begin{align}
& \|\beta C_2 + \gamma  C_3\|_F < \epsilon\notag \\
& \Longrightarrow \exists \beta', \gamma' \text{ s.t } 
\|\beta' C_2 + \gamma' C_3\|_F < \|R\|_2\cdot\epsilon,\notag\\ 
& \phantom{\Longrightarrow}  \tG(t)  = 
G\left(t- c t^7\right)
+ t^7 \beta' C_2 +  t^7\gamma' C_3 + O(t^8).\label{eq:approx_proj}
\end{align} 
For 
$U_* = 
\left(
\begin{smallmatrix}
1 & 0 & 0 \\
0 & 0 & 0 \\
0 & 0 & 0 
\end{smallmatrix}
\right)
$, 
let $\phi_C(p) := U_* + p_1C_1 + p_2 C_2 + p_3 C_3$.
Then $\phi_C(p)$ is a diffeomorphism between $\R^3$ to $E$.
Define
$p(t) := \phi_C^{-1}(G(t))$, $\tp(t,\beta,\gamma) := \phi_C^{-1}(\tG(t))$.
Since the first eigenvalue is simple, we see that the eigenvalue and the associated eigenvectors of $G(t) + \beta t^7 C_2 + \gamma t^7 C_3$ are analytic in $(t,\beta,\gamma)$ and hence 
$\tG(t,\beta,\gamma) = U_* + \tp_1(t,\beta,\gamma)C_1 + \tp_2(t,\beta,\gamma) C_2 + \tp_3(t,\beta,\gamma) C_3$ is also analytic. Thus, by the Taylor expansion with respect to $t$ about $0$, we can actually rewrite $(\ref{eq:approx_proj})$ as
\begin{align}
  \tG(t) & = G(t - ct^7) + t^7 \beta' C_2 +  t^7\gamma' C_3 
+ t^8 (r_1C_1 + r_2 C_2 + r_3 C_3),\notag\\
& = U_* + \left(p_1(t - ct^7) + r_1t^8\right)C_1 
+ \left(p_2(t - ct^7) + \beta't^7 + r_2t^8 \right)C_2\notag\\
& \phantom{= U_* + \left(p_1(t - ct^7) + r_1t^8\right)C_1 
+ } + \left(p_3(t - ct^7) + \gamma't^7 + r_3t^8 \right)C_3
\label{eq:Gtilde}
\end{align}
where 
\[
 r_i(t,\beta,\gamma) = \frac{1}{8!}\frac{\partial^8 \tp_i}{\partial t^8}(\theta_i t, \beta,\gamma),
\]
for some $\theta_i \in (0,1),\ i = 1,2,3$. Let $r = (r_1,r_2,r_3)$ and
\[
 D_\delta = \{(t,\beta,\gamma)\in \R^3 :0 < t < \delta,\ \|\beta C_2 + \gamma C_3 \|_F < \epsilon \}.
\]
Now we have
\begin{align*}
&  \sup\{\|r(t,\beta,\gamma)\|:(t,\beta,\gamma)\in D_\delta\} \\
& \leq \sup\left\{\left(\sum_{i=1}^3\left|\frac{1}{8!}\frac{\partial^8 \tp_i}{\partial t^8}(t,\beta,\gamma)\right|^2\right)^\frac{1}{2}:(t,\beta,\gamma)\in D_\delta\right\}=: K_\delta.
\end{align*}
First, we show that there exists $\tt$ such that $p_1(t-ct^7) + r_1t^8 = p_1(\tt)$ and $|\tt - (t - ct^7)| \leq 2K_\delta t^8$.
Since $G(t) = U_* + t B_1 + g_2(t) B_2 + g_3(t) B_3 = U_* + p_1(t) C_1 + p_2(t) C_2 + p_3(t) C_3$ and $B_1 = C_1$, we see that
$ p_1(t) = t + \frac{\langle C_1,B_2\rangle}{\|C_1\|^2}g_2(t) 
+ \frac{\langle C_1,B_3\rangle}{\|C_1\|^2}g_3(t) $
and hence $p_1'(0) = 1$. 
By taking $\delta$ smaller if necessary, we may assume that for $0< t < \delta$, $p_1'(t - ct^7) > 1/2$
and that $p_1(t - ct^7) + r_1 t^8$ is in the range of the inverse function $g$ of $p_1$ around $t - ct^7$. Then the Taylor's theorem implies that 
\begin{align*}
 \tt & := g\left(p_1(t - ct^7) + r_1 t^8\right)\\
& = g\left(p_1(t - ct^7)\right) + g'\left(p_1(t - ct^7)\right)r_1t^8
+ \frac{1}{2}g''(t - ct^7 + \theta r_1t^8)(r_1t^8)^2,\\
& = t - ct^7 + \left(g'\left(p_1(t - ct^7)\right)
+ \frac{1}{2}g''(t - ct^7 + \theta r_1t^8)r_1t^8\right)r_1t^8,
\end{align*}
for some $\theta\in (0,1)$.
Since $g'\left(p_1(t - ct^7)\right)< 2$, we have $|t - ct^7 - \tt| \leq 2 K_\delta t^8$.
Next, the equation $(\ref{eq:Gtilde})$ and $\|\beta'C_2 + \gamma' C_3\|_F < \|R\|_2 \epsilon$ imply that
\begin{align*}
& \|\tp_2(t)C_2 + \tp_3(t)C_3 - (p_2(t - ct^7)C_2 + p_3(t - ct^7)C_3)\|_F \\
& = \|t^7 \beta' C_2 +  t^7\gamma' C_3 
+ t^8 (r_1C_1 + r_2 C_2 + r_3 C_3)\|_F
\leq \|R\|_2\epsilon t^7  + \max_{i\in [3]}\|C_i\|_F K_\delta t^8.
\end{align*}
Let $g(t) = (t, g_2(t), g_3(t))$ be the functions defining the slowest curve $(\ref{eq:slowest})$. 
Recall that $g_2(t) = O(t^2),\ g_3(t) = O(t^3)$.
Since $G(t) = U_* + t B_1 + g_2(t) B_2 + g_3(t) B_3 = U_* + p_1(t) C_1 + p_2(t) C_2 + p_3(t) C_3$ and $B_1 = C_1$, we see that 
$g_2(t)\langle B_2,C_2\rangle 
+ g_3(t)\langle B_3,C_2\rangle= p_2(t)\|C_2\|^2$ and hence $p_2(t) = O(t^2)$. Similarly, $p_3(t) = O(t^2)$.
Thus we have
\begin{align*}
&   \left\|\tp_2(t)C_2 + \tp_3(t)C_3 - (p_2(\tt)C_2 + p_3(\tt)C_3)\right\|_F\\
& \leq \left\|\tp_2(t)C_2 + \tp_3(t)C_3 - (p_2(t - ct^7)C_2 + p_3(t - ct^7)C_3)\right\|_F\\
& \hspace{5em} + \left\|p_2(t - ct^7)C_2 + p_3(t - ct^7)C_3 - (p_2(\tt)C_2 + p_3(\tt)C_3)\right\|_F\\
& \leq \|R\|_2\epsilon t^7 + \max_{i\in [3]}\|C_i\|_F(K_\delta + K'_\delta) t^8,
\end{align*}
for some $K'_\delta = O(\delta)$.
Since $\|R\|_2 <1$, by taking $\delta$ smaller if necessary, we obtain
\begin{align*}
&  0 < t - c t^7 - 2K_\delta t^8 \leq \tt
\leq t - c t^7 + 2K_\delta t^8 < \delta,\\
&  \frac{1}{\tt^7} \left\|(\tp_2(t)C_2 + \tp_3(t)C_3) - (p_2(\tt)C_2 + p_3(\tt)C_3)\right\|_F
< \epsilon,
\end{align*}
for all $(t,\beta,\gamma) \in D_\delta$.
The second inequality implies that there exist $\tbeta,\tgamma$ such that
\begin{align*}
&   \|\tbeta C_2 + \tgamma C_3\|_F < \epsilon,\\
& \tp_2(t)C_2 + \tp_3(t)C_3 = 
p_2(\tt)C_2 + p_3(\tt)C_3 + 
\tbeta \tt^7 C_2 + \tgamma \tt^7 C_3.
\end{align*}
Therefore we have
\begin{align*}
  \tG(t) & = U_* + p_1(\tt) C_1 + p_2(\tt)C_2 + p_3(\tt)C_3 + 
\tbeta \tt^7 C_2 + \tgamma \tt^7 C_3\\
& = G(\tt) + \tbeta \tt^7 C_2 + \tgamma \tt^7 C_3.
\end{align*}

\end{proof}


We use the following lemma on a recursive sequence.
\begin{lemma}
\label{lemma:basic}
 Suppose that the sequence $\{x_k\}$ satisfies $(q+1)C - (q+2)Kx_0>0$ and 
\[
0 < x_{k-1}(1 - C x_{k-1}^q - K x_{k-1}^{q+1})
\leq x_k \leq 
x_{k-1}(1 - C x_{k-1}^q + K x_{k-1}^{q+1})\quad (k = 1,2,\ldots)
\]
for some $C, K >0,\ q\in \N$. Then
\[
 \lim_{k\to \infty} (qC)^{\frac{1}{q}}k^{\frac{1}{q}}x_k = 1.
\]
\end{lemma}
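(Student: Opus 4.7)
The plan is to substitute $y_k = x_k^{-q}$ and show that $y_k - y_{k-1} \to qC$, so that by the Stolz--Ces\`aro theorem $y_k/k \to qC$, equivalently $x_k \sim (qCk)^{-1/q}$, which gives the stated limit. First I would verify that $\{x_k\}$ is strictly positive, strictly decreasing, and converges to $0$. Positivity of each $x_k$ is built into the left hypothesis $0 < x_{k-1}(1 - Cx_{k-1}^q - Kx_{k-1}^{q+1})$. For strict decrease, the upper bound gives $x_k \leq x_{k-1}\bigl(1 - x_{k-1}^q(C - Kx_{k-1})\bigr)$, and the assumption $(q+1)C > (q+2)Kx_0$ implies in particular $Kx_0 < C$, so $x_1 < x_0$; since the sequence is decreasing, the assumption is preserved at each step and $x_k < x_{k-1}$ for every $k$. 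A decreasing positive sequence has a limit $L \geq 0$; if $L > 0$ the upper bound would force $CL^q \leq KL^{q+1}$, contradicting $C > KL$, so $L = 0$.

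Next, I would write $x_k = x_{k-1}(1 - Cx_{k-1}^q + \epsilon_k)$ with $|\epsilon_k| \leq K x_{k-1}^{q+1}$. A binomial expansion gives
\[
x_k^q = x_{k-1}^q\bigl(1 - qCx_{k-1}^q + q\epsilon_k + O(x_{k-1}^{2q})\bigr),
\]
with implicit constants depending only on $q$ and $x_0$. Dividing by $x_k^q\, x_{k-1}^q$ and using $x_k^q = x_{k-1}^q(1 + O(x_{k-1}^q))$ in the denominator, I obtain
\[
y_k - y_{k-1} = \frac{x_{k-1}^q - x_k^q}{x_k^q\, x_{k-1}^q} = qC + r_k,
\]
where $|r_k| \leq qK x_{k-1} + O(x_{k-1}^q)$. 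Since $x_{k-1} \to 0$, we have $r_k \to 0$. Applying Stolz--Ces\`aro to $a_k = y_k$ and $b_k = k$ (noting that $b_k$ is strictly increasing and unbounded, and $a_k - a_{k-1} \to qC$) then yields $y_k/k \to qC$, i.e.\ $x_k^q \sim 1/(qCk)$, and taking $q$-th roots gives $(qC)^{1/q} k^{1/q} x_k \to 1$.

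The main obstacle is the careful bookkeeping of the error in the expansion of $y_k - y_{k-1}$: one must verify that $r_k$ is genuinely $O(x_{k-1})$ with a constant independent of $k$, since this is what makes the Ces\`aro averaging collapse the noise to zero in the limit. The precise form of the hypothesis $(q+1)C > (q+2)Kx_0$ appears to be tuned so that both positivity of the lower-bound factor $1 - Cx_{k-1}^q - Kx_{k-1}^{q+1}$ and strict monotonicity $x_k < x_{k-1}$ propagate along the entire orbit, validating the expansions at every step; once these are secured, the remainder of the argument is routine.
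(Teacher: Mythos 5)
Your proof is correct and follows essentially the same route as the paper: the paper likewise first shows the sequence is positive, strictly decreasing, and tends to $0$ (via a contradiction with $\inf_k x_k>0$ using the monotonicity of $-Cx^{q+1}+Kx^{q+2}$ on $(0,(q+1)C/((q+2)K))$, which is exactly where the hypothesis $(q+1)C-(q+2)Kx_0>0$ enters), and then obtains the asymptotics by the standard Stolz--Ces\`aro argument on $x_k^{-q}$, which it delegates to the proof of \cite[Lemma 3.1]{OSW}. Your write-up simply makes that last step self-contained.
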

\begin{proof}
We show $x_k\to 0$. Suppose $\alpha:=\inf_{k} x_k>0$. 
Let $f(x) = -Cx^{q+1} + Kx^{q+2}$ and $M=(q+1)C/((q+2)K)$.
Then $f'(x) = -(q+1)Cx^{q} + (q+2)Kx^{q+1} = (-(q+1)C + (q+2)Kx)x^{q} < 0$ for $0<x<M$.
Since $0 < x_0 < M < C/K$, we see $x_1 \leq x_0 + f(x_0) < x_0 $, and hence we obtain inductively that
$x_k \leq x_{k-1}  + f(x_{k-1}) < x_{k-1}$ for all $k$. 
Since $f(x)$ is decreasing for $0 < x < M$, 
we see $\alpha \leq x_k \leq x_{k-1} + f(x) < x_{k-1} - C\alpha^{q+1} + K \alpha^{q+2}$.
Thus we have $\alpha < \alpha + C\alpha^{q+1} - K\alpha^{q+2} \leq x_{k-1}$ for all $k$. Therefore, a contradiction occurs and hence $\inf_k x_k = 0$. Since $x_k$ is decreasing, we obtain $x_k \to 0$. Then almost identical arguments in 
 the proof of \cite[Lemma 3.1]{OSW} gives the result.
\end{proof}
\begin{proof}
[Proof of Theorem $\ref{thm:rate}$]
Let $\epsilon>0$.
By Lemma $\ref{lemma:det}$, for each $\beta,\gamma$ with $\|\beta C_2 + \gamma C_3\|_F < \epsilon$, there exists $\delta'>0$ such that for $t\in [0,\delta')$, 
 $\det\left(G(t) + \beta t^7C_2 + \gamma t^7C_3\right)>0$ and hence $P_{\S^3_+}\left(G(t) + \beta t^7C_2 + \gamma t^7C_3\right)$ has rank $1$.
%
Let $c := \frac{1}{4c_4^4\|C_1\|^2} = \frac{1}{4c_4^4(4c_1^2 + 2)}$.
By iteratively applying Proposition $\ref{prop:curve_nbh}$,
there exists $\delta>0$ such that for $\beta_0=\gamma_0 = 0$ and $t_0$ 
with $0<t_0<\delta$, we can construct 
$(t_k, \beta_k, \gamma_k),\ k=1,2,\ldots$, satisfying
\begin{align*}
& \|\beta_k C_2 + \gamma_k  C_3\|_F < \epsilon,\\ 
& 0 < t_{k-1} - c t_{k-1}^7 - Kt_{k-1}^8 \leq t_k
\leq t_{k-1} - c t_{k-1}^7 + Kt_{k-1}^8 < \delta,\\
&  P_E\circ P_{\S^3_+}\left(G(t_{k-1}) + \beta_{k-1} t_{k-1}^7 C_2 + \gamma_{k-1} t_{k-1}^7 C_3\right) = 
G(t_k)
+ \beta_k t_k^{\,7} C_2 + \gamma_k t_k^{\,7} C_3,
\end{align*} 
for some $K>0$.
Then Lemma $\ref{lemma:basic}$ implies $\lim_{k\to \infty}(6c)^{\frac{1}{6}}k^{\frac{1}{6}}t_k = 1$.
Since $\|U_k - U_*\| = (4c_1^2 + 2)^{\frac{1}{2}}t_k + O(t_k^2)$, we obtain
\begin{align*}
 1  = \lim_{k\to \infty}(6c)^{\frac{1}{6}}k^{\frac{1}{6}}t_k
& = \lim_{k\to \infty}(6c)^{\frac{1}{6}}k^{\frac{1}{6}}\left((4c_1^2 + 2)^{-\frac{1}{2}}\|U_k - U_*\| + O(t_k^2)\right) \\
& = \lim_{k\to \infty}\left(\frac{3}{32c_4^4(2c_1^2 + 1)^4}\right)^{\frac{1}{6}}k^{\frac{1}{6}}\|U_k - U_*\|.
\end{align*}
\end{proof}

\noindent\textbf{Numerical experiments.}
Figure $\ref{fig:rate}$ is consistent with our claim that the convergence rate of $\|U_k - U_*\|$ is $\Theta(k^{-1/6})$ in the case that $c_1=c_4=1, c_2=c_3=c_5=0$ as in Example $\ref{ex:moment}$ and the initial point is taken from the slowest curve.
We observe from the right of Figure $\ref{fig:rate}$ that the plot of 
$1/\|U_k - U_*\|^6$ approximately coincides with the line $196.0+0.0098k$.
Hence $\|U_k - U_*\|\approx (196.0+0.0098k)^{-1/6}\approx 2.16k^{-1/6}$ for sufficiently large $k$.
The estimate in Theorem $\ref{thm:rate}$ gives 
$\|U_k - U_*\| \approx (32\cdot 27)^{1/6}k^{-1/6} \approx 3.08k^{-1/6}$.
This discrepancy between the coefficient given in Theorem $\ref{thm:rate}$ and the results of the numerical experiments is likely due to the slow convergence of the limit in the estimate.

\begin{figure}[ht]
\begin{tabular}{cc}
        \begin{minipage}{.45\textwidth}
            \centering
\begin{tikzpicture}[scale=0.7]
\begin{axis}[grid=major, xlabel={$k$},legend entries={$\sqrt{k}\|U_{k}-U_*\|$, $\sqrt[6]{k}\|U_k-U_*\|$}, legend style={at={(axis cs:52000,40)}}]
\addplot[blue] table [x=k, y=s2kn] {data61.dat};
\addplot[red] table [x=k, y=s6kn] {data61.dat};
\end{axis}
\end{tikzpicture}
        \end{minipage}
        \begin{minipage}{.45\textwidth}
\centering
\begin{tikzpicture}[scale=0.7]
\begin{axis}[grid=major, xlabel={$k$}, legend entries={$\|U_k-U_*\|^{-6}$, $196.0 + 9.807\times 10^{-3}k$}, legend style={at={(axis cs:50000,150)}}]
\addplot[red] table [x=k, y=1norm6,] {data61.dat};
\addplot[blue, very thick, dotted] table [x=k, y=line1] {data61.dat};
\end{axis}
\end{tikzpicture}
\label{figure:fig61-2}
        \end{minipage}
    \end{tabular}
\caption{The left figure displays the plots of $\sqrt{k}\|U_{k}-U_*\|$ and $\sqrt[6]{k}\|U_k-U_*\|$  in Example $\ref{ex:moment}$ with the initial point on the slowest curve, and the right figure displays the plots of $\|U_k - U_*\|^{-6}$ and the line fitting.}
\label{fig:rate}
\end{figure}

\section{Conclusion}
In this paper, we derived three new analytic formulas for
sequences constructed by the alternating projection method applied to an affine space and the cone of positive semidefinite matrices.
In particular, using the first formula, we presented examples that demonstrate gaps between the actual convergence rates and the upper bounds based on singularity degrees.
The second formula was used to construct the slowest curve for a concrete instance of a $3$-plane. The generalization of the slowest curve
for the parametric family of $3$-planes gives rise to the third formula. 
The third formula was applied to show the tightness of the convergence rate of the alternating projection method when applied to a $3$-plane and $\S^3_+$
whose intersection is a singleton and has singularity degree $2$.

We formulate our results in this paper only for cases where the intersection is a singleton, for simplicity of the argument. However, under certain conditions, the argument can be extended to the non-singleton intersection case, which will be the subject of future study.

\section{Acknowledgments}
The first author was supported by JSPS KAKENHI Grant Number JP17K18726 
and JSPS Grant-in-Aid for Transformative Research Areas (A) (22H05107). 
The second author was supported by JSPS KAKENHI Grant Number JP19K03631 and JP24K06841. 
The third author was supported by JSPS KAKENHI Grant Number JP20K11696, JP24K14843 and ERATO HASUO Metamathematics for Systems Design Project (No.JPMJER1603), JST.

\end{document}